\author[1]{Katherine Orme\~no Bast\'ias\thanks{supported in part by ANID beca
    doctorado nacional 21202166 and EPSRC grant EP/W007509/1}}
\author[2]{Paul Martin\thanks{supported in part by EPSRC grant EP/W007509/1}}
\author[3]{Steen Ryom-Hansen\thanks{supported in part by FONDECYT grant 1221112
and EPSRC grant EP/W007509/1}}
\affil[1]{Instituto de Matem\'aticas, Universidad de Talca, Chile}
 \affil[2]{School of Mathematics, Leeds University, UK}
  \affil[3]{Instituto de Matem\'aticas, Universidad de Talca, Chile }
\let\OLDthebibliography\thebibliography
\renewcommand\thebibliography[1]{
  \OLDthebibliography{#1}
  \setlength{\parskip}{0pt}
  \setlength{\itemsep}{0pt plus 0.3ex}
}
\titleformat{\section} {\normalfont\scshape \large \centering}{ \thesection}{1em}{}
\definecolor{morado}{rgb}{0.5,0,0.5}
\newcommand{\coeff}{{\rm coeff} }
\newcommand{\rad}{{\rm rad} }
\newcommand{\PP}{\Lambda }
\newcommand{\QQ}{\mathbb Q}
\newcommand{\ord}{{\rm{ord}}}
\newcommand{\Ord}{{{ord}}}
\newcommand{\Sign}{{\rm{sign}}}
\newcommand{\II}{\mathcal{I}}
\newcommand{\shape}{\text{shape}}
\newcommand{\norm}{\text{N}}
\newcommand{\normC}{{ \bf N}}
\newcommand{\ParAlg}{{\mathcal P }_{k}}
\newcommand{\SpheAlg}{{\mathcal{SP} }_{k}}
\newcommand{\SpheAlgThree}{{\mathcal{SP} }_{3}}
\newcommand{\AntiSpheAlg}{{\mathcal{ASP} }_{k}}
\newcommand{\N}{ { \mathbb N}}
\newcommand{\Z}{\mathbb{Z}}
\newcommand{\Par}{{\rm Par}   }
\newcommand{\Comp}{{\rm Comp}   }
\newcommand{\ParSph}{  \Par_{sph}^{k, n}}
\newcommand{\ParPar}{  \Par^{k, n}_{par}}
\newcommand{\ParSphThreeSix}{  \Par_{sph}^{3, 6}}
\newcommand{\CC}{ \mathbb C }
\newcommand{\End}{{\rm End}}
\newcommand{\Hom}{{\rm Hom}}
\newcommand{\Ext}{{\rm Ext}}
\newcommand{\spa}{{\rm span}}
\newcommand{\s}{\mathfrak{s}}
\newcommand{\cc}{\mathfrak{c}}
\newcommand{\dd}{\mathfrak{d}}
\newcommand{\V}{\mathfrak{v}}
\newcommand{\T}{  \mathfrak{t}}
\newcommand{\U}{  \mathfrak{u}}
\newcommand{\ch}{{\rm{ch}}}
\newcommand{\Si}{\mathfrak{S}}
\newcommand{\std}{{\rm Std}}
\newcommand{\sstd}{{\rm SStd}}
\newcommand{\tab}{{\rm Tab}}
\newcommand{\rank }{{ \rm rk }}
\newcommand{\Std}{{\rm Std}}
\newcommand{\BiPar}{ \operatorname{BiPar}}
\newcommand{\SetPar}{ \operatorname{SetPar}}
\newcommand{\ppm}[1]{\textcolor{black}{#1}}
\newcommand{\ppmG}[1]{\textcolor{black}{#1}}
\newtheorem{theorem}{Theorem}
\newtheorem{lemma}[theorem]{Lemma}
\newtheorem{definition}[theorem]{Definition}
\newtheorem{corollary}[theorem]{Corollary}
\newtheorem{example}{Example}
\newtheorem{remark}{Remark}
\newenvironment{dem}{\noindent \textit{Proof:} }{\quad \hfill $\square$}
\numberwithin{equation}{section}
\begin{document}
\Yvcentermath1
\sidecaptionvpos{figure}{lc}

\title{On the spherical partition algebra }

\date{\vspace{-5ex}}
\maketitle
\begin{abstract}
  {\color{black}{
      For $ k \in \mathbb{N} $ we introduce an idempotent subalgebra, the spherical partition algebra ${\mathcal{SP} }_{k}$, 
      of the partition algebra $ {\mathcal{P} }_{k} $, 
      that we define using an embedding associated with
      the trivial representation of the symmetric group $ \Si_k $.
We determine a basis for ${\mathcal{SP}} _{k}$, and this
provides a combinatorial interpretation of the dimension
\ppm{of} 
${\mathcal{SP}} _{k}$ involving
bipartite partitions of $k$.}}
For $ t \in \mathbb{C} $ we consider the specialized algebra $\mathcal{SP}_{k}(t)$. 
For $ t = n \in \mathbb{N}$, we describe the structure of $\mathcal{SP}_{k}(n)$
by giving the permutation module decomposition of the $k^{\color{black} \rm th}\, $symmetric power of
the defining module for the symmetric group algebra $ \mathbb{C} \mathfrak{S}_n $.
In general, we show that $\mathcal{SP}_{k}(t)$
  is quasi-hereditary over $ \mathbb{C}$ for all $ t \in \mathcal{C}$, except $ t=0$. 
  We determine the decomposition numbers for $\mathcal{SP}_{k}(t)$
  for every specialization $ t \in \mathbb{C} $
except $ t= 0 $, 
(which includes semisimple and non-semisimple cases). In particular we determine the structure of all indecomposable projective
modules{\color{black}{,}} and the indecomposable tilting modules.
\end{abstract}

\section{Introduction}
The {\it partition algebra} $ \ParAlg  $ arose around thirty years ago 
in the second named author's work on the Potts model in statistical mechanics, 
see \cite{PMartin}. 
Since then it has been understood that $ \ParAlg $ is in fact connected 
with many other areas of mathematics
and physics, including Deligne's category $ \underline{\rm Rep}(S_t) $, 
the Kronecker problem in the representation theory of 
the symmetric group, Schur algebras, and symmetric function theory, see for example
\cite{BOdV}, \cite{CO}, \cite{MartinWoodcock}, \cite{OrellanaZabrocki}.

\medskip
By definition, $ \ParAlg $ is a $\CC[x]$-algebra with basis
indexed by the 
set partitions on $  \{ 1,2 \ldots, k \} \cup 
\{ 1^{\prime},2^{\prime} \ldots, k^{\prime} \}$.
{\color{black}{There are many important 
subalgebras of the partition algebra, including
the half-integer partition algebra}}, the quasi-partition algebra,
the Temperley-Lieb algebra, the Motzkin algebra, 
the Brauer algebra, the quasi-Brauer algebra, the Rook algebra, the group algebra of
the symmetric group $\Si_k $, and so on,
see for example \cite{CST}, \cite{OrellanaDaugherty}, \cite{Scrimshaw} and references therein,
and it is also closely related to the $bt$-algebra of knot theory, see 
\cite{AicardiJuyumaya}, \cite{ArcisEspinoza}, \cite{Banjo}, \cite{ERH}, \cite{RH1}, \cite{RH2}. 
  
\medskip
In the present paper we introduce and study yet another subalgebra
of $ \ParAlg $, that we call the {\it spherical partition algebra} $ \SpheAlg  $.
By definition, $ \SpheAlg  $ is the $ \CC[x] $-algebra given by idempotent truncation of $ \ParAlg$, as follows
\begin{equation}
 \SpheAlg = e_k \ParAlg e_k 
\end{equation}  
where $  e_k = \iota_k\left ( \frac{1}{k! } \sum_{ \sigma \in \Si_k } \sigma\right)  $
and $ \iota_{\color{black}{k}}: \CC \Si_k \rightarrow  \ParAlg$ is the inclusion map, and so it may be seen as the
partition algebra analogue of the {\it spherical Cherednik algebra}, 
considered for example in \cite{Rouquier}. For any $t \in \CC$, 
there is a specialization map 
$ x \mapsto t $ for $ \SpheAlg $ and we denote by $  \SpheAlg(t) $ the corresponding 
specialized algebra. 
We show that the $  \SpheAlg(t) $'s, for $ t $ running over $ \CC$, are algebras of fundamental
interest in the representation theory of diagram algebras, and even beyond that.  

\medskip
A first main result of our paper, given in Theorem \ref{lemma2}, 
is the determination
of the rank $  \rank_{\CC[x]} \,  \SpheAlg $ of $ \SpheAlg $. We find 
that $  \rank_{\CC[x]} \,  \SpheAlg = bp_k $ where $ bp_k $ is the
cardinality of {\it bipartite partitions} $ \BiPar_k $ of $k$.
Bipartite partitions are classical combinatorial objects whose 
history goes back more than a century to the work of Macmahon and others, see for example \cite{MacMahon}.
The sequence $ (bp_0, bp_1, bp_2, bp_3,  bp_4, bp_5, \ldots ) = (1, 2, 9, 31, 109, 339, \ldots ) $
is A002774 in the On-Line Encyclopedia of Integer Sequences.
%{\color{black}{\sout{see \url{https://oeis.org/A002774}.}}}

\medskip
Let $ V {\color{black}{=V_n}} $ be a vector space of dimension $n$. 
Then the second named author and V. Jones
independently proved that $ \ParAlg(n)$ is in Schur-Weyl duality with the group algebra
$ \CC \Si_n $, acting 
diagonally on $ V_n^{\otimes k } $, see \cite{Jones}, \cite{PMartin}. 
This is an important {\it double centralizer property} that allows us to pass
representation{\color{black}{-}}theoretic information back and forth between the 
module categories for $ \CC \Si_n $ and $ \ParAlg(n)$. 

\medskip
A main motivation for our work is to establish an analogous
{\it double centralizer property} involving $ \SpheAlg(n) $ and $ \CC \Si_n $, but this time with
commuting actions 
on the symmetric power space $ S^k V_n $, and to study some of its consequences.
We achieve this goal in section \ref{Schur-Weyl duality}
of our paper, culminating in our Theorems \ref{teorem 6} and \ref{joining the results}. 
We obtain an isomorphism of $ (\CC \Si_n, \SpheAlg(n)) $-bimodules
\begin{equation}\label{symmetricpower}
S^k V_n \cong \bigoplus_{ \lambda \in  \ParSph} S(\lambda) \otimes G_k(\lambda) 
\end{equation}
where $ S(\lambda) $ is the Specht module for $ \CC \Si_n $ and $ G_k(\lambda)  $ is 
a simple module for $ \SpheAlg(n) $, 
for $ \ParSph \subseteq \Par_k $ 
a concretely defined subset of the set of partitions of $ k $, see \eqref{defineLambaSph}. 
Moreover, for $ \lambda \in \ParSph $
we obtain an explicit dimension formula
\begin{equation}\label{keyingredient}
 \dim G_k(\lambda) =
    \sum_{ \nu \in \Par_k^{\le n} } {  K_{\lambda,  \Phi(\nu)} }
\end{equation}
where $  K_{\lambda,  \Phi(\nu)} $ is the Kostka number, and $ \Phi:  \Par_k^{\le n} \rightarrow \Par_n $
is a \lq multiplicity\rq\ function defined on the partitions of $ k $ of length less than $ n $.
A key ingredient in the proof of \eqref{keyingredient} is a direct sum decomposition of $  S^k V_n $
in terms of {\it permutation modules} for $ \CC \Si_n $. A related version of this decomposition
was obtained by Harman in \cite{Harman}, 
but for the reader's convenience we provide its (simple) proof 
in Theorem \ref{firstthm}.

\medskip
%{\color{black}{\sout{Considering small value of $ n$}}}
It is clear, {\color{black}{however,}} that the $ G_k(\lambda) $'s do not exhaust
all the simple $ \SpheAlg(n)$-modules, 
and therefore we embark on 
a systematic study of the representation theory of $ \SpheAlg(t)$,
for all specializations $ x \mapsto t \in \CC $ (except $ t=0 $ that we sometimes omit for
brevity of the presentation). We find that
$ \SpheAlg(t)$ is semisimple if $ t \not\in \{0,1,2,\ldots, 2k-2 \} $
but not if $ t \in \{1,2,\ldots, 2k-2 \} $
(although 
$ S^k V_n $ is always a semisimple $ \SpheAlg(n)$-module, as can be read off from 
\eqref{symmetricpower}).

\medskip
A main ingredient in our study is  
the fact, shown for example by \ppmG{K\"onig},
Xi and Doran-Wales, that $ \ParAlg$ is 
a {\it cellular algebra} in the sense of Graham and Lehrer, 
and therefore 
$ \SpheAlg$ is a cellular algebra too, being an idempotent truncation
of a cellular algebra, see eg. \cite{DW}, \cite{GL}, \cite{KoXi} and \cite{Xi}.
The cell modules are of the form $ e_k \Delta_k(\lambda) $ where
the $ \Delta_k(\lambda)$'s are cell modules for $ \ParAlg$ and
$ \lambda \in \Lambda^k  = \bigcup_{l=0}^{k} \Par_l $.

\medskip
In section 
\ref{sectioncellularity}, we combine results due to Murphy, see \cite{Murphy},
with specific diagrammatic calculations in order to obtain 
a basis for $ e_k \Delta_k(\lambda) $. 
In particular, in Theorem \ref{stateandproveA} we show 
\begin{equation}
\dim e_k \Delta_k(\lambda) =
    \sum_{i=l}^k \sum_{\substack{  \nu \in \Par_i  \\  \Psi(\nu) \in \Par_l }} K_{\lambda,  \Psi(\nu)} |  \Par_{{k-i}} |
\end{equation}
where $ \Psi:  \Par_i \rightarrow  \bigcup_{ k=0}^{\infty} \Par_k$ is a new multiplicity function. 
In Corollary \ref{sphpar}, we deduce from this that $ e_k \Delta_k(\lambda) \neq 0 $ if and only if $ \lambda \in \Lambda_{sph}^{k}$
where $ \Lambda_{sph}^{k} \subseteq \Lambda^k$ is another concretely defined subset of $ \Lambda^k$, see
\eqref{concretelydefined2}.

\medskip
The set $ \Lambda_{sph}^{k} $ is the natural index set for the representation theory of
$ \SpheAlg(t) $.
\ppm{It follows immediately from the construction that
  $ ( k ) \in \Lambda_{sph}^{k} $
  whereas $ ( 1^k ) \notin \Lambda_{sph}^{k} $ if $ k\ge 2 $.
  However the primitive idempotents associated with these cell modules
  are of  %independent
  intrinsic interest, so let us
(upon the suggestion of the referee) 
  add a remark in this direction.}
{\color{black}{For $ n \ge 2k $
the primitive idempotents 
associated with the cell modules for 
$ \ParAlg(n) $ 
were
determined by the second named author and Woodcock in \cite{MartinWoodcock2}. 
Recently,  Benkart-Halverson and Campbell found cancellation-free
expressions for the idempotents associated with
the cell modules $ \Delta_k (k) $ and $ \Delta_k( 1^k ) $, see \cite{BH} and \cite{Campbell}. 
%Using this, we get
\ppm{The forms of these expressions are indeed of intrinsic interest, and
  of course they verify}
that 
$ ( k ) \in \Lambda_{sph}^{k} $
whereas $ ( 1^k ) \notin \Lambda_{sph}^{k} $ if $ k\ge 2 $. }}

\medskip
The 
simple $ \SpheAlg(t) $-modules  
are $ \{ e_k L_k(\lambda) \, | \, \lambda \in \Lambda_{sph}^{k} \} $,
obtained via multiplication with $ e_k $ on the simple $ \ParAlg(t)$-modules $ L_k(\lambda) $. 
This, combined with results by the second named author, see
\cite{PMartin1}, leads
to our main Theorem \ref{mainThm} that describes  
the decomposition numbers and dimensions of the simple modules for $ \SpheAlg(t) $, 
in all cases except $ t=0$.

\medskip
Our proofs rely heavily on
Corollary \ref{quasiheriditarySphe},
stating that $ \SpheAlg(t)$ is a quasi-hereditary algebra when $ t \neq 0 $.
{\color{black}{This may be regarded as a key property of $ \SpheAlg(t)$, and also $ \ParAlg(t)$ has this
property.}}
In the final section \ref{tilting} of the paper, we take the opportunity to determine the
indecomposable projective modules and the indecomposable tilting modules for both algebras. 

\medskip
In \cite{NaPaulSriva}, S. Narayanan, D. Paul and S. Srivastava introduced the {\it multiset partition algebra}
$ \mathcal{MP}_k $ via an explicit combinatorial definition of its structure coefficients. It was
further generalized and studied by R. Orellana and M. Zabrocki in \cite{OrellanaZabrocki2} and
by A. Wilson in \cite{Wilson}. 
In \cite{NaPaulSriva} it was proved that $ \mathcal{MP}_k(n) $ is in Schur-Weyl 
duality with $ \mathbb{C} \Si_n $ on $ S^k V_n $ and so it follows that 
$ \SpheAlg(n) $ and $ \mathcal{MP}_k(n) $ are isomorphic when $ n \ge 2k $. Actually
A. Wilson has kindly informed us of a(n unpublished) proof showing that $ \SpheAlg(t) $ and $ \mathcal{MP}_k(t) $
are isomorphic in general. Given this, it is likely that the simple modules
for $ \mathcal{MP}_k(n) $ that are described in \cite{NaPaulSriva}, \cite{OrellanaZabrocki2} and \cite{Wilson} in 
terms of
{\it semistandard multiset tableaux}, are the $ G_k(\lambda)$'s of the present paper. In this sense, the Schur-Weyl duality results of our section
\ref{Schur-Weyl duality} may be considered as a complimentary approach to some of the results for $ \mathcal{MP}_k(n) $, 
developed in \cite{NaPaulSriva}, \cite{OrellanaZabrocki2} and \cite{Wilson}. On the other hand, our main results 
in sections \ref{sectioncellularity}, \ref{decompositionnumbers} and \ref{tilting},
for example the complete classification of the simple modules for $ \SpheAlg(n) $, 
and the description of these modules in 
Theorem \ref{mainThm}, have not been obtained in the 
$\mathcal{MP}_k(n) $-setting.

\medskip
Let us give a brief overview of the organization of the paper. In the following section
\ref{Basic notation} we fix the basic notation to be used throughout the paper. This
concerns integer partitions, Young tableaux and other concepts related to the representation
theory of the symmetric group. In section \ref{Bipartite partitions} we recall the notion
of bipartite partitions $ \BiPar_k $ and introduce the corresponding diagrammatic representations.
For $ b \in \BiPar_k$, we further recall the lexicographic normal form $ \norm(b) $ and Garsia and Gessel's
normal form $GG(b)$ from \cite{GG}. In section \ref{The partition algebra} we recall the partition
algebra $ \ParAlg$ and introduce the spherical partition algebra $ \SpheAlg$, the main protagonist of
our paper. In section \ref{Rank of the} we show that $ \rank_{\CC[x]} \SpheAlg = bp_k $,
by constructing a concrete basis for $ \SpheAlg$. This uses diagrammatic arguments
involving the normal form $\norm(b)$ from section \ref{Bipartite partitions}. 
In section \ref{Schur-Weyl duality} we construct commuting actions of $ \SpheAlg(n) $ and $ \CC \Si_n $
on $ S^k V_n $ and show that they satisfy a double centralizer property. Motivated by
a recent paper of Benkart, Halverson and Harman, see \cite{BHH}, 
we find a direct sum decomposition of $  S^k V_n $ 
in terms of permutation modules for $ \CC \Si_n $, which allows us to determine 
the dimension of the irreducible $ \SpheAlg(n) $-modules $ G_k(\lambda) $ that appear in $  S^k V_n $. 

\medskip
%{\color{black}{\sout{Section \ref{sectioncellularity} is the most technically involved section of our paper. }}}
In section \ref{sectioncellularity} we show that $ \SpheAlg $ is a cellular algebra, determine a basis for its cell modules
and determine the parametrizing poset for $ \SpheAlg $. Using Garsia and Gessel's normal form
for bipartite partitions, we further construct a Robinson-Schensted-Knuth type bijection for 
$ \SpheAlg $. Finally, we show that $ \SpheAlg(t) $ is quasi-hereditary when $ t \neq 0$. 

\medskip
In section \ref{decompositionnumbers} we obtain the main Theorems involving the decomposition
numbers for $ \SpheAlg(t)$ and finally, in section \ref{tilting}, we obtain the Loewy structure
of the indecomposable projective modules and tilting modules.

\subsection*{{\color{black}{Acknowledgements}}}
The authors would like to express their gratitude to Stephen Griffeth and Luc Lapointe for 
insightful comments on the results of the paper. They are especially grateful to Alexander Wilson for
pointing out the connection to the multiset partition algebra and for mentioning the references
\cite{NaPaulSriva}, \cite{OrellanaZabrocki2} and \cite{Wilson}.   
Finally, they wish to express their gratitude to the anonymous referee
{\color{black}{for indicating several research projects related to the present work
and for communicating detailed suggestions that helped them improve substantially the clarity and presentation
of the paper.}}

\section{Basic notation}\label{Basic notation}
In this section we quickly fix the relevant notation concerning partitions, Young tableaux, and so on.

\medskip
For $ k \in \N$ we let $\Par_k $ be the set of {\it integer partitions} of $ k $, that is 
weakly decreasing
positive integer sequences $ \lambda=(\lambda_1, \lambda_2, \ldots, \lambda_p ) $
such that $ \lambda_1+\lambda_2+\ldots + \lambda_p = k$.
The length of $ \lambda $ is defined to be $ \ell(\lambda) = p $ and its order is defined as $ | \lambda | = k $. 
The set of partitions in $ \Par_k $ of length less then or equal to $ l $ is denoted
$  \Par_k^{\le l } $ and
the set of partitions in $ \Par_k $ of length equal to $ l $ is denoted
$  \Par_k^{l } $. 
Using the convention $ \Par_0 = \emptyset $ we define
$ \Par= \bigcup_{ k=0}^{\infty} \Par_k $ and $ \Par^{\le l }= \bigcup_{ k=0}^{\infty} \Par_k^{\le l } $. 
We sometimes write $ \lambda \in \Par_k $ in the form $ \lambda=
(\lambda_1^{a_1}, \lambda_2^{a_1}, \ldots, \lambda_p^{a_p} ) $ where $ \lambda_1 >
\lambda_1 >  \ldots > \lambda_p  $ and where $ a_i$ is the multiplicity of $ \lambda_i $ in $ \lambda$.

\medskip
More generally, for $ k \in \N$ we let $\Comp_k $ be the set of {\it compositions of $k$},
that is positive integer sequences $ \lambda=(\lambda_1, \lambda_2, \ldots, \lambda_p ) $
such that $ \lambda_1+\lambda_2+\ldots + \lambda_p = k$.
For $ \mu = (\mu_1, \mu_2, \ldots, \mu_p ) \in \Comp_k $ and $ \nu = (\nu_1, \nu_2, \ldots, \nu_q ) \in \Comp_l $ 
we define $ \mu \cdot \nu = (\mu_1, \mu_2, \ldots, \mu_p, \nu_1, \nu_2, \ldots, \nu_q ) \in \Comp_{k+l} $.
For $ \mu =  (\mu_1, \mu_2, \ldots, \mu_p ) \in \Comp_k $
we let $ \ord(\mu) \in \Par_k $ be the partition obtained
from $ \mu $ 
by reordering the $ \mu_i$'s. 

\medskip
We identify $ \lambda \in \Par_k $, and more generally
$ \lambda \in \Comp_k $, 
with its Young diagram, for example
\begin{equation}\label{dib14}
  (5,3,2) =  \raisebox{-.4\height}{\includegraphics[scale=0.7]{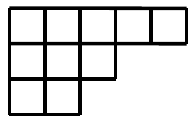}}
 \raisebox{-13\height}{,}  
  \, \, \, \, \, \, 
(2,3,5) =  \raisebox{-.4\height}{\includegraphics[scale=0.7]{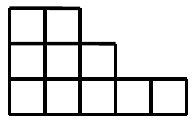}} \, \, 
\raisebox{-13\height}{.}
\end{equation}
We use matrix convention to label the boxes, also called nodes, of $ \lambda $.
Thus, $(1,1),(1,2), \ldots, (1,\lambda_1) $
are the nodes of the first row of $\lambda$, etc.
We write $ u \in \lambda$ if $ u $ is a node of $ \lambda $. 
For $ \lambda \in \Comp_k$, a $\lambda$-{\it tableau}
$ \s$ is
a filling of the nodes of $ \lambda$ with the numbers $ \{1,2,\ldots, k\} $, each number occurring exactly once.
A $\lambda$-tableau $ \s$, is called row/column standard if the numbers in each row/column are
increasing from left to right/top to bottom, and is called standard if it is both row and column standard.
The set of all $\lambda$-tableaux is denoted $ \tab(\lambda)$ 
and the set of all standard $\lambda$-tableaux is denoted $ \std(\lambda)$.
For $ \s \in \tab(\lambda)$ we define $ \shape(\s) = \lambda$. 
Below are examples
of a row standard and a standard $\lambda $-tableau, for $ \lambda = (5,3,2) $. 

\begin{equation}\label{dib14+15}
  \raisebox{-.4\height}{\includegraphics[scale=0.7]{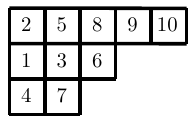}}
 \raisebox{-13\height}{,}
  \, \, \, \, \, \, \, \, \, 
 \raisebox{-.4\height}{\includegraphics[scale=0.7]{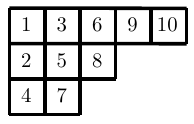}}
 \raisebox{-13\height}{.}
\end{equation}

Suppose that $ \lambda \in \Comp_k $. 
For $ \s, \T \in \tab(\lambda) $ we write $ \s \sim \T$ if $ \s$ can be obtained from $\T$
by permuting the numbers within the rows of $ \T $. This defines an equivalence relation
on $ \tab(\lambda)$. The equivalence classes under $ \sim $ are called $\lambda$-{\it tabloids} and
the tabloid represented by $ \T $ is denoted $ \{ \T \} $. We let $\{ \tab(\lambda) \} $ denote
the set of $ \lambda$-tabloids.

\medskip
Let 
$ \Si_k $ be the symmetric group on $ \{1, 2 , \ldots, k \} $ and let $ \lambda \in \Par_k$. 
Then there is a natural left $ \Si_k $-action on $\tab(\lambda) $, with $ \sigma \in \Si_k $ acting on
the entries of $\s \in \tab(\lambda)$. For example, if
$ \lambda = (4,3,2) $ and 
$ \sigma = (1,3,2) (4,6,5) $ in cycle notation, then

\begin{equation}\label{dib17}
 \sigma\left ( \raisebox{-.4\height}{\includegraphics[scale=0.7]{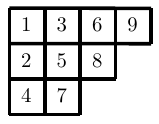}}\right) = 
 \raisebox{-.4\height}{\includegraphics[scale=0.7]{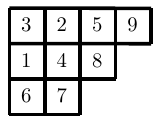}}
 \raisebox{-13\height}{.}
\end{equation}

Note that $\tab(\lambda) \cong \Si_k $, where $ \Si_k $ acts on $\Si_k $
via left multiplication. Note also 
that the left $ \Si_k $-action on $\tab(\lambda) $ induces a left $ \Si_k $-action on  $ \{ \tab(\lambda) \}$. 
Let $ M(\lambda) $ be the free $ \CC$-vector space
on $ \{ \tab(\lambda) \} $. 
Then the left $ \Si_k$-action on $ \{ \tab(\lambda) \} $ gives rise to 
a left $ \CC \Si_k$-module structure on $ M(\lambda) $. This is the {\it permutation module}
for $ \CC \Si_k$.  We have 
\begin{equation}\label{multinomial}
\dim  M(\lambda) = \binom{k}{ \lambda_1,  \lambda_2,  \ldots ,\lambda_p} 
\mbox{ where }  \binom{k}{ \lambda_1,  \lambda_2,  \ldots ,\lambda_p} 
\mbox{ is the multinomial coefficient. }
\end{equation}

\medskip
The irreducible $ \CC \Si_k $-modules are the {\it Specht modules}
$ \{ S(\lambda)  \, | \,  \lambda \in \Par_k\}$, see for example \cite{James}. 
We have $ \dim S(\lambda) = \Std(\lambda) $.

\medskip
Let $ \lambda \in \Par_k $ and let $ \mu =(\mu_1, \mu_2. \ldots, \mu_q) \in \Comp_k$. 
Then a {\it semistandard} $ \lambda$-tableau 
$ \s$ of type $ \mu $ is
a filling of the nodes of $ \lambda$, with
the number $1$ occurring 
$ \mu_1 $ times, 
the number $2$ occurring 
$ \mu_2 $ times and so on,  
such that the numbers in each row of $ \s $ are weakly increasing from left to right,
whereas the numbers in each column of $ \s $ are strictly increasing from top to bottom.
For example, if $ \lambda = (4,3, 2) $ and $ \mu = (3,3,3) $, the following are
the two possible semistandard $ \lambda$-tableaux of type $ \mu$

\begin{equation}\label{dib19}
  \raisebox{-.4\height}{\includegraphics[scale=0.7]{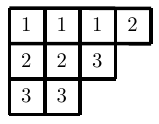}}
 \raisebox{-13\height}{,}
  \, \, \,   \, \, \,   
 \raisebox{-.4\height}{\includegraphics[scale=0.7]{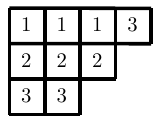}}
 \raisebox{-13\height}{.}
\end{equation}
The set of semistandard $ \lambda$-tableaux of type $ \mu$ is denoted $ \sstd(\lambda, \mu ) $
and its cardinality $ |  \sstd(\lambda, \mu ) | $ is the {\it Kostka number} $ K_{\lambda \mu} $. 
For example, 
for $ \lambda= (4,3,2) $ and $ \mu = (3,3,3) $ we have $ K_{\lambda  \mu} =2$, 
as can be read off from \eqref{dib19}.

\medskip
Let $  [M(\mu): S(\lambda) ]   $ be the multiplicity of $ S(\lambda ) $ in $ M(\mu) $.
Then we have that 
\begin{equation}\label{multiKostka}
 [M(\mu): S(\lambda) ] = K_{\lambda \mu}. 
\end{equation}
If $ \nu = (\nu_1, \ldots, \nu_l) \in \Comp_k $ is obtained from $ \mu = 
(\mu_1, \ldots, \mu_l) \in \Comp_k $ via $ \nu_i = \mu_{ \sigma(i) } $ for some
$ \sigma \in \Si_l $, then there exists an
isomorphism $  M(\mu) \cong M(\nu) $ of $ \CC \Si_k $-modules. This is reflected in 
\eqref{multiKostka}, since $ K_{\lambda \mu}  = K_{\lambda \nu}  $ in that case.

\section{Bipartite partitions}\label{Bipartite partitions}
In this section we recall the notion of
bipartite partitions and introduce the spherical partition algebra. 

\medskip
  
For $ k \in \N$, we let $ \BiPar_k $ be the set of {\it bipartite} partitions of $ k  $. That is, 
$ \BiPar_k $ is 
the set of multisets
$ b= \{ [x_1, y_1],  [x_2, y_2], \ldots,  [x_a, y_a]   \} $
of pairs $ [x_i, y_i] $ such that 
$ x_i $ and $ y_i $ are nonnegative integers, not both zero, 
satisfying 
\begin{equation}
\sum_{i =1}^a x_i = \sum_{i =1 }^a  y_i= k. 
\end{equation}
Let $ bp_k $ be the cardinality of $  \BiPar_k $. Then $bp_1= 2 $, since 
$ \BiPar_1 $ consists of the multisets
\begin{equation}
 \{ [1, 1] \},  \{ [1, 0], [0,1] \}.
\end{equation}
Similarly,
$bp_2= 9 $, since 
$ \BiPar_2 $ consists of the multisets
\begin{equation}
  \begin{array}{l}
    \{ [2,2] \}, \,  \{ [1, 0], [1,2] \}, \,  \{ [2, 1], [0,1] \},\,  \{ [1, 1], [1,1]\},\,  \{ [2, 0], [0,2] \},\, 
 \{ [2, 0], [0,1], [0,1] \}
    \\
 \{ [1, 0], [1,0], [0,2] \}, \,  \{ [1, 1], [1,0], [0,1] \}, \, \{ [1, 0], [1,0], [0,1], [0,1] \}. 
  \end{array}
\end{equation}
We use the convention that $bp_0= 1 $. The sequence
\begin{equation} ( bp_0, bp_1, bp_2, bp_3,  bp_4, bp_5, \ldots )
= (1, 2, 9, 31, 109, 339, \ldots )
\end{equation}
is A002774 in the OEIS.
%{\color{black}{\sout{see \url{https://oeis.org/A002774}.}}}

\medskip
Bipartite partitions in $ \BiPar_k $ are also known as {\it vector} partitions of $ [k,k]$. Their
history goes back to the work of Macmahon, and their combinatorics have 
been studied for example in \cite{F. C. Auluck}, \cite{GG} and \cite{MacMahon}.
%{\color{black}{\sout{In the present paper, we shall see that they are also the dimensions of certain interesting algebras. }}}

\medskip
For $ b= \{ [x_1, y_1], [x_2, y_2], \ldots,  [x_a, y_a]  \} \in \BiPar_k $ we represent
each part $ [x_i, y_i] $ of $ b $ via two parallel horizontal lines of points, the
top row containing $ x_i $ points and the bottom row containing $ y_i $ points, that are joined
via a \emph{propagating} line from the leftmost top point to the leftmost bottom point, for example
\begin{equation}
  [5,3] =  \,  \raisebox{-.4\height}{\includegraphics[scale=0.8]{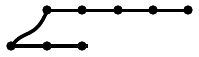}}
\raisebox{-6\height}{.}
 %\raisebox{-6\height}{,} \,  \, \, \, \, \, \, \, \, \, \, \, 
%[5,4] = \,   \raisebox{-.4\height}{\includegraphics[scale=0.8]{dib2.pdf}} \, \, 
\end{equation}
We represent $ b $ itself diagrammatically by concatenating the diagrams of the parts $ [x_i, y_i] $ from
left to right, for example for $ b = \{[3, 1], [2, 2], [3, 2], [0, 4], [2, 1] \} $ we have 
\begin{equation}\label{dib3}
b \mapsto    \raisebox{-.4\height}{\includegraphics[scale=0.8]{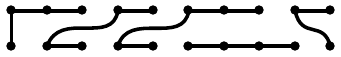}}
\raisebox{-6\height}{.}
\end{equation}
Note that since elements of $ \BiPar_k $ are multisets, 
this diagrammatic representation of $ b \in \BiPar_k $ is not unique, since any permutation of 
the parts of $ b \in \BiPar_k $ does not change $ b $. For example we have 
\begin{equation}
\{ [2,1], [1,2] \}  \mapsto  \,  \raisebox{-.4\height}{\includegraphics[scale=0.8]{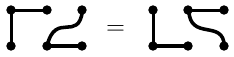}}
\raisebox{-6\height}{.}
\end{equation}
In order to remediate this nonuniqueness, we introduce for
$ b \in  \BiPar_k $ 
the {\it normal form} $ \norm(b) $, using the appropriate lexicographic order. To be precise,
suppose that $ b = \{ [x_1,y_1], [x_2,y_2], \ldots, [x_a , y_a ] \} $. Then we define
$ \norm(b) = \left( [x_{\sigma(1)},y_{\sigma(1)}], [x_{\sigma(2)},y_{\sigma(2)}], \ldots, [x_{\sigma(a)}, y_{\sigma(a)}]
\right) $
where $ \sigma \in \Si_a $ is chosen such that if $ i \ge j $ then either
$ x_{\sigma(i)} <  x_{\sigma(j )} $ or ($ x_{\sigma(i)} =  x_{\sigma(j )} $ and
$ y_{\sigma(i)} \le  y_{\sigma(j )} $). For example, we have
\begin{equation}\label{lexi}
  \norm \big( \{ [1,2], [2,1], [4,1], [0,2], [0,1], [1,2], [1,1], [3,2]\} \big)   =
  \big( [4,1], [3,2], [2,1], [1,2], [1,2], [1,1], [0,2], [0,1] \big) .   
\end{equation}
Using the normal form $ \norm(b) $, elements of $ \BiPar_k $ may be viewed as sequences of pairs $ [x_i, y_i] $
rather than multisets of such pairs. For $ \norm(b) $ applied to $ b $ as in \eqref{dib3}
we have
\begin{equation}\label{dib11}
  \norm(b) \mapsto  \raisebox{-.4\height}{\includegraphics[scale=0.8]{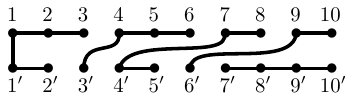}}
  \raisebox{-6\height}{.}
\end{equation}

\medskip
In \cite{GG}, Garsia and Gessel gave another characterization of $ \BiPar_k $,
that we shall need. Let $ \lambda = (\lambda_1, \lambda_2, \ldots, \lambda_l) \in \Par_k $
and $ \sigma= (\sigma_1, \sigma_2, \ldots, \sigma_l ) \in \Si_l $ be a symmetric group
element written in {\it  permutation notation}, by which we mean that $ \sigma_i \in \{ 1,2, \ldots, l \} $ and
that 
$ \sigma $ maps $ i $ to $ \sigma_i $ for all $ i $.  Then $ \lambda $ is said to be {\it $ \sigma$-compatible} if
$ \lambda_i = \lambda_{i+1} $ implies $ \sigma_i < \sigma_{i+1} $. 

Suppose now that $ b = 
 \{ [x_1,y_1], [x_2,y_2], \ldots, [x_a , y_a ] \}
\in  \BiPar_k $ and consider a diagrammatic representation for $ b $ as in
\eqref{dib3}.
Define $ \lambda^{top} $ as the partition obtained from the nonzero $ x_i$'s via reordering, and define 
similarly $ \lambda^{bot} $. 
Next reorder the top points and bottom points of the diagram in such a way that
there are no crossings between the propagating
lines leaving 
parts of the same length in $ \lambda^{top} $, and similarly for $ \lambda^{bot} $, 
and let $ GG(b) $ be the resulting diagram. 
Define $ \lambda^{top, pro} $ to 
be the partition extracted from $ \lambda^{top} $ by eliminating the parts
with no propagating lines, and define similarly $ \lambda^{bot, pro} $. 
Then $ \lambda^{top, pro} $ and $ \lambda^{bot, pro} $ are partitions of the same length, say $ l$, 
and so we may define $ \sigma =( \sigma_1, \sigma_2, \ldots, \sigma_l ) \in \Si_l $
by the condition that 
$ \lambda^{bot, pro}_{ 1} $ is connected to $ \lambda^{top, pro}_{ \sigma_1} $, whereas 
$ \lambda^{bot, pro}_{ 2} $ is connected to $ \lambda^{top, pro}_{ \sigma_2} $, and so on.
With this notation, Theorem 2.1 of \cite{GG} states that 
$  \lambda^{bot, pro} $ is $\sigma$-compatible
whereas 
$  \lambda^{top, pro} $ is $\sigma^{-1}$-compatible, and that $ \BiPar_k $ is characterised by
these properties. In other words, the diagram $ GG(b) $ is another normal form for $ b \in \BiPar_k$.
For example, for $ b $ as in \eqref{dib3}, we have 
\begin{equation}\label{gg}
GG(b)  \mapsto  \,  \raisebox{-.4\height}{\includegraphics[scale=0.8]{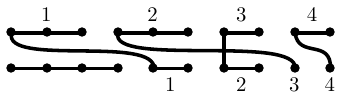}}
\end{equation}
and so $ \lambda^{top, prop} = (3,3,2,2) $, $ \lambda^{bot, prop} = (2,2,1,1) $
and $ \sigma= (1,3,2,4)$.

We define the {\it propagating part} of $ GG(b) $ to be the diagram
obtained from $ GG(b) $ by removing 
all components that are completely contained in the top line or in the bottom line of points. For example,
for $ GG(b) $ as in \eqref{gg}, the propagating part is 
\begin{equation}\label{ggg}
  \raisebox{-.4\height}{\includegraphics[scale=0.8]{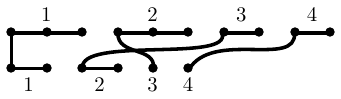}}
\raisebox{-10\height}{.}
\end{equation}

\section{The partition algebra and the spherical partition algebra}\label{The partition algebra}
We next recall the partition algebra $ \ParAlg$; it was 
introduced by the second named author via considerations
in statistical mechanics, see \cite{PMartin}.
Let $ \SetPar_k $ be the set of set partitions on $ \{ 1,2 \ldots, k \} $, 
that is the set of equivalence relations
$ d $ on $ \{ 1,2 \ldots, k \}$. For even subscript $ 2 k $ we
shall usually think of $ \SetPar_{2k} $ as set partitions on $ \{ 1,2 \ldots, k \} \cup 
\{ 1^{\prime},2^{\prime} \ldots, k^{\prime} \}$. 
If $ d \in \SetPar_{k} $ we
write $ d = \{ d_1, d_2, \ldots, d_a   \} $ where the $ d_i $'s are the classes, or {\it blocks}, of $ d $.
If furthermore $ d \in \SetPar_{2k}$, we represent $d$ diagrammatically using two parallel
horizontal lines of points,
just as for elements of $\BiPar_k$, but this time labeling the top points
$ \{ 1, 2, \ldots, k\} $ and the bottom points $ \{ 1^{\prime}, 2^{\prime}, \ldots, k^{\prime}\} $,
from left to right. We draw lines between these points in such a way that the connected components,
in {\color{black}{the graph-theoretic}} sense, 
of the corresponding graph are exactly the blocks of $ d $, for example
\begin{equation}\label{dib5}
  \{ \{1\}, \{2,3, 7,8,9, 6^{\prime}, 7^{\prime}, 8^{\prime}\}, \{ 4,5,6,1^{\prime}, 2^{\prime}\},
\{ 3^{\prime}, 4^{\prime}, 5^{\prime} \}, 
  \{ 9^{\prime} \}
     \} \,  \mapsto  \,  \raisebox{-.4\height}{\includegraphics[scale=0.8]{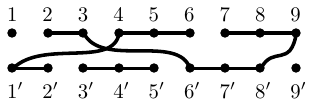}}
\raisebox{-4.3\height}{.}
\end{equation}
Note that, just as for elements of $\BiPar_k$, this diagrammatic representation of $ d \in \SetPar_{2k} $ is
not unique.

For $ d = \{ d_1, d_2, \ldots, d_a \} \in \SetPar_{2k} $, we say that a block $ d_i $ is {\it propagating}
if $ d_i \cap \{ 1,2, \ldots, k  \} \neq \emptyset $ and 
$ d_i \cap \{ 1^{\prime},2^{\prime}, \ldots, k^{\prime}  \} \neq \emptyset $. 
If $ d_i \cap \{ 1,2, \ldots, k  \} \neq \emptyset $ we say that $ d_i \cap \{ 1,2, \ldots, k  \}$
is an \emph{intersection top block} for
$d $ and if $ d_i \cap \{ 1^{\prime},2^{\prime}, \ldots, k^{\prime}  \} \neq \emptyset $
we say that $ d_i \cap \{ 1^{\prime},2^{\prime}, \ldots, k^{\prime}  \} $ is an \emph{intersection
  bottom block} for $d $.

\medskip
{\color{black}{We define $ \ParAlg$ as the $ \CC[x]$-algebra that, as a $ \CC[x]$-module,}} is free on  
$ \SetPar_{2k} $, and that has multiplication defined as follows. For elements
$ d, d_1 \in \SetPar_{2k} $, let $ d \circ_1 d_1 $ be the concatenation 
of $ d $ and $ d_1 $ with $ d$ on top of $ d_1$. There may be one or several \lq internal\rq\ connected components
of $ d \circ_1 d_1 $, 
that is components that do not intersect any of the top or bottom points of $ d \circ_1 d_1 $. 
Let $ d \circ_2 d_1 $
be the diagram obtained from $ d \circ_1 d_1 $ by removing these $ N $, say, internal 
components. There may still one or several \lq internal points\rq\ of $ d \circ_2 d_1 $, that is points that are
neither top or bottom points of $ d \circ_2 d_1 $, and we let $ d \circ_3 d_1 $
be the diagram obtained from $ d \circ_2 d_1 $ by eliminating these points. 
We may now view $ d \circ_3 d_1 $ as the diagram of a set partition and 
the product in $ \ParAlg $ of $ d $ and $d_1 $ is defined as $ d d_1 = x^N d \circ_3 d_1 $. 
The product of two general elements of $ \ParAlg $ is defined
{\color{black}{by the linear extension of the multiplicative operation we have defined.}} 

\medskip
For example, if
\begin{equation}\label{components}
  d=  \raisebox{-.45\height}{\includegraphics[scale=0.8]{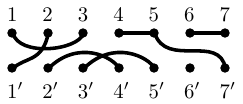}},\, \,\, \,\, \,
  d_1=  \raisebox{-.45\height}{\includegraphics[scale=0.8]{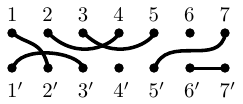}}
\end{equation}
we have that 
\begin{equation}
  d d_1 =  \raisebox{-.45\height}{\includegraphics[scale=0.8]{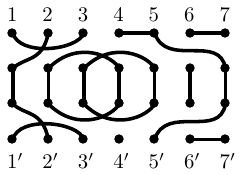}}=\, \,\, 
     x^3 \,  \raisebox{-.45\height}{\includegraphics[scale=0.8]{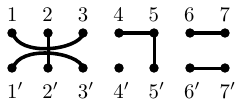}}
\raisebox{-6\height}{.}
\end{equation}
One checks that this rule gives rise to a well-defined associative 
multiplication on $  \ParAlg$, in other words, $ d d_1 $ does not depend on the choices of
diagrammatic representations for $ d $ and $ d_1 $. 

\medskip

For any $ t \in \CC $ we define the specialized partition algebra 
$\ParAlg(t) =  \ParAlg \otimes_{\CC[x] } \CC $ where $ \CC $ is made into an $ \CC[x] $-algebra
via $ x \mapsto t $. 

\medskip
As is well known, $ \Si_k $ is a Coxeter group on generators $ S = \{s_1, s_2, \ldots, s_{k-1} \} $
where $ s_i $ is the simple transposition $ s_i = (i,i+1) $.
Let $\CC[x] \Si_k $ be the group algebra for $ \Si_k $ over $ \CC[x] $. Then 
there is a natural algebra inclusion $ \iota_k:  \CC[x] \Si_k \xhookrightarrow{} \ParAlg $
given by 
\begin{equation}\label{dib10}
  s_i  \mapsto  \raisebox{-.45\height}{\includegraphics[scale=0.8]{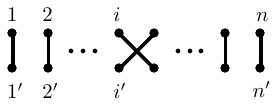}}
\raisebox{-5\height}{.}
\end{equation}
Let $ \displaystyle e_k = \iota_k\left ( \dfrac{1}{k! } \sum_{ \sigma \in \Si_k } \sigma\right)  $. 
Then $ e_k $ is an idempotent of $ \ParAlg $. We use it to introduce the protagonist
of the present paper. 
\begin{definition}
  The spherical partition algebra $\SpheAlg $ is defined as the idempotent truncation
  of $ \ParAlg $ with idempotent $ e_k $, that is
\begin{equation}
 \SpheAlg = e_k \ParAlg e_k. 
\end{equation}
Similarly, for $ t \in \CC $ we define the specialized spherical partition algebra
$\SpheAlg(t) $ as $\SpheAlg(t) = e_k \ParAlg(t) e_k  $. 
 \end{definition} 
Note that $\SpheAlg $ is a subalgebra of $ \ParAlg $, but not a unital subalgebra, since
the one-element for $\SpheAlg $ is $ e_k $, and similarly for $ \SpheAlg(t) $. 
  
\section{Rank of the Spherical Partition Algebra}\label{Rank of the}
As a $ \CC[x]$-module $ \SpheAlg $ is automatically free,
since $ \CC[x]$ is a PID 
and $ \SpheAlg $ is 
a submodule of the free $ \CC[x]$-module $ \ParAlg $, and hence torsion-free. 
Our first task is to determine the rank of $ \SpheAlg $. 

\medskip
For this, we first observe that any diagrammatic
representation of $ b \in \BiPar_k $ may be viewed 
as an element of 
$\SetPar_{2k} $. For example, for $ b  $ as in \eqref{dib3}, 
and hence $ \norm(b) $ as in \eqref{dib11}, we have
\begin{equation}
b = \raisebox{-.45\height}{\includegraphics[scale=0.8]{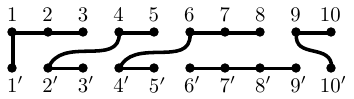}}, \, \, \,  \, \, \, 
\norm(b) = \raisebox{-.45\height}{\includegraphics[scale=0.8]{dib11.pdf}}
\raisebox{-6\height}{.}
\end{equation}
We next recall some results and conventions from \cite{Xi}.
For $ d \in  \SetPar_{2k}$ there is a canonical diagrammatic representation $ \normC(d) $ for $ d$
in which the propagating
blocks all appear with only one propagating line, which connects the leftmost points of the
corresponding top and bottom blocks. 
For example, for $ d $ as in \eqref{dib5}, we have
\begin{equation}\label{13}
d=   \raisebox{-.4\height}{\includegraphics[scale=0.8]{dib5.pdf}}, \, \, 
\normC(d) =   \raisebox{-.4\height}{\includegraphics[scale=0.8]{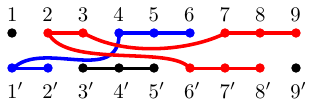}} 
\end{equation}
where we indicate with red and blue the two propagating blocks of $ \normC(d) $. 
For $l =0,1,2, \ldots, k $ we now let $  {\mathcal C}_l  $ 
be the set 
\begin{equation}\label{2.19}
  {\mathcal C}_l = \left\{ (d, S)  \,\middle\vert\, \begin{array}{l}
    d = (d_1, d_2, \ldots, d_p) \mbox{ is a set partition on } \{1,2,\ldots,k\} \mbox{ for } p \ge l \\
    S \subseteq \{ d_1, d_2, \ldots, d_p \} \mbox{ and } | S | = l
\end{array} 
  \right\}
\raisebox{-6\height}{.}
\end{equation}
Then, by \cite{Xi}, there is a bijection $ f $%  and an induced $ \CC[x] $-module isomorphism $ g $, as follows.  
\begin{equation}\label{bijection}
f:  \SetPar_{2k} {\, \, \cong \, \, }  \coprod_{l=0}^{k} { \mathcal C }_l  \times  \Si_l  \times { \mathcal C }_l \, . 
\end{equation}
For example, for $ d $ as in \eqref{13}, we have 
\begin{equation}
  f(d) = f(\normC(d) ) = 
  \big( ( d_1, {\color{red}{d_2}}, {\color{blue}{d_3}}), ( {\color{red}{d_2}}, {\color{blue}{d_3}}) \big) \times
  (1,2) \times   \big( (  {\color{blue}{d_1^{\prime}}}, d_2, {\color{red}{d_3^{\prime}}}, d_4^{\prime}), ( {\color{blue}{d_1^{\prime}}}, {\color{red}{d_3^{\prime}}}) \big)
\end{equation}
where, reading from left to right, $ d_1 = \{ 1 \} $,   $ {\color{red}{d_2}} = \{ {\color{red}{2,3,7,8,9}} \} $, corresponding to the first 
two intersection top blocks of $ d $, etc.

\medskip
We define $ \SetPar_{2k}^l   \subseteq \SetPar_{2k} $ as the set partitions whose 
diagrammatic representations
have exactly $ l $ propagating blocks and get that 
$ f $ induces a bijection $ \SetPar_{2k}^l  \cong 
{ \mathcal C }_l  \times  \Si_l  \times { \mathcal C }_l  $. 
\medskip

There are natural commuting left and right $ \Si_k $-actions on $\SetPar_{2k}^l  $ and so we also get   
left and right $ \Si_k $-actions on $ { \mathcal C }_l  \times  \Si_l  \times { \mathcal C }_l  $, via $ f$. 
These $ \Si_k $-actions on $ { \mathcal C }_l  \times  \Si_l  \times { \mathcal C }_l  $ are{\color{black}{,}} on the other
hand{\color{black}{,}} not
immediately  \lq visible\rq\, and so our first goal  
is to give another description of
$ { \mathcal C }_l  \times  \Si_l  \times { \mathcal C }_l  $ 
from which they can be read off. This
will be useful for describing a basis for $ \SpheAlg = e_k \ParAlg e_k$.

\medskip

Let $ \s,  \s_1, \T, \T_1 $ be row standard tableaux
whose shapes 
are compositions of $ k $, such that $ \s $ and $ \T $ are of length $ r $ whereas
$ \s_1  $ and $ \T_1 $ are of length $ r_1 $, where
$r $ and $ r_1  $ are both greater than or equal to $ l$. 
We then write $ (\s, \s_1) \sim_l  (\T, \T_1) $ if
$ (\s, \s_1) =  ( \rho \T, \rho_1 \T_1) $ where $ \rho $ and $ \rho_1  $
are \emph{row permutations} of $ \T $ and $ \T_1$, by which we mean 
that $ \rho $ and $  \rho_1  $ permute the rows of $ \T $ and $ \T_1 $ together with the numbers
appearing in them. 
We further require that
$ \rho $ and $  \rho_1  $
permute the first $ l $ rows of $ \T $ and $ \T_1 $ simultaneously, whereas 
they may permute the rows 
strictly below the $ l^{\color{black} \rm th} \, $row of $ \T$ and $ \T_1 $ independently.
In other words, $ \rho \in \Si_r $ and $  \rho_1 \in \Si_{r_1} $ and
$   \rho|_{ \{1, 2, \ldots, l \}} =  \rho_1|_{ \{1, 2, \ldots, l \}} $ where 
$ \rho|_{ \{1, 2, \ldots, l \}} $ and $ \rho_1|_{ \{1, 2, \ldots, l \}} $ 
denote the restrictions of $ \rho $ and $ \rho_1 $ to $ \{1, 2, \ldots, l  \} $. 
Here is an example with $  l= 3$. We indicate with red the separation of the top
$ l $ rows from the remaining lower rows of the tableaux. 
\begin{equation}\label{2.21}
  \left( \raisebox{-.5\height}{\includegraphics[scale=0.8]{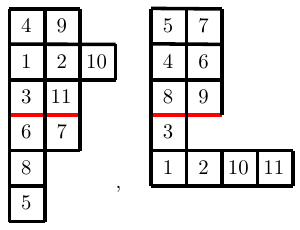}} \right) \,  \sim_3 \, 
  \left( \raisebox{-.5\height}{\includegraphics[scale=0.8]{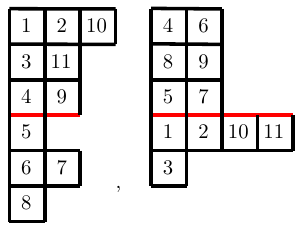}} \right) 
\raisebox{-39\height}{.}
\end{equation}
It is easy to check that $ \sim_l $ is an equivalence relation on pairs of row standard tableaux
of length greater than $ l $, 
and we define $ ( \s, \T )_{ \sim_l} $ as the equivalence class represented by $ ( \s , \T  ) $. 
Let $ i \mapsto {\rm min}_{\T}(i) $ be the function that gives the minimal (first)
number of the $ i^{\color{black} \rm th} $
%{\sout{\color{green}{th}}
row
of the row standard tableau $ \T $.
Then any class $  ( \s, \T )_{ \sim_l}  $ has a distinguished representative $  ( \s^{incr}, \T^{incr} ) $
for which $ {\rm min}_{  \s^{incr}}   $ is increasing on the restriction to $ \{1, \ldots, l \} $ 
and $ {\rm min}_{  \s^{incr}}   $ and $ {\rm min}_{  \T^{incr}}   $ are both increasing on the
restriction to $ \{l+1, l+2,  \ldots \} $. For example, in \eqref{2.21} the second pair
is the distinguished representative for its class. 

\medskip
Now $ {\rm min}_{  \T^{incr}}   $ need not be increasing on the restriction to $ \{1, \ldots, l \} $,
but there exists a row permutation $ \rho $ such that 
$ {\rm min}_{ \rho^{-1} \T^{incr}}   $ is increasing on the restriction to $ \{1, \ldots, l \} $. 
We may view $ \rho $ as an element of $ \Si_l$. For example, in \eqref{2.21} we have
$ \rho = (1,3,2) $ in permutation notation. But $ \rho $ only depends on $ (\s, \T ) $ through its 
class $ ( \s, \T )_{  \sim_l } $, 
and so we
define $\rho_{  ( \s, \T )_{ \sim_l }} =  \rho   $. 

\medskip
We next observe that any element $  d $ of $\SetPar_{2k}^l  $
gives rise to a class $ ( \s, \T  )_{\sim_l} $, by associating the intersection top blocks of $ d $ with the rows of 
$ \s $ and the intersection bottom blocks of $ d $ with the rows of $ \T $, in such a way that 
intersection top and bottom blocks that are intersections of propagating blocks for $ d $
are associated with 
the first $ l $ rows of $ \s $ and $ \T $, and with rows of the same 
row number 
if and only {\color{black}{if}} they are intersections of the same propagating block. For example, for $ d$ as in
\eqref{13} the corresponding
class is 
\begin{equation}\label{2.22}
  \left( \raisebox{-.5\height}{\includegraphics[scale=0.8]{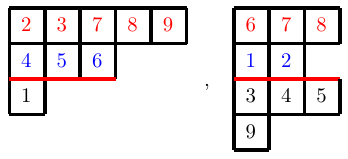}} \right)_{ \! \! \sim_2}
\raisebox{-28\height}{.} 
\end{equation}

One notes that the association just defined is a bijection between $\SetPar_{2k}^l  $ and
the set of classes $ ( \s, \T )_{ \sim_l } $. 
Note also that the $ \Si_k $-actions on
$ \SetPar_{2k}^l  $, under this bijection, correspond to the natural $ \Si_k $-actions on $  \s $ and $ \T $, 
as explained in \eqref{dib17}, although the action on $ \T $ should be chosen as a right action.

There is however also an obvious bijection between the set of classes $ ( \s, \T )_{\sim l} $ and 
$  { \mathcal C }_l  \times  \Si_l  \times { \mathcal C }_l  $. It maps $  ( \s, \T )_{\sim l} $ to
$ (d_\s, S_\s)  \times \rho_{ ( \s, \T )_{\sim l} }  \times (d_\T, S_\T) $ where $ d_\s $ is the set partition 
whose blocks are the rows of $ \s $, with $ S_\s $ being the blocks of the first $ l $ rows of $ \s $, 
and similarly for $ d_\T $ and $ S_\T $. Combining this with the bijection of the previous paragraph we have
achieved our goal of describing the $ \Si_k $-actions on
$  { \mathcal C }_l  \times  \Si_l  \times { \mathcal C }_l  $.

\medskip
We now use it to prove the following Theorem. 
\begin{theorem}\label{lemma2}
  The map $ F: \BiPar_k \rightarrow \SpheAlg$ given by $ b \mapsto e_k{\normalfont
    \norm(b)} e_k $ is injective. Moreover,
  the image of $ F $, that is
$ im  F =  \{ e_k {\normalfont \norm(b)} e_k \, | \, b \in \BiPar_k \} $, is a $\CC[x]$-basis for $ \SpheAlg $ and 
so $ \rank_{\CC[x]} \,  \SpheAlg = bp_k$. 
\end{theorem}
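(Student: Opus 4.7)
The plan is to identify the two-sided $\Si_k$-orbits on $\SetPar_{2k}$ with $\BiPar_k$, and then to observe that for every $d \in \SetPar_{2k}$ the element $e_k d e_k$ is a nonzero rational multiple of the orbit sum $\sum_{d' \in O(d)} d'$ in $\ParAlg$. Granting this, distinct orbits yield linear combinations with pairwise disjoint supports in the $\SetPar_{2k}$-basis of $\ParAlg$, so the resulting elements $e_k d e_k$ are automatically $\CC[x]$-linearly independent; spanning is immediate since $\SetPar_{2k}$ already spans $\ParAlg$. The theorem then reduces to checking that $b \mapsto \norm(b)$ picks out one representative per orbit.

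\medskip

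The orbit-sum claim is direct. Since $\iota_k(\sigma) e_k = e_k = e_k \iota_k(\sigma)$ for every $\sigma \in \Si_k$, the product $e_k d e_k$ depends only on the orbit of $d$, and expanding the definition of $e_k$ one obtains
\begin{equation*}
  e_k \, d \, e_k \; = \; \frac{|\mathrm{Stab}(d)|}{(k!)^2} \sum_{d' \in O(d)} d' ,
\end{equation*}
a positive rational multiple of the orbit sum, where $\mathrm{Stab}(d)$ denotes the two-sided stabiliser of $d$ in $\Si_k \times \Si_k$.

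\medskip

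The main obstacle is therefore the orbit classification itself. To a set partition $d \in \SetPar_{2k}$ with blocks $B_1, \ldots, B_a$ I would associate the multiset
\begin{equation*}
  b(d) := \{\, [\, |B_i \cap \{1,\ldots,k\}|, \, |B_i \cap \{1',\ldots,k'\}| \, ] \, \}_{i=1}^a .
\end{equation*}
This lies in $\BiPar_k$ and is manifestly invariant under the two-sided $\Si_k$-action; the work is to show it is a \emph{complete} invariant. For this I would use the reformulation, built up earlier in the section, of $\SetPar_{2k}^l$ in terms of equivalence classes $(\s, \T)_{\sim_l}$ of pairs of row standard tableaux via the bijection $f$ of \eqref{2.19}. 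Under this identification, the left (resp.\ right) $\Si_k$-action permutes the entries of $\s$ (resp.\ $\T$) freely while preserving row lengths, so after modding out only the multisets of row lengths remain; the relation $\sim_l$ encodes precisely the further freedom to permute the $l$ linked pairs of top and bottom rows simultaneously, and the unlinked top rows and unlinked bottom rows independently. The orbit datum is therefore exactly the multiset of pairs $[x_i, y_i]$ with both coordinates positive (from linked rows), together with multisets of pairs $[x_i, 0]$ (unlinked top rows) and $[0, y_i]$ (unlinked bottom rows), which is precisely a bipartite partition of $k$. Finally, from the diagrammatic definition of $\norm(b)$ recalled in \eqref{dib11} one reads off $b(\norm(b)) = b$, so $F$ is injective and its image is the asserted basis, whence $\rank_{\CC[x]} \SpheAlg = bp_k$.
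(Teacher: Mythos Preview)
Your proof is correct and follows essentially the same route as the paper's: both identify the two-sided $\Si_k \times \Si_k$-orbits on $\SetPar_{2k}$ with $\BiPar_k$ via the tableau-pair description and use that $e_k d e_k$ depends only on the orbit of $d$. Your presentation is a little cleaner in that you make the orbit-sum identity $e_k d e_k = \dfrac{|\mathrm{Stab}(d)|}{(k!)^2}\sum_{d' \in O(d)} d'$ explicit up front, so linear independence (disjoint supports in the $\SetPar_{2k}$-basis) and spanning follow at once; the paper instead checks by hand that $\norm(b)$ is the unique normal-form diagram appearing in the expansion of $e_k \norm(b) e_k$, and for spanning brings an arbitrary $d$ into normal form via explicit row permutations and $\Si_k$-actions on $(\s,\T)_{\sim_l}$.
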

\begin{dem}
We first show simultaneously that $ F $ is injective and that $ im F $ is a linearly independent set. 
Let $ b \in \BiPar_k $ and consider 
$  \norm(b) $ as an element of $ \SetPar_{2k} $. Let $  ( \s, \T )_{\sim_l} $ be the class associated with
$  \norm(b) $ under the bijection explained in the paragraph before \eqref{2.22} and
let $  ( \s^{incr}, \T^{incr} ) $ be its distinguished representative, 
as defined above. Here is an example
\begin{equation}\label{2.22new}
( \s^{incr}, \T^{incr} ) =   \left\{ \raisebox{-.5\height}{\includegraphics[scale=0.8]{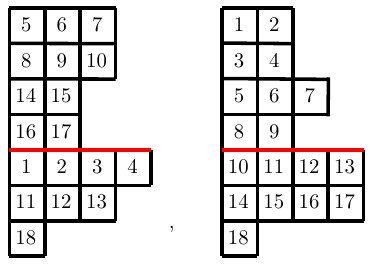}} \right\}
\raisebox{-45\height}{.} 
\end{equation}
Two properties can be observed in \eqref{2.22new} and hold for general $ ( \s^{incr}, \T^{incr} )$. 
\begin{itemize}
    \item[I.] We have $ \rho_{ (  \s, \T )_{\sim l} } = 1 $ and so $ \T^{incr} $ is the \emph{row reading tableau},
in which the numbers $ \{ 1,2,\ldots, k \} $ appear in order from left to right down the rows. Or, equivalently, 
$ {\rm min}_{  \T^{incr}}   $ is an increasing function.
\item[II.] Let $ \lambda $ be the shape of $ \s^{incr} $.  
Then $ {\rm min}_{  \s^{incr} }   $ is also increasing, but only upon restriction to
subsets $ I  $ of the row indices for $ \lambda $, for which
$\{  \lambda_i | i \in I \} $ is constant.
\end{itemize}

Using
%{\sout{\color{green}{them}}}
{\color{black}{these properties,}}
we may now argue as follows. 
Let $ \sigma, \sigma_1 \in \Si_k $ and suppose that $ \sigma \norm(b) \sigma_1 $ is of
the form $ \norm(b_1 ) $ for some $ b_1 \in \BiPar_k $. Then, passing to the pair $ ( \s^{incr}, \T^{incr} ) $
and using the properties, one sees that the only way
to obtain a{\color{black}{n}} element in normal form by acting
$ \sigma $ on $\s^{incr}$ and $ \sigma_1 $  on $\T^{incr}$ is that these two simultaneous actions only
interchange numbers appearing in the same row. 
With this, 
we deduce that 
$ b = b_1 $. 
In other words, $ \norm(b)$ is the only element
from $ \BiPar_k $ in normal form that appears in the expansion of $ e_k \norm(b) e_k $.
But this implies
that $ F $ is injective and that 
$ im F  $ is a linearly independent set, as claimed. 

\medskip
In order to prove that $ im  F $ is a spanning set, it is enough to show that  
$ e_k d e_k $ belongs to $ im  F $ for any $ d \in \SetPar_{2k} $. 
Let therefore $ ( \s, \T )_{\sim_l } $ be the class for $ d $ under
the bijection constructed before \eqref{2.22}. We first  
choose row permutations $ \rho $ and $ \rho_1 $
satisfying the conditions described in the paragraph before \eqref{2.21}, such that
$ (\rho \s, \rho_1  \T )  $ has the shape of an element corresponding to $ \norm(b) $ 
under the bijection, for some $ b \in \BiPar_k $. To be precise, 
by \eqref{lexi} this means that,
when restricted to the top $ l $
rows, the shape of $ \rho \s $ is a partition, and so are the shapes of $ \rho \s $ and $ \rho_1 \T $,
when restricted to the 
rows strictly below the $ l^{\color{black} \rm th} \,$th row, whereas $ \rho_1 \T $ is only a partition on 
the restriction to the the equally sized rows of $ \sigma \s $. Note that 
$ ( \s, \T )_{\sim l} = ( \rho \s, \rho_1 \T )_{\sim l} $. But we may at this stage choose $ \sigma, \sigma_1 \in \Si_k $
such that $ ( \sigma  \rho \s, \sigma_1 \rho_1 \T ) $ is the distinguished representative of $ \norm(b) $, for
some $ b \in \BiPar_k $ as described below \eqref{2.22new}, which shows the claim.
%{\color{black}{\sout{The Theorem is proved. }}}
\end{dem}

\section{Schur-Weyl duality for $\SpheAlg(n)$ }\label{Schur-Weyl duality}
In this section we study the specialized spherical partition algebra 
$\SpheAlg(n)$, where $ n \in \N$. Our main result
%\ppm{in this section}
is a double centralizer property
involving $\SpheAlg(n)$ and $ \Si_n $, 
both acting on the symmetric power $ S^{k} V_n $ where $ V_n$ is a $\CC $-vector space of dimension 
$ n $. It is an analogue of Schur-Weyl duality, see \cite{Schur}, \cite{Weyl}.

\medskip
Fix a basis $ \{ v_1, v_2, \ldots, v_n \} $ for $ V_n$. We consider $ V_n $ as a left $ \CC \Si_n $-module
via $ \sigma v_i = v_{ \sigma(i)} $ for $ \sigma \in \Si_n $.
Let $ V_n^{\otimes k } =\overbrace{V_n \otimes V_n \otimes \cdots \otimes V_n}^k$.
Then also $ V_n^{\otimes k } $ is a left $\CC \Si_n $-module, via the diagonal action, that is
\begin{equation} \sigma(v_{i_1} \otimes v_{i_2} \otimes \cdots \otimes  v_{i_k} ) = 
v_{ \sigma(i_1)} \otimes v_{\sigma(i_2)} \otimes \cdots \otimes  v_{\sigma(i_k)} \, \, \mbox{ for } \sigma \in \Si_n. 
\end{equation}
There is however also a natural $\CC \Si_k $-module structure on $V_n^{\otimes k } $, given by place permutation.
To distinguish it from the previous $\CC \Si_n $-module structure on $V_n^{\otimes k } $,
we choose it to be a right module structure:
%{\sout{\color{green}{in formulas:}}
\begin{equation} (v_{i_1} \otimes v_{i_2} \otimes \cdots \otimes  v_{i_k} )\sigma = 
v_{i_{\sigma(1)}} \otimes v_{i_{\sigma(2)}} \otimes \cdots \otimes  v_{i_{\sigma(k)}} \, \, \mbox{ for } \sigma \in \Si_k. 
\end{equation}
In general, the two actions commute and so $V_n^{\otimes k } $ is a
$ (\CC \Si_n, \CC \Si_k) $-bimodule. 

\medskip
We next define the {\it $k^{\color{black} \rm th} \,$symmetric power} of $ V_n$ via
\begin{equation}
S^k V_n= ( V_n^{\otimes k })e_k
\end{equation}
where $ e_k \in \CC \Si_k $ is the idempotent defined just below \eqref{dib10}.
It follows from the $ (\CC \Si_n, \CC \Si_k) $-structure on
$  V_n^{\otimes k} $ 
that $ S^k V_n $ is a left $ \CC \Si_n$-module. 

\medskip
For simplicity, we write
\begin{equation}
v_{i_1} v_{i_2} \cdots v_{i_k} = (v_{i_1} \otimes v_{i_2} \otimes  \cdots \otimes v_{i_k}) e_k 
\end{equation}
and also
\begin{equation}
  v_{i_1}^{a_1} v_{i_2}^{a_2} \cdots v_{i_p}^{a_p} =
  \bigg(    \overbrace{ v_{i_1} \otimes \cdots \otimes v_{i_1}}^{a_1 } \otimes
  \overbrace{ v_{i_2} \otimes \cdots \otimes v_{i_2}}^{a_2 }\otimes \cdots \otimes 
  \overbrace{ v_{i_p} \otimes \cdots \otimes v_{i_p}}^{a_p }
  \bigg) e_k .
\end{equation}
Then we have that
\begin{equation}\label{basis}
  \{   v_{i_1}^{m_1} v_{i_2}^{m_2} \cdots v_{i_p}^{m_p}  \, |\, 1 \le i_1 < i_2 < \ldots < i_k \le n, \,  \sum_i  m_i = k  \}
\end{equation}
is a basis for $ S^k V_n $ and so $ \dim  S^k V_n = \binom{k+n-1}{k} $.

\medskip
Our first aim is to give a decomposition of the $ \CC \Si_n$-module $ S^k V_n $ in terms of
permutation modules. 
Surprisingly, this appears to be new, and even the related $ \CC \Si_n$-decomposition 
of $ V_n^{\otimes k } $ was determined only recently in \cite{BHH}, see also 
\cite{BDM} and \cite{PMartin2}. 

\medskip
Suppose that $ \nu= (\nu_1^{a_1}, \nu_2^{a_2}, \ldots, \nu_p^{a_p} ) \in \Par_k^{\le n} $,
that is $ a_1 + a_2 + \ldots + a_p \le n $. Then, setting
$ \Phi(\nu) = \ord(a_1, a_2, \ldots, a_p, d )$ where $ d= n -( a_1 + a_2 + \ldots + a_p) $, 
we obtain a function
\begin{equation}\label{weobtainafunction}
\Phi: \Par_k^{\le n} \rightarrow \Par_n. 
\end{equation}
The following Theorem gives the promised decomposition of the $ \CC \Si_n$-module $ S^k V_n $.
\begin{theorem}\phantomsection\label{firstthm}
  \begin{description}
  \item[a)] There is an isomorphism of $ \CC \Si_n$-modules
  \begin{equation}
 S^k V_n \cong \bigoplus_{ \nu \in \Par_k^{\le n} } M( \Phi( \nu) )
  \end{equation}
where $ M( \Phi( \nu) ) $ is the permutation module as in the paragraph before \eqref{multinomial}.
\item[b)] The following multiplicity formula holds
\begin{equation}
[ S^k V_n: S(\lambda)]  = \sum_{ \nu \in \Par_k^{\le n} } {  K_{\lambda,  \Phi(\nu)} }
\end{equation}
where $ K_{\lambda,  \Phi(\nu)} $ is the Kostka number. 
  \end{description}
\end{theorem}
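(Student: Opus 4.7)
My plan is to prove (a) by partitioning the basis \eqref{basis} of $ S^k V_n $ according to the multiset of exponents that appear, and then (b) will follow immediately from \eqref{multiKostka}.

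Each basis vector $ v_{i_1}^{m_1} v_{i_2}^{m_2} \cdots v_{i_p}^{m_p} $ of $ S^k V_n $ (with $ i_1 < \cdots < i_p $ and $ m_j \ge 1 $) is uniquely encoded by the function $ f \colon \{1, \ldots, n\} \to \N $ defined by $ f(i_r) = m_r $ and $ f(j) = 0 $ otherwise, so that $ \sum_j f(j) = k $. The reordered multiset $ \ord(m_1, \ldots, m_p) $ is a partition $ \nu \in \Par_k^{\le n} $, and the diagonal $ \Si_n $-action, which in terms of $ f $ is $ \sigma \cdot f = f \circ \sigma^{-1} $, preserves the multiset of values of $ f $. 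Hence, letting $ W_\nu \subseteq S^k V_n $ be the span of those basis vectors whose exponent multiset equals $ \nu $, each $ W_\nu $ is a $ \CC \Si_n $-submodule and $ S^k V_n = \bigoplus_{\nu \in \Par_k^{\le n}} W_\nu $.

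Next I identify $ W_\nu \cong M(\Phi(\nu)) $ for each $ \nu = (\nu_1^{a_1}, \ldots, \nu_p^{a_p}) $. Setting $ d = n - (a_1 + \cdots + a_p) $, the functions $ f $ contributing to $ W_\nu $ are exactly those whose multiset of values on $ \{1, \ldots, n\} $ equals $ \{\nu_1^{a_1}, \ldots, \nu_p^{a_p}, 0^d\} $. The $ \Si_n $-action on this set is transitive, and the stabilizer of any such $ f $ is the Young subgroup $ \Si_{a_1} \times \cdots \times \Si_{a_p} \times \Si_d $, consisting of those $ \sigma \in \Si_n $ that permute each value-class within itself. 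Therefore $ W_\nu \cong \CC[\Si_n / (\Si_{a_1} \times \cdots \times \Si_{a_p} \times \Si_d)] \cong M(a_1, \ldots, a_p, d) $, and the invariance of permutation modules under reordering of the composition, recorded in the sentence below \eqref{multiKostka}, gives $ W_\nu \cong M(\Phi(\nu)) $. This proves (a), and part (b) then follows at once by additivity of multiplicities together with \eqref{multiKostka}.

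The only subtle point is the bookkeeping of the zero-exponent block of size $ d = n - \ell(\nu) $: those indices $ j $ with $ f(j) = 0 $ are freely permuted by $ \Si_n $, so this block must appear as an extra part in the composition defining the stabilizer. This is exactly what the function $ \Phi $ encodes, and it is what forces the indexing set in (a) and (b) to be $ \Par_k^{\le n} $, since one needs $ d \ge 0 $ for the decomposition to make sense.
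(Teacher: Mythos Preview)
Your proof is correct and follows essentially the same approach as the paper: both arguments partition the basis \eqref{basis} into $\Si_n$-orbits indexed by the exponent partition $\nu \in \Par_k^{\le n}$, identify each orbit module with the permutation module $M(\Phi(\nu))$ via the stabilizer, and then deduce \textbf{b)} from \textbf{a)} using \eqref{multiKostka}. Your function-encoding makes the stabilizer computation slightly more explicit, but the content is the same.
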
  
\begin{dem}
In view of \eqref{multiKostka}, ${\bf b)} $ of the Theorem follows immediately from ${\bf a)} $
of the Theorem, so let us show ${\bf a)} $.

\medskip
Choose $ v=  v_{i_1}^{m_1} v_{i_2}^{m_2} \cdots v_{i_p}^{m_p} $ an element of the basis 
for $  S^k V_n$, given in \eqref{basis}, and let $ M $ be the $\CC \Si_n $-module generated by
$ v $. Note that the $i_j $'s are distinct and so there is $ \sigma \in \Si_n $ such that
\begin{equation}
\sigma(v) = v_1^{n_1} v_2^{n_2} \cdots v_p^{n_p} \, \mbox{ where } n_1  \ge n_2 \ge \ldots \ge n_p.
\end{equation}
Define now $ \nu =  (n_1, n_2, \ldots, n_p) $
and write $ \nu = (\nu_1^{a_1},  \nu_s^{a_2},  \ldots, \nu_s^{a_s} )  $
with $ \nu_1 > \nu_2 > \ldots > \nu_s $. Then one quickly checks that 
$ \sigma(v) $ generates the $\CC \Si_n $-permutation module $ M(\alpha) $
where $ \alpha= \ord(a_1, a_2, \ldots, a_s, d ) $ for $ d= n - (a_1 +a_2 + \ldots + a_s ) $,
that is $ M =  M(\alpha) $ for 
$ \alpha = \Phi(\nu) $ and $ \nu = (n_1, n_2, \ldots, n_s) $. This proves the Theorem. 
\end{dem}

\medskip
Let us illustrate the argument of the proof of the Theorem using $ k = 17$, $ n = 15 $ and
\begin{equation}
v= v_1 v_1 ( v_2 v_2 ) v_3 v_3  ( v_4 )   v_5 v_5 v_5  (v_6 v_6 ) v_7 (  v_9 ) v_{10} v_{10} v_{10}  \in  S^{17} V_{15} 
\end{equation}
where we use
{\color{black}{parentheses}} to group equal indices. Using the notation of the proof of the
Theorem, this gives   
\begin{equation}
  \sigma(v)= v_1 v_1  v_1( v_2  v_2 v_2  ) v_3 v_3  ( v_4 v_4  )   v_5 v_5   (v_6 v_6 ) v_7 (  v_8 ) v_{9}
  \end{equation}
and so $ \nu = (3,3,2,2,2,2,1,1,1) = (3^2, 2^4, 1^3) $ and $ d = 15- (2+4+3)  = 6 $,
and hence $ \alpha=\ord(2,4,3,6) =  (6,4,3,2) $. According to the Theorem we should therefore have
$  \CC \Si_{15} v = M(\alpha) $.

\medskip
On the other hand, the subgroup of $ \Si_{15} $ stabilizing $ \sigma(v) $ is
the Young subgroup
\begin{equation}
\Si_{1,2} \times \Si_{3,4,5,6} \times \Si_{7,8,9} \times \Si_{10,11,12,13,14,15} 
\end{equation}
corresponding to the multiplicities $ (2,4,3) $ of $ \nu $ and to $ d $. 
Moreover, $ \CC \Si_{15} \sigma(v)$ is spanned by the elements 
\begin{equation}\label{basiselements}
  v_{i_1} v_{i_1}  v_{i_1}( v_{i_2}  v_{i_2} v_{i_2}  ) v_{i_3} v_{i_3}  ( v_{i_4} v_{i_4}  )   v_{i_5} v_{i_5}
  (v_{i_6} v_{i_6} ) v_{i_7} (  v_{i_8} ) v_{i_9}
  \end{equation}
for distinct $ i_j  \in \{1,2,\ldots, 15 \} $. But the elements in \eqref{basiselements}
are invariant under
permutations of $ i_1 $ and $ i_2 $, permutations of $ i_3, i_4, i_5, i_6 $
and permutations of $ i_7, i_8, i_9 $
and hence there are
$ \binom{15}{ 2,  4,  3 ,6} $ of them, {\color{black}{as expected.}}
%{\color{black}{\sout{as there should be.}}}

\begin{remark}
  \normalfont
  Note that the proof of Theorem \ref{firstthm} does not use any special properties of $ \CC $ and so
  the Theorem is valid for any ground field. 
  Note also that, in view of the observation following \eqref{multiKostka}, the
  omission of $ \ord $ from the definition of $ \Phi $ in \eqref{weobtainafunction}
  does not change the validity 
  of Theorem \ref{firstthm}. 
  \end{remark}

To the best of our knowledge, the formula for the multiplicity $ [ S^k V_n: S(\lambda)]  $
in Theorem \ref{firstthm} is new, but in the theory of symmetric functions there is another
approach 
{\color{black}{to the evaluation of 
$ [ S^k V_n: S(\lambda)]  $, going back to the work of
Aitken. We make use of this alternate 
approach below.}}

\medskip
Following the notation used in \cite{Macdonald}, 
we let $ \Lambda_{\QQ}$ be the ring of symmetric functions in infinitely many variables
$ x_1, x_2, \ldots $, defined over $ \QQ$. Any basis for $ \Lambda_{\QQ}$ is indexed by $ \Par $
and one prominent basis is 
$\{ s_{\lambda} \, | \, \lambda \in \Par \} $ the basis of Schur
functions. Let $ R^k $ be the $ \QQ$-vector space 
with basis given by the irreducible characters for $ \Si_k $ and set $ R = \bigoplus_{k=0}^{\infty} R^k $
with the convention that $ R^0 = \QQ$. Let $ \ch: R \rightarrow \Lambda_{\QQ} $ be
the characteristic map. It satisfies $ \ch(\chi^{\lambda}) = s_{\lambda} $ where
$ \chi^{\lambda} $ is the character of $ S(\lambda) $. 

\medskip
Letting $ \psi^{k}_n $ be the character of the $ \Si_n $-module $ S^{k} V_n$, 
we now have that 
\begin{equation}\label{3.15}
\sum_{ k =0}^{\infty} \ch (\psi^{k}_n) t^k= \sum_{ \lambda \in \Par_n} s_{\lambda}(1,t, t^2, \ldots ) s_{\lambda}. 
\end{equation}
This is the formula showed by Aitken in 
\cite{Aitken}, see also \cite{Thibon} and exercise 7.73 in \cite{Stanley}.
For our purposes, the usefulness of it derives from the following 
expression for 
$ s_{\lambda}(1,t, t^2, \ldots ) $, see for example Corollary 7.21.3 of \cite{Stanley}. 
\begin{equation}\label{3.16}
  s_{\lambda}(1,t, t^2, \ldots ) = \dfrac{t^{b(\lambda)}}{ \prod_{ u \in \lambda} [ h(u)] }. 
\end{equation}
Here $ [ h(u)]  = 1 - t^{h(u) } $ where $ h(u ) $ is the hook length of $ u \in \lambda $, and
$ b(\lambda) = \sum_{i = 1 }^{ \ell(\lambda) } (i-1) \lambda_i $. {\color{black}{For}} example
\begin{equation}\label{3.17}
b\left( \raisebox{-.45\height}{\includegraphics[scale=0.7]{dib14.pdf}}\right) = 7. 
\end{equation}
In the notation of symmetric function theory 
the expression in \eqref{3.15} is the {\it plethystic transformation} $h_n\Bigl( \dfrac{X}{1-t} \Bigr) $
of the complete symmetric function $ h_n $ 
where $ X= x_1 + x_2 +\ldots $, 
see for example Proposition 3.3.1 of the survey paper \cite{Ha}. Since $ h_n = s_n $, it is also equal to 
$s_n\Bigl( \dfrac{X}{1-t} \Bigr) $.
Recall that plethystic transformation plays an important role 
in the theory of integrality and positivity of Macdonald polynomials.
Indeed, these integrality and positivity properties only hold for the
plethystically transformed Macdonald polynomials, not for the original Macdonald polynomials.

\medskip

Combining the two formulas \eqref{3.15} and \eqref{3.16}, one gets an expression for the
multiplicity $ [ S^k V_n: S(\lambda)]  $ by
{\color{black}{taking}} the coefficient of $ t^k $ in the power
series expansion of \eqref{3.16}.
This is less concrete than our closed formula in Theorem
\ref{firstthm}, but, as we shall now see, it allows us to determine exactly when $  [ S^k V_n: S(\lambda)]  \neq 0 $. 

\begin{lemma}\label{exactlywhen}
  In the above setting we have that $  [ S^k V_n: S(\lambda)]  \neq 0 $
if and only if $ k \ge b(\lambda) $.   
\end{lemma}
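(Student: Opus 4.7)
The plan is to read off the multiplicity $[S^k V_n : S(\lambda)]$ from the Aitken-type identity \eqref{3.15} combined with the hook-content evaluation \eqref{3.16}. Since the $s_\mu$'s are linearly independent in $\Lambda_\QQ$ and $\ch$ sends the irreducible character $\chi^\lambda$ to $s_\lambda$, equating coefficients in \eqref{3.15} gives
\begin{equation}
[S^k V_n : S(\lambda)] = [t^k]\, s_\lambda(1,t,t^2,\ldots) = [t^k] \frac{t^{b(\lambda)}}{\prod_{u \in \lambda}(1 - t^{h(u)})}
\end{equation}
for $\lambda \in \Par_n$. Equivalently, setting $H_\lambda(t) := \prod_{u \in \lambda}(1 - t^{h(u)})^{-1}$, we have $[S^k V_n : S(\lambda)] = [t^{k - b(\lambda)}] H_\lambda(t)$, with the convention that the coefficient is zero for negative exponents.

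The \emph{only if} direction is then immediate: if $k < b(\lambda)$, the formula forces the multiplicity to be $0$. For the \emph{if} direction, I would expand
\begin{equation}
H_\lambda(t) = \prod_{u \in \lambda} \sum_{j=0}^{\infty} t^{j\, h(u)},
\end{equation}
which has nonnegative integer coefficients. The key observation is that since we may assume $n \ge 1$ (the case $n=0$ is trivial), the partition $\lambda \in \Par_n$ is nonempty, and hence it has at least one outer corner box $u^\ast$ for which $h(u^\ast) = 1$. Thus $H_\lambda(t)$ contains the factor $(1-t)^{-1} = \sum_{j \ge 0} t^j$, and so the coefficient of $t^m$ in $H_\lambda(t)$ is a sum of nonnegative integers containing at least one $1$ for \emph{every} $m \ge 0$. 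In particular, $[t^{k-b(\lambda)}] H_\lambda(t) \ge 1$ whenever $k \ge b(\lambda)$, which yields $[S^k V_n : S(\lambda)] \ne 0$.

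I do not expect serious obstacles here: once \eqref{3.15} and \eqref{3.16} are granted, the argument reduces to the simple combinatorial fact that every nonempty Young diagram has a box with hook length $1$. The only minor point worth being careful about is the distinction between the threshold $k = b(\lambda)$ (where the multiplicity is exactly $1$, coming from the constant term of $H_\lambda$) and the range $k > b(\lambda)$ (where the coefficient is guaranteed to be at least $1$ by the factor $(1-t)^{-1}$ extracted from the corner of hook length $1$), but both cases are handled uniformly by the argument above.
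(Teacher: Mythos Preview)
Your proof is correct and follows essentially the same approach as the paper: both extract the multiplicity as a coefficient in the series \eqref{3.15}--\eqref{3.16}, obtain the ``only if'' direction from the factor $t^{b(\lambda)}$, and obtain the ``if'' direction from the existence of a node of hook length $1$, which contributes a factor $(1-t)^{-1}$ whose positive coefficients cannot be cancelled. Your version is slightly more explicit about why no cancellation can occur (all factors expand with nonnegative coefficients), but the argument is the same.
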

\begin{dem}
  If $ k < b(\lambda) $, it follows immediately from \eqref{3.15} and \eqref{3.16} that
$  [ S^k V_n: S(\lambda)]  =0 $. Conversely, if $ k \ge b(\lambda) $ it follows from 
  \eqref{3.15} and \eqref{3.16} that $  [ S^k V_n: S(\lambda)]  \neq 0 $ since any partition
  $ \lambda \in \Par_n $ has at least one node $ u $ of hook length 1 which gives a 
  contribution $ \frac{t^{b(\lambda)}}{ [ h(u)] } = t^{b(\lambda)} (1 + t + t^2 + \ldots ) $ to
\eqref{3.16} that cannot be cancelled out. 
 \end{dem}

\medskip
In view of the Lemma we now define
\begin{equation}\label{defineLambaSph}
  \ParSph = \{ \lambda \in \Par_n \,| \,  b(\lambda) \le  k    \} .
\end{equation}
For $ k $ big enough, we have $   \ParSph = \Par_n$. The next Lemma makes
this statement precise. 
\begin{lemma}
We have $\ParSph = \Par_n$ if and only if $ \, \dfrac{n(n-1)}{2}   \le k $. 
\end{lemma}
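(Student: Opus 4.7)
The plan is to reduce the statement to the elementary optimization problem of computing $\max_{\lambda \in \Par_n} b(\lambda)$. By definition of $\ParSph$, the equality $\ParSph = \Par_n$ holds if and only if $b(\lambda) \le k$ for every $\lambda \in \Par_n$, which is equivalent to $\max_{\lambda \in \Par_n} b(\lambda) \le k$. So the lemma reduces to the single claim
\begin{equation*}
\max_{\lambda \in \Par_n} b(\lambda) = \tfrac{n(n-1)}{2},
\end{equation*}
with the maximum attained at $\lambda = (1^n)$.

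The first step is to re-express $b(\lambda)$ in a form where the maximization is transparent. Summing the contribution $(i-1)$ coming from each node $(i,j) \in \lambda$ column-by-column rather than row-by-row gives
\begin{equation*}
b(\lambda) \;=\; \sum_{(i,j) \in \lambda}(i-1) \;=\; \sum_{j \ge 1} \sum_{i=1}^{\lambda'_j}(i-1) \;=\; \sum_{j \ge 1} \binom{\lambda'_j}{2},
\end{equation*}
where $\lambda'$ is the conjugate of $\lambda$, so that $\sum_j \lambda'_j = n$.

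The second step is to maximize $\sum_j \binom{\lambda'_j}{2}$ over all such $\lambda'$. The identity $\binom{a+b}{2} = \binom{a}{2} + \binom{b}{2} + ab$ shows that $\binom{a}{2} + \binom{b}{2} \le \binom{a+b}{2}$ for nonnegative integers $a,b$, with equality if and only if $ab = 0$. Iterating this inequality to merge all nonzero parts of $\lambda'$ into a single part shows that the maximum is attained exactly when $\lambda'$ has one nonzero part equal to $n$, i.e.\ $\lambda' = (n)$, equivalently $\lambda = (1^n)$. The maximum value is $\binom{n}{2} = \tfrac{n(n-1)}{2}$, and a direct check confirms $b(1^n) = \sum_{i=1}^{n}(i-1) = \tfrac{n(n-1)}{2}$.

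Combining these steps: if $k \ge \tfrac{n(n-1)}{2}$ then $b(\lambda) \le \tfrac{n(n-1)}{2} \le k$ for every $\lambda \in \Par_n$, hence $\ParSph = \Par_n$; conversely if $\ParSph = \Par_n$ then $(1^n) \in \ParSph$ forces $k \ge b(1^n) = \tfrac{n(n-1)}{2}$. There is no real obstacle in this argument; the only mild care needed is the column-sum manipulation that converts $b(\lambda)$ into $\sum_j \binom{\lambda'_j}{2}$, after which convexity trivializes the rest.
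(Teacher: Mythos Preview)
Your proof is correct and follows the same overall strategy as the paper: both reduce the lemma to the claim $\max_{\lambda \in \Par_n} b(\lambda) = \tfrac{n(n-1)}{2}$, attained at $\lambda = (1^n)$. The difference lies only in how this maximum is justified. The paper interprets $b(\lambda)$ as the sum of entries of the semistandard $\lambda$-tableau with all entries $i-1$ in row $i$, and then simply asserts that it is \emph{clear} from this interpretation that the maximum over $\Par_n$ occurs at the single-column shape $(1^n)$. Your argument is more explicit: rewriting $b(\lambda) = \sum_j \binom{\lambda'_j}{2}$ via the conjugate partition and then using the superadditivity $\binom{a}{2} + \binom{b}{2} \le \binom{a+b}{2}$ pins down the maximizer rigorously. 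Your route is slightly longer but removes any appeal to intuition; the paper's route is terser but leaves the key inequality implicit.
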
  
 \begin{dem}
   For $ \lambda \in \Par_n $ we interpret $ b(\lambda ) $ as the sum of all the entries 
   of the semistandard $ \lambda$-tableau $ \T$ on $ \{0,1,2,\ldots, n-1 \}$,
   obtained by inserting $ 0 $ in all the nodes of the
   first row of $ \lambda$, $1$ in all the nodes of the second row of $ \lambda$, and so on.
   For example, for $ \lambda $ as in \eqref{3.17} we have that
\begin{equation}
  \T=  \raisebox{-.45\height}{\includegraphics[scale=0.7]{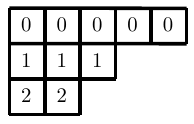}}
\raisebox{-15\height}{.} 
\end{equation}
In view of this interpretation, it is clear that for $ \lambda $ running over $\Par_n $, the
maximal value of $ b(\lambda) $ is obtained for 
the one column partition $ \lambda = (1^n) $. But for this $ \lambda$ we have
$ b(\lambda) = \dfrac{n(n-1)}{2}$, {\color{black}{which proves the desired result}}. 
 \end{dem}

\medskip
We now turn to our Schur-Weyl duality statement. It was shown in \cite{Jones} and \cite{PMartin} that $ V_n^{\otimes k } $ 
is a right module for $  \ParAlg(n)$,  
with action commuting with
the left $ \CC \Si_n $-action on $ V_n^{\otimes k } $ and
so $ V_n^{\otimes k } $ is a $ (\CC \Si_n, \CC \Si_k) $-bimodule. 
We do not need 
the actual formulas that define this $  \ParAlg(n)$-action, only the facts 
that the induced algebra homomorphism 
\begin{equation}\label{tobeprecise}
  \Upsilon:  \ParAlg(n) \twoheadrightarrow \End_{ \CC \Si_n}( V_n^{ \otimes k } ), \,
\Upsilon(p)(v) = vp,  \mbox{ where } p \in \ParAlg(n), v\in  V_n^{ \otimes k }
\end{equation}
is surjective and is an isomorphism if $ n \ge 2k $. 
The $  \ParAlg(n)$-action on $ V_n^{\otimes k } $ 
induces an
$ \SpheAlg(n) =e_k  \ParAlg(n)  e_k $-action on $ S^k V_n = (V_n^{\otimes n }) e_k  $, and hence an 
algebra homomorphism 
\begin{equation}\label{onemayexpect}
  \Upsilon_{sph}:  \SpheAlg(n) \rightarrow \End_{ \CC \Si_n}( S^k V_n ), \, \,
\Upsilon_{sph}(e_k p e_k)(v)   = v   e_k p e_k
 \mbox{ where } e_k p e_k  \in \SpheAlg(n), v\in   S^k V_n. 
\end{equation}
On the other hand, there is also an algebra homomorphism
\begin{equation}\label{secondstatementSchurWeyl}
  \Xi:  \CC \Si_n  \twoheadrightarrow \End_{ \ParAlg(n) }( V_n^{ \otimes k } ), \,
\Xi(x) = x v,  \mbox{ where } x \in \Si_n, v\in  V_n^{ \otimes k }
\end{equation}
which is surjective, as follows from the surjectivity of $ \Upsilon $ 
and 
Burnside's density {\color{black}{theorem}}, see for example
\cite{Lang} or Theorem 5.4 in \cite{HR}, and Maschke's Theorem for $ \CC \Si_n $.
It induces a homomorphism 
\begin{equation}\label{secondstatementSchurWeylA}
  \Xi_{sph}:  \CC \Si_n  \rightarrow \End_{ \SpheAlg(n) }( S^k V_n  ), \,
\Xi(x) = x v,  \mbox{ where } x \in \Si_n, v\in  S^k V_n.
\end{equation}

The algebra surjections in \eqref{tobeprecise} and \eqref{secondstatementSchurWeyl}
express the statement that the commutating actions of $ \ParAlg(n) $ and $  \CC \Si_n $ on $ V^{\otimes k}_n $
centralise each other, and therefore are in 
{\it Schur-Weyl duality}
on $ V_n^{ \otimes k } $. 

\medskip 
Note that in the statistical mechanical model underpinning the partition algebra $ \ParAlg(n) $,
that is the Potts model, the $ \ParAlg(n) $-module $ V^{\otimes k}_n $ is the {\it $n$-state Potts representation},
see \cite[\S8.2]{PMartin}. In this setting, the commuting action of $ \Si_n $ is {\it the Potts symmetry}. 

\medskip

In view of \eqref{tobeprecise} and \eqref{secondstatementSchurWeyl}, 
one may now hope that $ \SpheAlg(n) $ and $  \CC \Si_n $ are in Schur-Weyl duality
on $ S^k V_n  $, via the maps 
$ \Upsilon_{sph} $ and $   \Xi_{sph} $ given in \eqref{onemayexpect} and
\eqref{secondstatementSchurWeylA}. 
Our next result is that this indeed is the case.
\begin{theorem}\phantomsection\label{teorem 6}
  \begin{description}
  \item[a)] 
    The algebra homomorphism $   \Upsilon_{sph} $ is surjective for all $k, n $ and it 
  is an isomorphism if $ n \ge 2k$. 
\item[b)]    The algebra homomorphism $   \Xi_{sph} $ is surjective for all $k, n $. 
\end{description}  
  \end{theorem}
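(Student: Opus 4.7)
The plan is to deduce both parts from the Schur-Weyl duality between $\ParAlg(n)$ and $\CC\Si_n$ on $V_n^{\otimes k}$ that is already in hand, by idempotent truncation via $e_k$.

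For part \textbf{a)}, I would observe that $\Upsilon(e_k) \in \End_{\CC\Si_n}(V_n^{\otimes k})$ is an idempotent whose image is $V_n^{\otimes k} e_k = S^k V_n$, so there is a $\CC\Si_n$-module decomposition
\[
V_n^{\otimes k} \;=\; S^k V_n \,\oplus\, (1-\Upsilon(e_k))\,V_n^{\otimes k}.
\]
The standard idempotent-truncation identification then gives a natural algebra isomorphism
\[
\Upsilon(e_k)\,\End_{\CC\Si_n}(V_n^{\otimes k})\,\Upsilon(e_k) \;\cong\; \End_{\CC\Si_n}(S^k V_n),
\]
by sending $f$ to its restriction $f|_{S^k V_n}$, with inverse given by extension by zero on the complementary summand. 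Composing the surjection $\Upsilon$ of \eqref{tobeprecise} with this isomorphism produces precisely $\Upsilon_{sph}$, because an element $e_k p e_k$ acts on $v \in S^k V_n$ by right multiplication by $e_k p e_k$. Surjectivity of $\Upsilon_{sph}$ is thus inherited from that of $\Upsilon$, and when $n \geq 2k$ the isomorphism assertion follows since $\Upsilon$ itself is an isomorphism.

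For part \textbf{b)}, the plan is to invoke the classical double centralizer theorem. Since $\CC\Si_n$ is semisimple by Maschke's theorem, $S^k V_n$ is a semisimple left $\CC\Si_n$-module, and so Burnside's density theorem yields a surjection
\[
\CC\Si_n \;\twoheadrightarrow\; \End_{B}(S^k V_n), \qquad B := \End_{\CC\Si_n}(S^k V_n).
\]
By part \textbf{a)}, $B$ equals the image of $\SpheAlg(n)$ in $\End(S^k V_n)$, so the $\SpheAlg(n)$- and $B$-module structures on $S^k V_n$ determine the same subalgebra of $\End(S^k V_n)$; in particular $\End_B(S^k V_n) = \End_{\SpheAlg(n)}(S^k V_n)$. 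Under this identification, the above surjection is exactly $\Xi_{sph}$.

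The only step requiring actual work is the idempotent-truncation isomorphism used in part \textbf{a)}; this is essentially formal once one has the direct-sum decomposition in place, and the main point calling for care is the left/right convention for composing endomorphisms, given that $\Upsilon(p)$ is defined by right multiplication by $p$ on $V_n^{\otimes k}$. No serious obstacle is expected: both parts reduce to standard consequences of the already established Schur-Weyl duality on $V_n^{\otimes k}$ together with the semisimplicity of $\CC\Si_n$.
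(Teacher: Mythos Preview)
Your proof is correct. For the surjectivity in part \textbf{a)} and for all of part \textbf{b)}, your argument is essentially identical to the paper's: extend an endomorphism of $S^k V_n$ by zero on the complementary summand $V_n^{\otimes k}(1-e_k)$, lift via the surjection $\Upsilon$, and for \textbf{b)} invoke Burnside's density theorem together with Maschke's theorem for $\CC\Si_n$.

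Where you genuinely diverge from the paper is in the isomorphism assertion of part \textbf{a)} for $n \ge 2k$. You observe that idempotent truncation of the isomorphism $\Upsilon$ is again an isomorphism $e_k\ParAlg(n)e_k \to \Upsilon(e_k)\,\End_{\CC\Si_n}(V_n^{\otimes k})\,\Upsilon(e_k) \cong \End_{\CC\Si_n}(S^k V_n)$, which is the shortest route. The paper instead carries out an explicit dimension count: it exhibits a basis of $\End_{\CC\Si_n}(S^k V_n)$ indexed by $\Si_n$-orbits of pairs of weakly increasing $k$-tuples over $\{1,\ldots,n\}$, sets up a bijection from these orbits to $\BiPar_k$ (which is a bijection precisely when $n \ge 2k$), and concludes $\dim\End_{\CC\Si_n}(S^k V_n) = bp_k = \dim\SpheAlg(n)$ via Theorem~\ref{lemma2}. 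Your argument is slicker; the paper's buys the additional combinatorial payoff of computing $\dim\End_{\CC\Si_n}(S^k V_n)$ directly as $bp_k$, independently of Theorem~\ref{lemma2}.
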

  \begin{dem}
  Let us first show that $ \Upsilon_{sph} $ is surjective. Suppose that $ f \in \End_{ \CC \Si_n}( S^k V_n ) $.
Since $ e_k $ is an idempotent in $ \ParAlg(n) $ we have that $ S^k V_n $ is a $ \CC \Si_n $-summand of
$ V_n^{ \otimes k } $, that is $ V_n^{ \otimes k } \cong  S^k V_n \oplus M $ where $ M $ is 
the $ \CC \Si_n $-module $ M = V_n^{ \otimes k }(1-e_k) $. Hence $ f$ can be extended 
to an endomorphism $ f_{ext} \in  \End_{ \CC \Si_n}(  V_n^{ \otimes n} ) $, via $ f_{ext} = (f, 0 ) $ along
this decomposition.
But then, by \eqref{tobeprecise}, there is $ p \in \ParAlg(n) $ such that
$ f_{ext} = \Upsilon(p) $ from which we deduce that $ f = \Upsilon(e_k p e_k) $.
This shows 
surjectivity of $ \Upsilon_{sph}$.

\medskip
We next assume $ n \ge 2k $ and calculate $  \dim\End_{ \CC \Si_n}( S^k V_n ) $.
Using the basis in \eqref{basis}, an 
element $ f$ of $   \End_{ \CC} ( S^k V_n ) $ can be described as a
$ \binom{k+n-1}{k} \times \binom{k+n-1}{k} $ matrix
  $ A = \left(a_{i_1, i_2, \ldots, i_k }^{j_1, j_2 \ldots, j_k } \right)  $ for increasing sequences 
$ i_1 \le  i_2 \le \ldots \le i_k \le n $ and $ j_1 \le  j_2 \le \ldots \le j_k \le n$. 
The condition that $ f $ is $ \CC \Si_n $-linear corresponds to requiring additionally that 
\begin{equation}\label{condition}
\left(a_{i_1, i_2, \ldots, i_k }^{j_1, j_2 \ldots, j_k }\right)
  = \left(a_{\Ord(\sigma(i_1), \sigma(i_2), \ldots, \sigma(i_k)) }^{\Ord(\sigma(j_1), \sigma(j_2) \ldots, \sigma(j_k ))}\right)
\mbox{   for all  }
 \sigma \in \Si_n
\end{equation}
where $ \Ord$ is the function that reorders the elements of a sequence to produce a weakly increasing sequence. 
For weakly increasing sequences $ (r_1, r_2, \ldots r_k) $ and $ (s_1, s_2, \ldots s_k) $ over $ \{1, 2, \ldots, n \} $\
we define the matrix
$
A_{r_1 , r_2 , \ldots,  r_k}^{  s_1 , s_2 , \ldots,  s_k} =
\left(a_{i_1, i_2, \ldots, i_k }^{j_1, j_2, \ldots, j_k }\right)
$ via
\begin{equation}
  a_{i_1, i_2, \ldots, i_k }^{j_1, j_2 \ldots, j_k } =
  \begin{dcases}
    1 & \mbox{if there exists }  \sigma \in \Si_n \mbox{ such that:}\begin{array}{l} (i_1, i_2, \ldots, i_k)=  \Ord(\sigma(r_1), \sigma(r_2), \ldots, \sigma(r_k)) \mbox{ and } 
\\  (j_1, j_2, \ldots, j_k)=  \Ord(\sigma(s_1), \sigma(s_2), \ldots, \sigma(s_k))
\end{array}
    \\
    0 & \mbox{otherwise.}
  \end{dcases}
\end{equation}
Then, by \eqref{condition}, 
the distinct matrices $ A_{r_1 , r_2 , \ldots,  r_k}^{  s_1 , s_2 , \ldots,  s_k} $ form
a basis for $\End_{ \CC \Si_n}( S^k V_n ) $. We arrange pairs of weakly increasing sequences
  $ (r_1, r_2, \ldots r_k) $ and $ (s_1, s_2, \ldots s_k) $ over $ \{1, 2, \ldots, n \} $ in the form
$\small \begin{pmatrix*}[r]
    s_1, s_2 \ldots s_k \\  r_1, r_2 \ldots r_k  \end{pmatrix*} $ and then get 
an $ \Si_n $-action on them via $ \small \sigma \begin{pmatrix*}[r]
  s_1, s_2 \ldots s_k \\  r_1, r_2 \ldots r_k  \end{pmatrix*} =  \begin{pmatrix*}[r]
 \Ord( \sigma(s_1), \sigma(s_2) \ldots \sigma(s_k)) \\ \Ord( \sigma(r_1), \sigma(r_2) \ldots \sigma(r_k))  \end{pmatrix*} $. Then each matrix 
$ A_{r_1 , r_2 , \ldots,  r_k}^{  s_1 , s_2 , \ldots,  s_k} $ only depends on the $ \Si_n $-orbit of
$\small  \begin{pmatrix*}[r]
  s_1 s_2 \ldots s_k \\  r_1 r_2 \ldots r_k  \end{pmatrix*} $ 
and these orbits 
are in bijection with 
bipartite partitions in $\BiPar_k $ by letting equal numbers belong to the same
part. For example, for $ k = 16, n = 5 $ we have that 
\begin{equation}
\begin{pmatrix*}[r]
   1111 22 33 444 5555 \\    11 222 333333 5555  \end{pmatrix*} \mapsto
  \raisebox{-.35\height}{\includegraphics[scale=0.9]{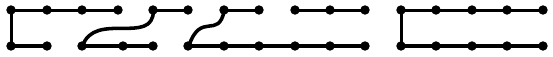}}
\raisebox{-5\height}{.} 
\end{equation}
Moreover, by the assumption $ n \ge 2k $, each $ b \in   \BiPar_k $ arises this way
from such an $ \Si_n $-orbit, and hence $ \dim \End_{ \CC \Si_n}( S^k V_n ) = bp_k.$
Combining this with Theorem \ref{lemma2} we get that $ \dim \SpheAlg(n) = \dim \End_{ \CC \Si_n}( S^k V_n ) $
  and so $ \Upsilon_{sph} $ is an isomorphism if $ n \ge 2k $. This proves ${\bf a)} $ of the Theorem,
  and ${\bf b)} $ follows from Burnside's density {\color{black}{theorem}}, once again, 
and Maschke's Theorem for $ \CC \Si_n $. 
\end{dem}  

  \medskip
  Define now $  Z_{sph}^{ k, n} $ as the image of $ \Upsilon_{sph} $, that is 
  as the centralizer algebra $  Z_{sph}^{ k, n} = \End_{ \CC \Si_n}( S^k V_n )$.
By joining the results of this section we get the following Theorem.
\begin{theorem}\phantomsection\label{joining the results} 
  \begin{description}
  \item[a)] The irreducible $   Z_{sph}^{ k, n} $-modules are indexed by
    $ \ParSph $, see \eqref{defineLambaSph}. 
  \item[b)]
For $ \lambda \in \ParSph $, let 
$ G_k(\lambda)  $ be the irreducible $Z_{sph}^{ k, n}  $-module given in $ \bf a)$. 
Then there is an isomorphism of $ (\CC \Si_n, \SpheAlg(n)) $-bimodules
    \begin{equation}
S^k V_n \cong \bigoplus_{ \lambda \in  \ParSph} S(\lambda) \otimes G_k(\lambda) 
    \end{equation}
    where $ G_k(\lambda) $ is viewed as an $\SpheAlg(n)$-module via inflation along 
    $ \SpheAlg(n) \rightarrow Z_{sph}^{ k, n}$. 
  \item[c)]
    For $ \lambda \in \ParSph $, we have $ \dim G_k(\lambda) =
    \sum_{ \nu \in \Par_k^{\le n} } {  K_{\lambda,  \Phi(\nu)} }$.
  \item[d)] $ Z_{sph}^{ k, n} $ is a semisimple algebra and 
    $ \dim Z_{sph}^{ k, n} = 
  \sum_{ \lambda \in  \ParSph } { (\dim G_k(\lambda) )^2 }$. 
  \end{description}  
  \end{theorem}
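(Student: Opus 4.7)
The plan is to extract the theorem from the classical double-centralizer/Wedderburn formalism, using the surjections already established in Theorem \ref{teorem 6} together with the multiplicity data from Theorem \ref{firstthm}. The key point is that $\CC\Si_n$ is semisimple by Maschke, so $S^k V_n$ is a semisimple $\CC\Si_n$-module, and its isotypic decomposition is completely controlled by the characters already computed.

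First I would write the $\CC\Si_n$-isotypic decomposition
\[ S^k V_n \;\cong\; \bigoplus_{\lambda \in \Par_n} S(\lambda) \otimes G_k(\lambda), \qquad G_k(\lambda) := \Hom_{\CC\Si_n}(S(\lambda),\, S^k V_n). \]
By Lemma \ref{exactlywhen} the nonzero summands are exactly those with $\lambda \in \ParSph$, and Theorem \ref{firstthm}(b) gives
\[ \dim G_k(\lambda) \;=\; [S^k V_n : S(\lambda)] \;=\; \sum_{\nu \in \Par_k^{\le n}} K_{\lambda,\Phi(\nu)}, \]
which is precisely (c). Schur's lemma applied to each isotypic component then yields the Wedderburn-style identification of the commutant
\[ Z_{sph}^{k,n} \;=\; \End_{\CC\Si_n}(S^k V_n) \;\cong\; \bigoplus_{\lambda \in \ParSph} \End_{\CC}\bigl(G_k(\lambda)\bigr), \]
a direct sum of full matrix algebras. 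From this it is immediate that $Z_{sph}^{k,n}$ is semisimple, that its simple modules are precisely $\{G_k(\lambda) : \lambda \in \ParSph\}$ (one per matrix block), and that $\dim Z_{sph}^{k,n} = \sum_{\lambda} (\dim G_k(\lambda))^2$. This establishes (a) and (d), and simultaneously equips each $G_k(\lambda)$ with its natural simple $Z_{sph}^{k,n}$-module structure; the bimodule formula in (b) is then just the isotypic decomposition above with $Z_{sph}^{k,n}$ acting on the right tensor factor.

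To finish (b) as a $\SpheAlg(n)$-statement rather than merely a $Z_{sph}^{k,n}$-statement, I would invoke Theorem \ref{teorem 6}(a): the map $\Upsilon_{sph}\colon \SpheAlg(n) \twoheadrightarrow Z_{sph}^{k,n}$ is by construction the action map on $S^k V_n$, so the $\SpheAlg(n)$-action factors through $Z_{sph}^{k,n}$ and the bimodule decomposition pulls back to a $(\CC\Si_n, \SpheAlg(n))$-bimodule decomposition with each $G_k(\lambda)$ regarded as a $\SpheAlg(n)$-module by inflation along $\Upsilon_{sph}$. I do not expect any serious obstacle: once Theorem \ref{firstthm} and Theorem \ref{teorem 6} are in hand, the present statement is essentially an assembly exercise in the standard double-centralizer machinery. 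The only point requiring genuine attention is matching the abstract indexing set of simple $Z_{sph}^{k,n}$-modules with the concrete set $\ParSph$ of \eqref{defineLambaSph}, which is handled by Lemma \ref{exactlywhen} via the equivalence $\lambda \in \ParSph \iff [S^k V_n : S(\lambda)] \neq 0 \iff G_k(\lambda) \neq 0$.
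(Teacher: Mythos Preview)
Your proposal is correct and matches the paper's approach exactly: the paper itself offers no proof beyond the sentence ``By joining the results of this section we get the following Theorem,'' and the results being joined are precisely Theorem~\ref{firstthm}, Lemma~\ref{exactlywhen}, and Theorem~\ref{teorem 6}, assembled via the standard double-centralizer/Wedderburn formalism you describe.
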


\begin{remark}\label{remark2}
  \normalfont
The Theorem should be contrasted with Theorem 3.22 in \cite{HR}, describing
the decomposition of $ V_n^{\otimes k } $ as a $ (\CC \Si_n, \ParAlg(n)) $-bimodule. In that \lq classical\rq\
setting the
  role played by our $ \ParSph$ is replaced by $ \ParPar $ defined as 
\begin{equation}
  \ParPar = \{ \lambda=(\lambda_1, \lambda_2, \ldots, \lambda_l )  \in \Par_n \, | \, \lambda_2 + \lambda_3 + \ldots + \lambda_l \le k \}.
\end{equation}
Note however that the proofs from the classical situation do not carry over to our setting.
\end{remark}

\begin{figure}[h]
\raisebox{-.35\height}{\includegraphics[scale=0.7]{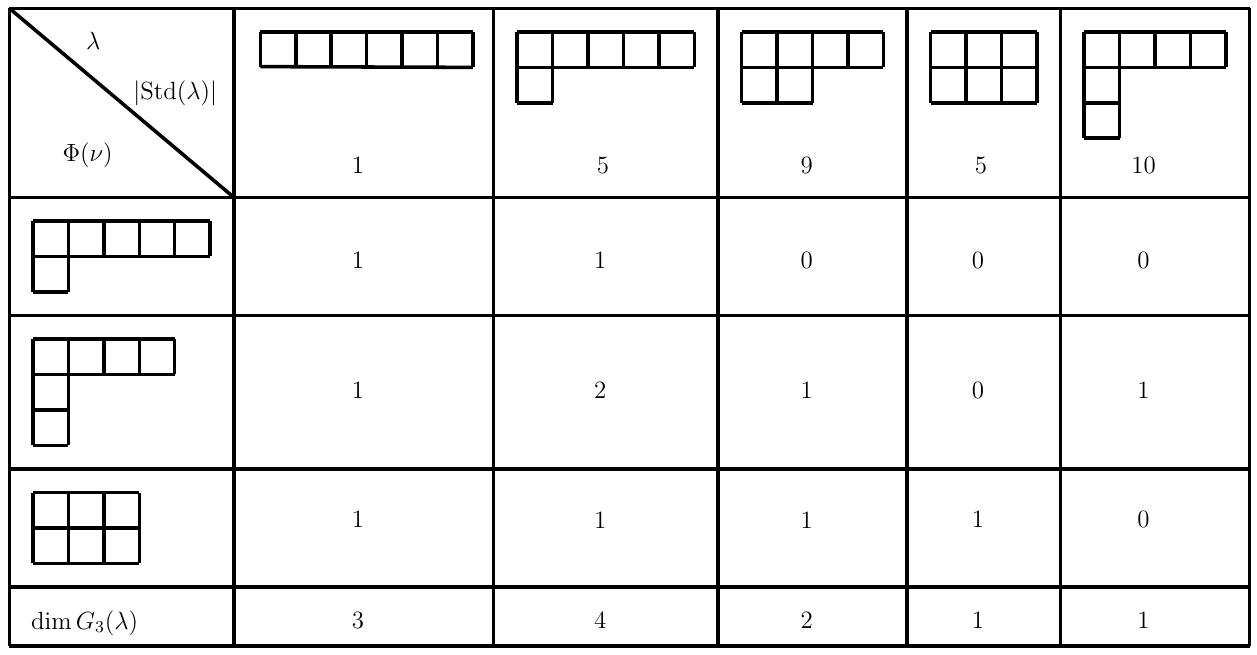}}\, \, 
\raisebox{-70\height}{.} 
\centering
\caption{Example using $n=6, k=3$.}
\label{table1}
\end{figure}

Let us illustrate ${ \bf d)} $ of Theorem \ref{joining the results}, using $ n = 6 $ and $ k = 3$. In
that case $ n \ge 2k $ and so by  
Theorem \ref{lemma2} and Theorem \ref{teorem 6} we have 
$ \dim Z_{sph}^{ 3, 6} = \dim \SpheAlgThree = 31 $. On the other hand, from
\eqref{defineLambaSph} we get 
$  \ParSphThreeSix = \{ (6), (5,1), (4,2), (3,3), (4,1,1) \} $ and since
$ \Par_3^{\le 6} = \Par_3 = \{ (3), (2,1), (1^3) \} $ we have via the definition
of $ \Phi $ in \eqref{weobtainafunction} that 
$ \{ \Phi(\nu ) \, | \, \nu \in \Par_3^{\le 6} \} = \{ (5,1), (4,1,1), (3,3) \} $. 
The table in {\color{black}{Figure \ref{table1}}} gives the Kostka numbers $ K_{\lambda, \Phi(\nu) } $ and hence 
$ \dim G_3(\lambda) $ for $ \lambda \in  \Par_{sph}^{3,6}$, via ${ \bf c)} $ of the Theorem. 

\medskip
Summing the squares of the numbers of the last row of the table we get $ 3^2+ 4^2 + 2^2 +1^2 + 1^2 = 31$,
{\color{black}{as expected}}.

\medskip
Similarly, we can use the table to illustrate ${ \bf b)} $ of Theorem
{\color{black}{\ref{joining the results}}}, at least at dimension level.
Indeed, summing the products of the numbers of the first and the last row we
get $ 1 \times 3 + 5 \times 4 + 9 \times 2 + 5 \times 1 +  10\times 1  = 56 =
\dim S^{ \color{black}{3}} V_{\color{black}{6}} $.

\begin{remark}
  \normalfont
  As already mentioned in the introduction, A. Wilson has shown that $ \SpheAlg $ coincides with the
  multiset partition algebra $ \mathcal{MP}_k(x) $ that was introduced in \cite{NaPaulSriva}. The definition
  of $ \mathcal{MP}_k(x) $ is quite different from the definition of $ \SpheAlg $, but in Lemma 5.12 of 
  \cite{NaPaulSriva} the authors prove that $ \mathcal{MP}_k(x) $ arises from $ \ParAlg $ via idempotent truncation
  with respect to a certain idempotent $  e_k^{\prime}$, defined in terms of the {\it orbit basis} for $ \ParAlg $. 
Wilson shows that 
the two idempotents $  e_k^{\prime}$ and $     e_k$ in fact coincide. 
\end{remark}

{\color{black}{
\begin{example}\label{example 1}
  \normalfont
Suppose that $n \ge 2k $. Then 
by Remark \ref{remark2} the partitions
$ (n-k, k) $ and $ (n-k, 1^k)  $ both belong to $  \ParPar $. Moreover, 
by Lemma \ref{exactlywhen}, we also have that $  (n-k, k)  $ belongs to 
$  \ParSph $ but $ (n-k, 1^k)  $ does not. 
\end{example}

\begin{remark}
  \normalfont In analogy with $\SpheAlg $, it would seem natural also to introduce an
              {\it antispherical partition algebra $ \AntiSpheAlg$} via $ \AntiSpheAlg  = f_n \ParAlg f_n  $, 
              where $ f_n = \iota_k\left ( \frac{1}{k! } \sum_{ \sigma \in \Si_k }  \Sign (\sigma)\sigma \right) $ and 
              where $ \Sign(\sigma) $ is the usual sign of $ \sigma \in \Si_k $. 
On the other hand, for any transposition $ \sigma \in \Si_k $ we have
that $ \sigma f_n = f_n \sigma = -f_n $ and so
$ \AntiSpheAlg $ 
is a small algebra, since in fact $\rank_{\CC[x]} \, \AntiSpheAlg = 2 $
for $ k \ge 2$.

\medskip
Even so, if $ n \ge 2k $, one could   
still develop analogues for $\AntiSpheAlg$ of our results for $\SpheAlg$, by replacing 
$ S^k V_n $ with the exterior power module
$  \bigwedge^{\! k} V_n = (V^{\otimes n}) f_n $.
Then $\AntiSpheAlg$ is in Schur-Weyl duality with $  \CC \Si_n $ on $ \bigwedge^{\! k} V_n $ and
we have $ \CC \Si_n $-module isomorphisms 
\begin{equation} \label{multione}
\textstyle   \bigwedge^{\!  k}  
V_n  \cong {\rm Ind}_{  \Si_{n-k} \times \Si_{k }}^{\Si_n}  \left( S(n-k ) \otimes S(1^k ) \right) 
\cong S(n-k, 1^k ) \oplus S(n-k+1, 1^{k-1} )
\end{equation}
where the last isomorphism follows from the Littlewood-Richardson rule. 
The two Specht modules appear with multiplicity one in \eqref{multione}, and so we deduce 
that $\AntiSpheAlg$ has two simple modules, each of dimension one.
This is in accordance with $\rank_{\CC[x]} \, \AntiSpheAlg = 2 $.

\medskip
We shall not consider $\AntiSpheAlg$ further in the paper.

\end{remark}
}}

\section{Cellularity of $\SpheAlg(t)$ }\label{sectioncellularity}
In this section we initiate the study of the representation theory of $\SpheAlg(t)$,
for arbitrary $ t \in \CC$.

\medskip
It was shown in \cite{PMartin1} that 
$ \ParAlg(t) $ is semisimple if and only if $ t \notin \{ 0, 1, 2, \ldots, 2k -2 \} $. 
This gives us immediately the following Theorem.
\begin{theorem}\label{semisimple}
  Suppose that $ t \notin \{ 0, 1, 2, \ldots, 2k -2 \} $. Then $\SpheAlg(t)$
  is a semisimple algebra. 
\end{theorem}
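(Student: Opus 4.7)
The plan is to derive semisimplicity of $\SpheAlg(t) = e_k \ParAlg(t) e_k $ directly from the semisimplicity of $ \ParAlg(t) $ together with the fact that idempotent truncation preserves semisimplicity. First I would invoke the cited result of the second-named author from \cite{PMartin1}, which under the hypothesis $ t \notin \{ 0,1,2, \ldots, 2k-2 \} $ guarantees that $ \ParAlg(t) $ is a semisimple finite-dimensional $\CC$-algebra. Since $\SpheAlg(t)$ has been defined precisely as $ e_k \ParAlg(t) e_k $ for the idempotent $ e_k $ introduced just below \eqref{dib10}, the statement we need is an instance of the general fact that for a semisimple finite-dimensional algebra $ A $ and an idempotent $ e \in A $, the corner algebra $ eAe $ is again semisimple.

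The cleanest route to this general fact is via the Jacobson radical: one has the identity $ J(eAe) = e J(A) e $, so if $ J(\ParAlg(t)) = 0 $ then $ J(\SpheAlg(t)) = 0 $ as well, which for a finite-dimensional algebra over $ \CC $ is equivalent to semisimplicity. Alternatively, by Artin-Wedderburn we may write $ \ParAlg(t) \cong \prod_{i} M_{n_i}(\CC) $, and a standard computation shows that $ e_k \ParAlg(t) e_k \cong \prod_{i} M_{m_i}(\CC) $ for certain $ m_i \le n_i $ (with the factors corresponding to $ m_i = 0 $ simply omitted), yielding again a direct product of matrix algebras. Either formulation immediately gives the conclusion.

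There is essentially no obstacle: once the theorem of \cite{PMartin1} is invoked, the result follows formally, and the only thing to be careful about is to note that $ \SpheAlg(t) $ is regarded as a unital algebra with unit $ e_k $ (and not as a non-unital subalgebra of $ \ParAlg(t) $), as was already observed right after the definition of $\SpheAlg$. The bulk of the work for the non-semisimple range $ t \in \{1, 2, \ldots, 2k-2\} $ is deferred to the later sections on decomposition numbers, so for the present statement a short derivation of the above kind suffices.
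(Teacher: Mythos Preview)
Your proposal is correct and follows essentially the same approach as the paper: both invoke the semisimplicity of $\ParAlg(t)$ from \cite{PMartin1} and then deduce semisimplicity of the corner algebra via the Jacobson radical. The paper spells out the inclusion $J(e_k\ParAlg(t)e_k)\subseteq e_kJ(\ParAlg(t))e_k$ by hand using the description of the simple $\SpheAlg(t)$-modules as the nonzero $e_kL$, whereas you cite the standard identity $J(eAe)=eJ(A)e$ (or Artin--Wedderburn) directly, but the substance is the same.
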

\begin{dem}
  Let $ \mathcal{J}_k $ and $ \mathcal{SJ}_k $ be the Jacobson radicals for $ \ParAlg(t) $ and
  $\SpheAlg(t)$, respectively. Then, by definition, $ a \in \mathcal{J}_k $ if and only if $ a L = 0 $ for all
  irreducible $ \ParAlg(t) $-modules, and similarly for $ \mathcal{SJ}_k $.

  \medskip Since $ t \notin \{ 0, 1, 2, \ldots, 2k -2 \} $ we have that $ \ParAlg(t) $ is semisimple, which by definition means that
  $ \mathcal{J}_k =0 $. On the other hand, it is known that the irreducible $\SpheAlg(t)$-modules are the nonzero
  $  e_k L $'s for $ L $ running over irreducible $ \ParAlg(t) $-modules, see (iv) of Theorem (4) of {\bf A1} of the appendix to \cite{Donkin}. 
  Suppose now that $ e_k a e_k \in \mathcal{SJ}_k$. Then $ e_k a e_k (e_k L)=0 $ and hence
  $ e_k a e_k  L=0 $ for all irreducible
  $ \ParAlg(t) $-modules $L$. But this means that $ e_k a e_k \in \mathcal{J}_k $ and so $ e_k a e_k =0 $, as claimed.
\end{dem}  

\medskip

In general, even when $  \ParAlg(t) $ is not semisimple, it is always a {\it cellular algebra} in the sense of \cite{GL},
as was shown in \cite{DW} and \cite{Xi}, 
and so $\SpheAlg(t)$
becomes a cellular algebra as well, since
it is an idempotent truncation of $ \ParAlg(t) $. 

\medskip
Let us give the details of this statement, starting with the definition of a cellular algebra from \cite{GL}. 
\begin{definition}{\label{cellular}} Suppose that $\mathcal{A}$ is a
  $\Bbbk $-algebra over the domain $ \Bbbk $. Suppose moreover that $(\PP,\leq)$ is a poset
  such that for each $\lambda\in \PP$ there is a
finite set $T(\lambda)$ and elements $C_{\s\T}\in \mathcal{A}$ such that
\begin{equation}
  \mathcal{C}=\left\{C_{\s\T}\mid \lambda\in \PP \mbox{ and } \s,\T\in T(\lambda)\right\}
\end{equation}  
is a $\Bbbk$-basis for $\mathcal{A} $. Then the pair $(\mathcal{C},\PP)$ is called a 
\textit{cellular basis} for $\mathcal{A}$ if
\begin{enumerate}\renewcommand{\labelenumi}{\textbf{(\roman{enumi})}}
\item The $\Bbbk$-linear map $*:\mathcal{A}\to \mathcal{A}$ determined by
$(\mathop{C_{\s\T}})^*=C_{\T\s}$ for all
$\lambda\in\PP$ and $\s,\T\in T(\lambda)$ is an algebra
anti-automorphism of $\mathcal A$.

\item For any $\lambda\in \PP,\; \T\in T(\lambda)$ and $a\in \mathcal{A}$
there exist elements $ r_{a \s \U} \in \Bbbk$ such that for all $\s\in T(\lambda)$
\begin{equation}\label{multicell}
  a C_{\s\T}  \equiv \sum_{\U\in T(\lambda)} r_{a \s \U}C_{\U\T} \mod{\mathcal{A}^{> \lambda}}
  \end{equation}
where $\mathcal{A}^{    >\lambda} $ is the free $\Bbbk$-submodule of $\mathcal{A}$, given by 
$\left\{C_{\U\V}\mid \mu\in \PP,\mu>\lambda \mbox{ and }
\U,\V\in T(\mu)\right\}$.
\end{enumerate}
If $\mathcal A$ has a cellular basis we say that it is a \textit{cellular
algebra} with \textit{cell datum} $(\PP,T,\mathcal{C})$.
\end{definition}

Suppose that $\mathcal A$ is a a cellular algebra with cell datum $(\PP,T,\mathcal{C})$.
With each $ \s \in T(\lambda) $ we associate a symbol $ C_{\s} $ and next define
$ \Delta(\lambda) $ as the free $  \Bbbk $-module with basis $ \{ C_{\s} \,| \, \s \in T(\lambda) \} $. 
Then $ \Delta(\lambda) $ becomes a left $ \mathcal A$-module, called the {\it cell module}, via
\begin{equation}
a C_{\s} = \sum_{\U\in T(\lambda)} r_{a \s \U}C_{\U}
\end{equation}  
where $ r_{a \s \U} $ is as in \eqref{multicell}. We shall call $ \{ C_{\s} \, | \, \s \in T(\lambda) \} $
the {\it cellular basis} for $ \Delta(\lambda) $.

\medskip
We now state the cell datum for $ \ParAlg(t)$, using a small variation of the
constructions given in \cite{DW} and \cite{Xi}. 
For $ \Lambda $ we use
\begin{equation}\label{poserpar}
\Lambda^k  = \bigcup_{l=0}^{k} \Par_l. 
\end{equation}
For the order relation
$ \unlhd $ on $ \Lambda^k $ we use the usual dominance order on each $ \Par_l $, and
extend it to all of $ \Lambda^k $ via $ \lambda \lhd \mu $ if $ \lambda \in \Par_l $
and $ \mu \in \Par_{\overline{l}} $ where $ l >\overline{l} $.
Suppose that $ \lambda \in \Par_l \subseteq \Lambda^k$.
Then for $ T(\lambda) $ we use $ T_k(\lambda ) = \std(\lambda) \times {\mathcal C}_l $ where $ {\mathcal C}_l $
is as in \eqref{2.19}. Thus, the elements of $ T_k(\lambda) $ are of the form  $ \cc = ( \s, c , S) $
where
$ \s \in \std(\lambda) $ for $ \lambda \in \Par_l$, and $ c $ is a set partition on $ \{ 1,2, \ldots, k \} $
with $ S$ being a subset of the blocks of $ c$, such that $ | S | = l $. 

Finally, in order to give the cellular basis itself,
we need to recall Murphy's {\it standard basis} for $ \CC \Si_l $.
For $ \lambda \in \Par_l $, we denote by $ \T^{\lambda} $ the {\it row reading} tableau that
was already used in the proof of Theorem
\ref{lemma2}. In $ \T^{\lambda} $, the numbers $ \{1, 2, 3, \ldots, l \} $
are filled in increasingly along the rows of $ \lambda$ and down the columns, for example
for $ \lambda = (5,3,2) $ we have 

\begin{equation}
\T^{\lambda} =  \raisebox{-.45\height}{\includegraphics[scale=0.7]{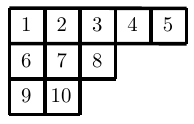}}
\raisebox{-15\height}{.} 
\end{equation}

Let $ \Si_{\lambda } \le \Si_l $ be the Young subgroup for $ \lambda $, 
that is the row stabilizer 
of $ \T^{\lambda} $, and define $ x_{\lambda \lambda } \in \CC \Si_l $ via 
$ x_{\lambda \lambda } = \sum_{ w \in \Si_{\lambda}} w $.
For $ \s \in \tab(\lambda) $, let $ d(\s) \in \Si_l $ be defined by the condition that $ d(\s) \T^{\lambda} = \s $, 
and for $ \s, \T \in \tab(\lambda) $ let $ x_{ \s \T} = d(\s) x_{\lambda \lambda } d(\T)^{-1} $. 
Then it was proved in \cite{Mathas} and \cite{Murphy}
that the set $ \{ x_{ \s \T} \, | \, \s, \T \in \std(\lambda), \lambda \in \Par_l \} $
is a cellular basis for $ \CC \Si_l $: Murphy's standard basis. 
(In fact, in \cite{Mathas} and \cite{Murphy} the authors work in the more general setting of
Hecke algebras of type $ A_{l-1}$).

\medskip
Let $ \II_l^{ \rhd \lambda} = \spa \{ x_{\s \T} \, | \,
\s, \T \in \std(\mu), \mu \rhd \lambda \} \subseteq \CC \Si_l $ be the {\it cell ideal}
in $ \CC \Si_l $ 
corresponding to $ \lambda $
and let $ x_{\s } = x_{ \s \T^{\lambda}} \, \, {\rm mod} \, \,  \II_l^{\rhd \lambda} \subseteq   \CC \Si_l/\II_l^{\rhd \lambda}
$. When $  \T^{\lambda} $ appears as a subscript, we sometimes
write $ \lambda $ instead of $ \T^{\lambda} $, 
for example $ x_{\s \lambda} =  x_{ \s \T^{\lambda} }  $ and $ x_{\lambda} =  x_{  \T^{\lambda} }  $.
Then the Specht module $ S(\lambda) $ for $ {\mathbb C}\Si_l $ is the submodule of $  {\mathbb C}\Si_l/ \II_l^{ \rhd \lambda} $ generated by $ x_{\lambda} $. It is the cell module associated with Murphy's standard basis and 
$ \{ x_{\s } \, | \, \s \in \std(\lambda) \} $
is a cellular basis for $ S(\lambda) $. 

\medskip
Returning to $  \ParAlg(t) $ we finally obtain 
its cellular basis. 
For $ \cc = (\s, c , S ) $ and $  \dd = (\T, d, T) $ in $  T_k(\lambda)$ we define 
$ C_{\cc \dd} \in  \ParAlg(t)  $ via 
\begin{equation} C_{\cc \dd}  = g \bigl ( (c, S) \otimes  x_{\s \T} \otimes (d, T) \bigr )
\end{equation}
where $ g $ is the isomorphism induced by $ f^{-1} $ for $ f $ as in \eqref{bijection}. 
Then $ \{ C_{\cc \dd} \, | \,  \cc, \dd \in T_k(\lambda) \, \,   \mbox{for} \, \, \lambda \in \Lambda^k \} $ is
the cellular basis for $ \ParAlg(t) $. 
A typical basis element $ C_{\cc \dd} $ has the diagrammatic form 

\begin{equation}\label{before dd}
C_{\cc \dd}  = \raisebox{-.47\height}{\includegraphics[scale=0.8]{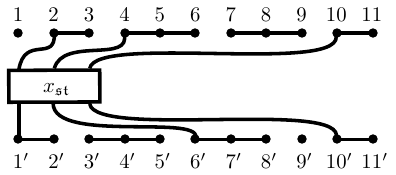}}
\raisebox{-20\height}{.} 
\end{equation}

For $ \lambda \in \Lambda^k $, we now give a description 
of the cell module $ \Delta_k(\lambda) $ for $ \ParAlg(t) $. For 
$ \lambda \in \Par_l \in \Lambda^k $ we  
let $ \dd_{\lambda} \in T(\lambda) $ be the element defined via
$ \dd_{\lambda} = ( \T^{\lambda}, d, T ) $ where $ T =  \{ \{1\}, \{2\}, \ldots, \{ l\} \}  $
and $ d =  \{ \{1\}, \{2\}, \ldots, \{ l\},   \{ l+1, l+2, \ldots, k \} \}  $. 
For $  \cc = (\s, c, S) \in T(\lambda) $ we set
\begin{equation}\label{6.8}
C_{\cc} = C_{\cc \dd_{\lambda}} \mod \ParAlg^{  \rhd \lambda}(t)
\end{equation}
where 
$ \ParAlg^{\rhd \lambda}(t) = {\rm span}\{ C_{ \cc \dd} \, | \, \cc, \dd \in T(\mu),  \mu \rhd \lambda \} $
and have then $ \Delta_k(\lambda) =   {\rm span} \{ C_{\cc} \, | \, \cc \in  T_k(\lambda) \} $.
Then, by definition, $ \Delta_k(\lambda) $ is the submodule of $ \ParAlg(t)/ \ParAlg^{\rhd \lambda}(t) $ generated by 
$ \{ C_{\cc} \, | \, \cc \in  T_k(\lambda) \} $.
We represent a typical basis element $  C_{\cc} $ for $ \Delta_k(\lambda) $ as a {\it half diagram} as follows 
\begin{equation}\label{6.9}
  C_{\cc} = \raisebox{-.24\height}{\includegraphics[scale=0.8]{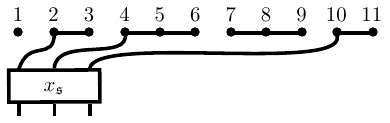}}
\end{equation}
thus leaving out $ \dd_{\lambda} $ from the diagram. 
The action of $ a \in \ParAlg(t) $ on $ C_{\cc} \in \Delta_k (\lambda) $,
that is $ a  C_{\cc} \in \Delta_k (\lambda)$, is given by concatenation
with $ a $ on top of $ C_{\cc} $, followed by the elimination of internal blocks as in $ \ParAlg(t)$, 
and of terms involving $ \{ C_{ \dd} \, | \, \dd \notin T_k(\lambda)  \} $ that are set equal to $ 0 $. 

\medskip
By construction we have 
\begin{equation}\label{bytheconstruction}
\dim \Delta_k(\lambda) = | T_k(\lambda) | = | \std(\lambda) | | C_l | 
\end{equation}
where $ C_l $ is as in \eqref{2.19}. This formula can be explicitly expressed in terms of Stirling numbers of the second kind, as explained in \cite{DW}.

{\color{black}{
\begin{example}\label{example 2}
  \normalfont
For the partitions 
$ ( k ) $ and $ (1^k)  $ in $ \Lambda^{k} $ we get via \eqref{bytheconstruction} that 
$ \dim \Delta_k(k) = \dim \Delta_k(1^k) = 1   $ and so in particular
$  \Delta_k(k) $ and $\Delta_k(1^k)$ are simple $\ParAlg(t)$-modules. 
Suppose that $ n \ge 2k $ such that 
$ \ParAlg(n) $ is semisimple by \cite{PMartin1}. 
Then explicit expressions for the primitive idempotents in $ \ParAlg(n) $
associated
with $ \Delta_k(k) $ and $ \Delta_k(1^k) $ were determined in \cite{BH} and \cite{Campbell}.
In the notation of \cite{Campbell}, these idempotents are the elements 
$ \overline{\textsf{Quasi}_k}$ and $\overline{\textsf{Alt}_k}  $ of $ \ParAlg(n) $. 
\end{example}
}}

We now pass to $\SpheAlg(t)$. 
With the preparations just made we are in position to formulate and prove the promised cellularity of $\SpheAlg(t)$. 
\begin{theorem}\label{cellularity}
  The spherical partition algebra $\SpheAlg(t) $ is cellular on the poset $ \Lambda^k $. 
The cell modules for $\SpheAlg(t) $ are $ \{ e_k  \Delta_k (\lambda) \, | \,  \lambda \in \Lambda^k \}$. 
\end{theorem}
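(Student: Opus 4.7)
The plan is to invoke the standard K\"onig--Xi idempotent truncation theorem (see Proposition 4.3 of \cite{KoXi}): if $A$ is a cellular algebra with anti-involution $*$ and $e \in A$ is an idempotent satisfying $e^* = e$, then $eAe$ is itself cellular, with the same poset, and its cell modules are $\{ e \Delta(\lambda) \}$. Since $\ParAlg(t)$ is cellular on the poset $\Lambda^k$ with cell modules $\Delta_k(\lambda)$ by the construction recalled above (from \cite{DW}, \cite{Xi}), applying this result to $e = e_k$ reduces the theorem to verifying the single hypothesis $e_k^* = e_k$.

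First I would identify the cellular anti-involution $*$ on $\ParAlg(t)$ with reflection of a set partition diagram through its horizontal axis. This follows from the explicit form of the cellular basis element in \eqref{before dd}: since Murphy's standard basis satisfies $x_{\s\T}^* = x_{\T\s}$ on $\CC \Si_l$, reflection of $C_{\cc\dd} = g\bigl((c, S) \otimes x_{\s\T} \otimes (d, T)\bigr)$ swaps the top and bottom half-diagrams and sends the middle symmetric-group piece to $x_{\T\s}$, yielding $C_{\cc\dd}^* = C_{\dd\cc}$ as required by Definition \ref{cellular}. Then, examining the embedding $\iota_k$ in \eqref{dib10}, reflection acts on permutation diagrams by inversion, so $\iota_k(\sigma)^* = \iota_k(\sigma^{-1})$. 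Because $\sigma \mapsto \sigma^{-1}$ is a bijection of $\Si_k$, we obtain
\[
e_k^* \; = \; \iota_k\!\left(\frac{1}{k!} \sum_{\sigma \in \Si_k} \sigma^{-1}\right) \; = \; \iota_k\!\left(\frac{1}{k!} \sum_{\sigma \in \Si_k} \sigma\right) \; = \; e_k.
\]

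With this verification in hand, the K\"onig--Xi theorem immediately produces the desired cell datum for $\SpheAlg(t) = e_k \ParAlg(t) e_k$ on $\Lambda^k$, with cell modules $e_k \Delta_k(\lambda)$. Here we understand that some of these modules may vanish; the identification of the nonzero ones as forming the subposet $\Lambda_{sph}^k \subseteq \Lambda^k$ is the separate task addressed in Corollary \ref{sphpar}. There is no genuine obstacle in this argument: the deep input (cellularity of $\ParAlg(t)$) is imported from the literature, and the compatibility of $e_k$ with $*$ is elementary. The substantive combinatorial work begins afterwards, in determining an explicit basis and a dimension formula for $e_k \Delta_k(\lambda)$.
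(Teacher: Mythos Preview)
Your proposal is correct and follows essentially the same route as the paper: both reduce to verifying $e_k^* = e_k$ and then invoke Proposition~4.3 of \cite{KoXi}. The only cosmetic difference is in that verification: the paper observes directly that $e_k = \tfrac{1}{k!} C_{\cc\cc}$ for a specific $\cc$ (with $\lambda=(k)$, $\s=\T^{\lambda}$, $d=S=\{\{1\},\ldots,\{k\}\}$), so $e_k^*=e_k$ is immediate from $C_{\cc\cc}^*=C_{\cc\cc}$, whereas you identify $*$ with diagram reflection and use that the symmetrizer is inversion-invariant.
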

\begin{dem}
  Defining $ \cc = (\s, d , S) \in T(\lambda) $ where $ \lambda =(k) $, 
  $ \s= \T^{\lambda} $ and $ d = S=  \{ \{1 \}, \{2 \}, \ldots, \{k \} \} $, we have 
  $ e_k = \frac{1}{k!}C_{ \cc \cc} $. From this it follows that $ e_k^{\ast} = e_k^{} $ and so we may apply 
Proposition 4.3 of \cite{KoXi}. This proves the Lemma. 
\end{dem}

\medskip
Note that Proposition 4.3 of \cite{KoXi} does not give rise to a basis for $ e_k  \Delta_k (\lambda) $
and in fact our next goal is to construct such a basis. 

\medskip
For this we need several new notational ingredients. 
Suppose first that $ \nu= (\nu_1^{a_1}, \nu_2^{a_2}, \ldots, \nu_p^{a_p} ) \in \Par_i $.
We then define the function 
\begin{equation}
\Psi: \Par_i \rightarrow \Par,  \Psi(\nu) = \ord(a_1, a_2, \ldots, a_p )
\end{equation}
which may be considered as a variation of  
the function $ \Phi$ defined in 
\eqref{weobtainafunction}. 
Define also $ p_i = | \Par_i| $; this is just the classical partition function. 

\medskip
Suppose that $ \s $ is a semistandard $ \lambda $-tableau of type $ \mu $. Following
section 7 in \cite{Murphy}, we now set
\begin{equation}
  x_{\s } = \sum_{ \substack{  w  \in \Si_{\mu}  \\ w \T^{\lambda } \in \Std(\lambda) }}   x_{w \T^{\lambda }} \in S(\lambda). 
\end{equation}  
For example, for $\s = \raisebox{-.45\height}{\includegraphics[scale=0.6]{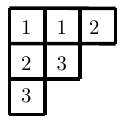}} $
we have
\begin{equation}\label{asin}
  x_{\s} = x_{\raisebox{-.45\height}{\includegraphics[scale=0.6]{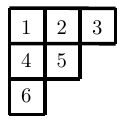}}} +
  x_{\raisebox{-.45\height}{\includegraphics[scale=0.6]{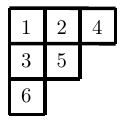}}} +
  x_{\raisebox{-.45\height}{\includegraphics[scale=0.6]{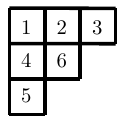}}} +
  x_{\raisebox{-.45\height}{\includegraphics[scale=0.6]{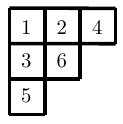}}}.
\end{equation}

Moreover, for any $ \tau \in  \Comp_i $ we define $ d_{\tau} \in \SetPar_i $ as the set partition 
whose blocks are the rows of $ \T^{\tau} $. For example, if $ \tau= (3,2,1,3) $
we get $ d_{\tau} = \{ \{ 1,2,3 \}, \{ 4,5\}, \{6\}, \{7,8,9\}  \}$. 

\medskip
Suppose now that $ \lambda \in \Par_l \subseteq \Lambda^k$ and that $ \nu \in \Par_i $ with 
$ \Psi(\nu) \in \Par_l $ for $ l \le i \le k $. Suppose furthermore that $ \s $ is a semistandard $ \lambda $-tableau of
type $  \Psi(\nu) $ and that $ \mu \in \Par_{k-i}$. Using this information we define an element
$ x_{\nu, \s, \mu} \in e_k \Delta_k(\lambda) $ as follows
\begin{equation}\label{semistandardbasis}
x_{\nu, \s, \mu}  = e_k g \bigl ( ( d_{ \nu \cdot \mu}, d_{\nu}) \otimes  x_{\s} \otimes \dd_{\lambda}  \bigr )
\end{equation}  
where $ \dd_{\lambda} $ is as below \eqref{before dd} and
$ g $ is the isomorphism induced by $ f^{-1} $ for $ f $ as in \eqref{bijection}. 
For example, for $ k = 17 $, $ l = 6$, $ \nu = (3^2,2^2, 1^2) $, $ \mu = (2^2, 1) $ and $ \lambda$ and $ \s $ as
in \eqref{asin}, we have 

\begin{equation} x_{\nu, \s, \mu} = e_{17} \left( \raisebox{-.45\height}{\includegraphics[scale=0.8]{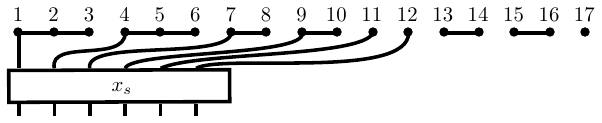}}\right)
\raisebox{-20\height}{.} 
\end{equation}

\medskip
With this notation we can now state and prove the following Theorem. 
\begin{theorem}\phantomsection\label{stateandproveA}
  \begin{description}
  \item[a)] 
    Let $ \lambda \in \Par_l \subseteq \Lambda^k $. Then the set 
    \begin{equation}\label{4.13}
      {\cal B}_{\lambda} = \{ x_{\nu, \s, \mu} \, | \, \nu \in \Par_i  \mbox{ for }   l \le i \le k \mbox{ such that } \Psi(\nu) \in \Par_l, \, 
\s \in \sstd(\lambda, \Psi(\nu)), \, \mu \in \Par_{k-i} \}
    \end{equation}
    is a cellular basis for $ e_k \Delta_k(\lambda) $. 
\item[b)] Suppose that $ \lambda \in \Par_l \subseteq \Lambda^k$. Then we have the
  following dimension formula
  \begin{equation}\label{48}
    \dim e_k \Delta_k(\lambda) =
    \sum_{i=l}^k \sum_{\substack{  \nu \in \Par_i  \\  \Psi(\nu) \in \Par_l }} K_{\lambda,  \Psi(\nu)} p_{{k-i}}
      \end{equation}    
where $ K_{\lambda,  \Psi(\nu)} $ is the Kostka number. 
  \end{description}
\end{theorem}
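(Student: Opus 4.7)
\medskip
\noindent\textit{Proof plan.} Part (b) is immediate from (a) by counting: $|\sstd(\lambda, \Psi(\nu))| = K_{\lambda, \Psi(\nu)}$ by definition, and $|\Par_{k-i}| = p_{k-i}$. So the plan is to focus on (a). Since $\Delta_k(\lambda)$ has cellular basis $\{C_\cc : \cc \in T_k(\lambda)\}$, the space $e_k\Delta_k(\lambda)$ is spanned by the elements $e_k C_\cc$. Left multiplication by $\sigma \in \iota_k(\Si_k)$ acts on the half-diagram \eqref{6.9} by permuting its top $k$ vertices, and so $e_k C_\cc$ depends only on the $\Si_k$-orbit of $\cc = (\s, c, S)$ for the action on the top.

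These orbits split into two layers. At the diagrammatic layer, the orbit of $(c, S)$ under $\Si_k$ is classified by the multiset of sizes of propagating blocks together with the multiset of sizes of non-propagating blocks. The former is a multiset of $l$ positive integers summing to some $i$ with $l \le i \le k$, hence a partition $\nu \in \Par_i$ with $\ell(\nu) = l$, equivalently $\Psi(\nu) \in \Par_l$; the latter is a partition $\mu \in \Par_{k-i}$. After choosing the canonical representative $(d_{\nu\cdot\mu}, d_\nu)$ for this orbit, the residual symmetry is exactly the Young subgroup $\Si_{\Psi(\nu)} \le \Si_l$ that permutes propagating blocks of equal size. At the tableau layer, this residual $\Si_{\Psi(\nu)}$-action translates, via the identification of propagating blocks with rows of $\T^\lambda$ encoded by $\s$, into a row-type action on Murphy's standard basis of $S(\lambda)$. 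Invoking the identity recalled around \eqref{asin}, the orbit sum $\sum_{w \in \Si_{\Psi(\nu)},\, w\T^\lambda \in \Std(\lambda)} x_{w\T^\lambda}$ is precisely $x_\s$ for $\s \in \sstd(\lambda, \Psi(\nu))$. Consequently $e_k C_\cc$ expands as a linear combination of the $x_{\nu,\s,\mu}$, showing that $\mathcal{B}_\lambda$ spans $e_k \Delta_k(\lambda)$.

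For linear independence, triples with distinct $(\nu, \mu)$ contribute linearly independent summands because they live on disjoint $\Si_k$-orbits of the underlying $(c, S)$ data, hence on linearly independent portions of $\Delta_k(\lambda)$ under the bijection $f$ of \eqref{bijection}. For fixed $(\nu, \mu)$, the elements $\{x_\s : \s \in \sstd(\lambda, \Psi(\nu))\}$ form Murphy's semistandard basis of $S(\lambda)$ and are therefore linearly independent, and this independence is transported through $f$ into linear independence of $\{x_{\nu,\s,\mu}\}_\s$. Verifying compatibility of the cellular structure on $\SpheAlg(t)$ with the one inherited from $\ParAlg(t)$ then confirms that $\mathcal{B}_\lambda$ is a cellular basis of $e_k \Delta_k(\lambda)$.

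The main obstacle lies in the second layer: one must carefully identify the residual $\Si_{\Psi(\nu)}$-symmetrization produced by left multiplication by $e_k$ with the row-permutation averaging on Murphy's standard basis that yields the semistandard basis element $x_\s$. In particular, one needs to check that, after fixing the canonical representative of the $(c, S)$-orbit, the way $e_k$ identifies propagating blocks of equal size matches the composition $\Psi(\nu)$ rather than some other reordering, and that the normalizing factor $\tfrac{1}{k!}$ in $e_k$ combines correctly with the sizes of the stabilizers of $(d_{\nu\cdot\mu}, d_\nu)$ and the relevant inner Young subgroup so as to produce $x_\s$ without extra rational coefficients. Once this bookkeeping is in place, the remainder of the argument is routine diagrammatics combined with cellular algebra formalism.
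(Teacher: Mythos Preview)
Your proposal is correct and follows essentially the same strategy as the paper: decompose $e_k\Delta_k(\lambda)$ via $\Si_k$-orbits on the $(c,S)$-data (parametrized by pairs $(\nu,\mu)$), identify the residual symmetry on the Specht-module factor as a Young subgroup of $\Si_l$, and invoke Murphy's semistandard basis. The paper resolves the bookkeeping obstacle you flag by recasting the problem as the identification of the right $\CC\Si_l$-module $M(c,S)=e_k\, g\bigl((c,S)\otimes\CC\Si_l\otimes\dd_\lambda\bigr)$ with the permutation module $M(\Psi(\nu))\cong x_{\Psi(\nu)\Psi(\nu)}\CC\Si_l$, carried out via the distinguished-representative description of the classes $(\s,\T)_{\sim_l}$; this is exactly what pins down the composition as $\Psi(\nu)$ and shows that the relevant subspace of $S(\lambda)$ is $x_{\Psi(\nu)\Psi(\nu)}S(\lambda)$ with Murphy's basis $\{x_\s:\s\in\sstd(\lambda,\Psi(\nu))\}$.
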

\begin{dem}
  The right{\color{black}{-}}hand side of \eqref{48} is just the cardinality of $ {\cal B}_{\lambda}$ from ${ \bf a)} $ and so we 
only have to show ${ \bf a)} $. 

\medskip
For this we first recall the set ${\mathcal C}_l$ defined in \eqref{2.19}. 
For $ (c, S) \in  {\mathcal C}_l $ we define
\begin{equation}
  M(c,S) = e_k g \left( (c,S) \otimes {\mathbb C} \Si_l \otimes \dd_{\lambda} \right). 
\end{equation}
We consider $ M(c,S) $ as a right $ {\mathbb C} \Si_l $-module, with action
coming from the right $ \Si_l $-multiplication in the factor $ {\mathbb C} \Si_l $ of $   M(c,S) $. 
For the special element $ e_k g( (c,S) \otimes  1 \otimes \dd_{\lambda} ) \in   M(c,S)$ we let 
$ ( \s, \T )_{\sim_l} $ be the equivalence class of pairs corresponding to $ g( (c,S) \otimes  1 \otimes \dd_{\lambda} ) $ 
under the bijection described in the paragraphs from \eqref {2.21} to \eqref{2.22}. 
The $ \Si_k $-left action on these classes is
faithful and transitive and 
so in the expansion of $ e_k g( (c,S) \otimes  1 \otimes \dd_{\lambda} )  $ there is 
a class represented by a distinguished pair $ ( \s_1, \T^{ (1^l, k-l )}) $ satisfying that the numbers $ \{1, 2, \ldots, k \} $ below the red
line of $ \s_1 $ are all bigger than the numbers above the red line. Moreover, the numbers above the red line of $ \s_1 $ are filled in along rows,
starting with the longest row, followed by the second longest row and so on, and similarly for the numbers below the red line.
In the case of rows of equal lengths, the numbers are filled in
along these rows starting with top one and finishing with the bottom one. Below we give an example of 
$ ( \s, \T^ {  (1^l, k-l) } )_{\sim_l}  $ and its distinguished representative
$ ( \s_1, \T^{  (1^l, k-l) } )  $. 

\begin{equation}\label{distinguished}
     \left( \raisebox{-.5\height}{\includegraphics[scale=0.8]{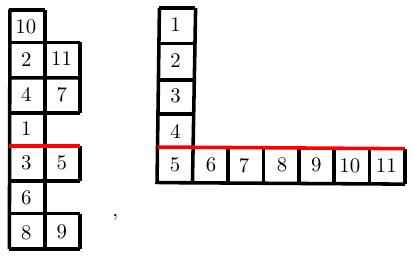}}\right)_{\! \! \! \sim 4} \, \, 
\raisebox{-40\height}{,} 
  \, \,  \, \,  \, \,  \, \,  \, \,  \, \, 
     \left( \raisebox{-.5\height}{\includegraphics[scale=0.8]{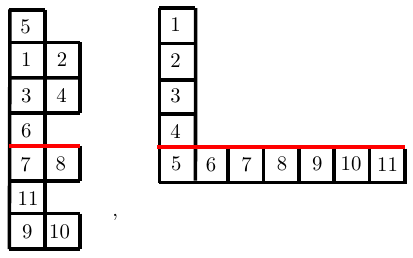}}\right)
  \end{equation}

On the other hand, under the bijection described in the paragraphs from \eqref {2.21} to \eqref{2.22}, the $ \Si_l $-action
on $M(c,S) $
is given by 
row permutations of the top $ l $ rows of the first component of the classes $ ( \s, \T )_{\sim l}  $,
Using this and the description of the distinguished representative for $ ( \s, \T^{  (1^l, k-l) }  )_{\sim_l}  $
just obtained, 
we conclude that $   M(c,S)  $ is isomorphic to the right 
$ {\mathbb C} \Si_l $-permutation module given by $ \Psi(\nu) $, that is $ M( \Psi(\nu) ) \cong
x_{ \Psi(\nu)  \Psi(\nu)} {\mathbb C} \Si_l   
$ where $ \nu =  \ord( \shape({\s_1}_{|1, \ldots, l})) $
for $ {\s_1}_{|1, \ldots, l} $ the restriction of $ \s_1 $ to the first $ l $ rows.

\medskip
We now recall the fact, shown in
\cite{Murphy}, that 
the set $ \{ x_{\s} \, | \, \s \in \sstd(\lambda,  \Psi(\nu)) \} $ is 
a basis for $ x_{ \Psi(\nu)  \Psi(\nu)} S(\lambda) $. Finally taking into account $ \mu =
\ord( \shape({\s_1}_{|l+1, \ldots })) $, where
$ {\s_1}_{|l+1, \ldots } $ is the restriction of $ \s_1 $ to the rows below the red line,
we arrive at the basis given in \eqref{4.13}, which shows that $ {\cal B}_{\lambda}$ indeed is a basis for $e_k \Delta_k(\lambda) $. 

\medskip
Finally, since we already know that the $ e_k \Delta_k(\lambda) $'s are the cell modules for the cellular algebra
$ \SpheAlg(t) $, we get that $ {\cal B}_{\lambda} $ is even a cellular basis for $e_k \Delta_k(\lambda) $. 
{\color{black}{This concludes our proof.}}
\end{dem}

\medskip
\medskip

By cellularity of $\SpheAlg(t) $ we have 
$ \dim  \SpheAlg(t) = \sum_{ \lambda \in \Lambda^k } (\dim e_k  \Delta_k(\lambda) )^2 $, which 
via Theorem \ref{lemma2} and Theorem \ref{stateandproveA} becomes the following 
identity involving $ bp_k $
\begin{equation}\label{415}
  bp_k = \sum_{ \lambda \in \Par_l \subseteq \Lambda^k }\biggl (   \sum_{i=l}^k \sum_{\substack{  \nu \in \Par_i  \\  \Psi(\nu) \in \Par_l }} K_{\lambda,  \Psi(\nu)} p_{{k-i}}
  \biggr)^2. 
\end{equation}
It may be surprising that the identity \eqref{415} can in fact be proved 
with combinatorial tools, as we shall now briefly explain.

\medskip
Fix $\nu \in \Par_i $, $  \mu \in \Par_j $ such that $ \Psi(\nu),  \Psi(\mu) \in \Par_l  $ for
some $ l \in \{0, 1, \ldots, k \} $ 
and consider their contribution to \eqref{415}, that is 
\begin{equation}\label{thissum}
   \sum_{ \lambda \in \Par_l  } K_{\lambda,  \Psi(\mu)} K_{\lambda,  \Psi(\nu)}. 
\end{equation}
The sum in \eqref{thissum} has a
combinatorial interpretation, which is a consequence of the RSK algorithm.

\medskip
Indeed, 
let $ {\mathcal N }_{ \Psi(\mu), \Psi(\nu) } $ be the set of non-negative integer valued matrices 
with row sum $ \Psi(\mu) $ and column sum $  \Psi(\nu)  $.
For example, if $ \mu = ( 2^3, 1^2) $ and $ \nu = (3^2, 2^2, 1)  $ we have 
$ \Psi(\mu) = (3,2) $ and $ \Psi(\nu) = (2,2,1) $ and then ${\mathcal N }_{ \Psi(\mu), \Psi(\nu) } $ 
consists of the matrices
\begin{equation}\label{fivematrices}
\begin{bmatrix*}[r]
  1 & 1 & 1  \\ 1 & 1 & 0  \end{bmatrix*}, \, \, \, 
\begin{bmatrix*}[r]
  0 & 2 & 1  \\ 2 & 0 & 0  \end{bmatrix*}
, \, \, \, 
\begin{bmatrix*}[r]
2 & 0 & 1  \\ 0 & 2 & 0  \end{bmatrix*}
, \, \, \, 
\begin{bmatrix*}[r]
  1 & 2 & 0  \\ 1 & 0 & 1  \end{bmatrix*}
, \, \, \, 
\begin{bmatrix*}[r]
2 & 1 & 0  \\ 0 & 1 & 1  \end{bmatrix*}. 
\end{equation}  
With this notation we have the following formula for \eqref{thissum}, 
see for example Corollary 7.13.2 in \cite{Stanley}
%% {\color{black}{\sout{or 
%% Wikipedia
%% \url{https://en.wikipedia.org/wiki/Robinson-Schensted-Knuth_correspondence}.}}}
\begin{equation}
 \sum_{ \lambda \in \Par_l  } K_{\lambda,  \Psi(\mu)} K_{\lambda,  \Psi(\nu)}  = | {\mathcal N }_{ \Psi(\mu), \Psi(\nu) } | . 
\end{equation}
Now each matrix in ${\mathcal N }_{ \Psi(\mu), \Psi(\nu) } $ corresponds to the propagating part of
an element of $ \BiPar_k $, in the normal form $ GG(b) $ given by Garsia and Gessel, as in \eqref{gg},
with the entries of the matrix giving the number of propagating lines that connect equally sized parts. 
For example, for $ \mu $ and $ \nu $ as above, the five matrices in
${\mathcal N }_{ \Psi(\mu), \Psi(\nu) } $ given by \eqref{fivematrices} correspond to the diagrams
\begin{equation}
\begin{array}{lll}
\raisebox{-.24\height}{\includegraphics[scale=0.8]{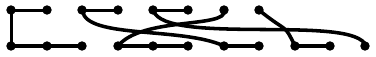}}, & 
\raisebox{-.24\height}{\includegraphics[scale=0.8]{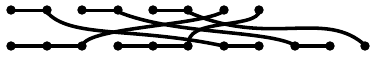}}, &
\raisebox{-.24\height}{\includegraphics[scale=0.8]{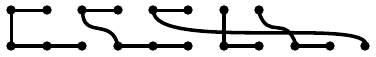}}, \\ && \\
\raisebox{-.24\height}{\includegraphics[scale=0.8]{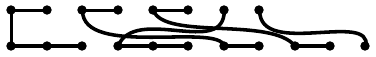}}, & 
\raisebox{-.24\height}{\includegraphics[scale=0.8]{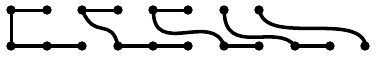}} &
\end{array} 
\end{equation}  
in {\color{black}{the specified}}
order. Using this, and taking into the account the possibilities for the non-propagating part,
we obtain our combinatorial proof of the identity \eqref{415}.

\medskip

We next draw a couple of consequences of Theorem \ref{stateandproveA}. We first
define $ \Lambda_{sph}^{k} \subseteq \Lambda^k $ via
\begin{equation}\label{concretelydefined2}
  \Lambda_{sph}^{k} = \{ \lambda = (\lambda_1, \lambda_2, \ldots, \lambda_l )  \in \Lambda^k \, | \,  \overline{b}(\lambda) 
\le k   \}
\end{equation}
where
\begin{equation} \overline{b}(\lambda) =  \sum_{i=1}^l i \lambda_i.
\end{equation}
This definition 
should be contrasted with the definition of $\ParSph $ in
\eqref{defineLambaSph}. We get
\begin{corollary}\label{sphpar}
With the above notation we have $ e_k \Delta_k(\lambda) \neq 0 $ if and only if $ \lambda \in \Lambda_{sph}^{k}$. 
\end{corollary}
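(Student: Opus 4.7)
The plan is to extract the corollary directly from the dimension formula
\[ \dim e_k \Delta_k(\lambda) = \sum_{i=l}^k \sum_{\substack{\nu \in \Par_i \\ \Psi(\nu) \in \Par_l}} K_{\lambda,\Psi(\nu)}\, p_{k-i} \]
of Theorem \ref{stateandproveA}. Since $p_{k-i}\ge 1$ for every $i\le k$ and all terms are nonnegative, the sum is nonzero if and only if there exist an integer $i\in[l,k]$ and a partition $\nu\in\Par_i$ with $\Psi(\nu)\in\Par_l$ and $K_{\lambda,\Psi(\nu)}\neq 0$. Using the classical fact that, for two partitions of the same integer, $K_{\lambda,\mu}\neq 0$ if and only if $\mu\unlhd\lambda$, the nonvanishing condition becomes: there exists $\mu\in\Par_l$ with $\mu\unlhd\lambda$ which is of the form $\Psi(\nu)$ for some $\nu\in\Par_i$ with $l\le i\le k$.

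The next step is to compute, for a fixed $\mu = (\mu_1, \ldots, \mu_p) \in \Par_l$, the minimum of $|\nu|$ over all $\nu$ with $\Psi(\nu) = \mu$. Writing $\nu = ({\nu'_1}^{a_1}, \ldots, {\nu'_p}^{a_p})$ with $\nu'_1 > \cdots > \nu'_p > 0$ and $(a_1,\ldots,a_p)$ a permutation of the parts of $\mu$, one has $|\nu| = \sum_j a_j \nu'_j$. By the rearrangement inequality (since $\nu'$ is strictly decreasing), this sum is minimized when $(a_j)$ is arranged in increasing order, i.e.\ $a_j = \mu_{p-j+1}$; and by the strict decrease, the minimal admissible values are $\nu'_j = p-j+1$. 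A short reindexing gives
\[ \min\{|\nu| : \Psi(\nu) = \mu\} = \sum_{j=1}^{p} j\,\mu_j = \overline{b}(\mu).\]
Conversely, every $i \ge \overline{b}(\mu)$ with $i \le k$ is admissible (one can, for instance, enlarge $\nu'_1$). Thus the nonvanishing condition reduces to: there exists $\mu \in \Par_l$ with $\mu \unlhd \lambda$ and $\overline{b}(\mu) \le k$.

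The final step is to observe that $\overline{b}$ is monotone with respect to dominance on $\Par_l$, namely if $\mu \unlhd \lambda$ then $\overline{b}(\mu) \ge \overline{b}(\lambda)$. This is immediate from the identity $\overline{b}(\mu) = b(\mu) + l$, where $b(\mu) = \sum_i (i-1)\mu_i = n(\mu)$ is the classical statistic which is known to reverse the dominance order (see e.g.\ Macdonald, Chapter I). Hence the infimum of $\overline{b}(\mu)$ over $\mu\unlhd\lambda$ in $\Par_l$ is attained at $\mu=\lambda$ itself, and the nonvanishing condition becomes simply $\overline{b}(\lambda) \le k$, i.e.\ $\lambda \in \Lambda_{sph}^k$. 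The only nontrivial technical point is the rearrangement computation identifying $\overline{b}(\mu)$ as the minimum size; once this is in hand, the monotonicity of $b$ closes the argument.
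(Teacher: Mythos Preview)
Your proof is correct and follows essentially the same route as the paper's. Both directions ultimately rest on the same two ingredients: (i) a rearrangement argument identifying $\overline{b}(\mu)$ as $\min\{|\nu|:\Psi(\nu)=\mu\}$, and (ii) the fact that $\mu\unlhd\lambda$ implies $\overline{b}(\mu)\ge\overline{b}(\lambda)$. The paper carries out (i) directly inside its backward implication, and proves (ii) by a hands-on semistandard tableau argument (replacing each entry by its row index), whereas you cite the classical monotonicity of $n(\lambda)=b(\lambda)$ from Macdonald. For the forward direction the paper simply writes down the explicit minimizer $\nu=(p^{\lambda_p},(p-1)^{\lambda_{p-1}},\ldots,1^{\lambda_1})$, which is exactly the $\nu$ your rearrangement computation produces.

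One small correction: your parenthetical remark that ``every $i\ge\overline{b}(\mu)$ with $i\le k$ is admissible'' is not true in general (for instance, if $\mu=(2,2)$ then $\overline{b}(\mu)=6$ but no $\nu$ with $\Psi(\nu)=\mu$ has $|\nu|=7$, since $|\nu|$ must be even). Fortunately this claim is never used: you only need that the minimum equals $\overline{b}(\mu)$, so you can simply delete the sentence.
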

\begin{dem}
  If $ \lambda \in \Lambda_{sph}^{k} $ we consider $ \nu = (l^{\lambda_l}, (l-1)^{\lambda_{l-1}}, \ldots, 1^{\lambda_1} ) $. Then
  $ | \nu |  \le k $ and $ \Psi(\nu )= \lambda $ and so $ K_{\lambda  \Psi(\nu ) } =  K_{\lambda  \lambda } \neq  0 $
  which implies $ e_k \Delta_k(\lambda) \neq 0$, by Theorem \ref{stateandproveA}.
\medskip
  
Suppose now that $ e_k \Delta_k(\lambda) \neq 0$. Then, by Theorem \ref{stateandproveA}, we have $ K_{\lambda \Psi(\nu)} \neq 0 $ for some
partition 
$ \nu  $ with $ | \nu |  \le k $, which implies $ \lambda \unrhd \Psi(\nu) $. Let $ \nu = ( \nu_1^{ a_1}, \nu_2^{ a_2}, \ldots, \nu_l^{ a_l} ) $
where $ \nu_1 > \nu_2 > \cdots > \nu_l $ and suppose that $ \ord(a_1, a_2, \ldots, a_l ) = (b_1, b_2, \ldots, b_l ) $, in other words
$ \Psi(\nu) = (b_1, b_2, \ldots, b_l ) $. Then from $ | \nu |  \le k $ we get 
\begin{equation}\label{which is less than}
\nu_1 a_1 + \nu_2 a_2 + \ldots + \nu_l a_l \le k \Longrightarrow  \nu_1 b_l + \nu_2 b_{l-1} + \ldots + \nu_l b_1 \le k
\Longrightarrow l  b_l  + (l-1) b_{l-1}  + \ldots + 1 b_1 \le k.
\end{equation}
Let now $ \T $ be the semistandard $ \lambda$-tableau of type $ \Psi(\nu) $ that exists
because $ K_{\lambda \Psi(\nu)} \neq 0 $. In $ \T $ the number 1 appears $ b_1 $ times, the number 2 appears $ b_2 $ times etc, 
and so the sum of the numbers appearing in $ \T $ is
$ 1 b_1 + 2 b_2 + \ldots + l b_l  $ which is less than $ k $ by
\eqref{which is less than}.
Let now $ \s $ be the semistandard $ \lambda $-tableau that
is obtained from $ \T $ by replacing each number in $\T $ by the row index of its node. 
The numbers in the $i^{\color{black} \rm th} \,$ row of $ \T $ cannot be strictly less than $ i$, and so also the sum of the numbers in $ \s $ is
smaller than $ k $. On the other hand, $ \s $ is the unique semistandard $\lambda$-tableau of type $ \lambda$ that
has 1 in the nodes of the first row, 2 in the nodes of the second row, etc, and therefore the sum of
numbers in $ \s $ is $ \overline{b}(\lambda) $. This proves the Corollary. 
\end{dem}

%% {\color{black}{
%% \begin{example}\label{example 3}
%%   \normalfont
%% For the partitions 
%% $ ( k ) $ and $ (1^k)  $ considered in Example \ref{example 2},
%% we get via \eqref{concretelydefined2} that $ ( k ) \in \Lambda_{sph}^{k} $
%% but $ ( 1^k ) \notin \Lambda_{sph}^{k} $ if $ k\ge 2 $, or equivalently 
%% $ e_k \overline{\textsf{Quasi}_k}  \neq 0 $ but $e_k \overline{\textsf{Alt}_k} =0 $. This result 
%% can also be obtained directly from the expressions for
%% $ \overline{\textsf{Quasi}_k} $ and $ \overline{ \textsf{Alt}_k } $ found in \cite{BH} and \cite{Campbell}.
%% \end{example}
%% }}

\medskip

It follows from the Corollary that $  \Lambda_{sph}^{k}  $ is a natural parametrizing index set for the representation
theory of $ \SpheAlg(t) $.
Let $ \mathcal A $ be a cellular algebra with cell datum $(\PP,T,\mathcal{C})$ 
as in Definition \ref{cellular} and let $ \{ \Delta(\lambda) \, | \,  \lambda \in \Lambda \} $ be the associated set of cell modules. 
Each $ \Delta(\lambda) $ is endowed with a $ \Bbbk $-valued bilinear form $ \langle \cdot, \cdot \rangle_{\lambda} $
which is important for the representation theory of $ \mathcal A $. To explain $ \langle \cdot, \cdot \rangle_{\lambda} $
one first chooses arbitrarily $ \T_0 \in T(\lambda) $. For basis elements $ C_{\s}, C_{\T} \in \Delta(\lambda) $ 
one considers the expansion of $ C_{\T_0 \T} C_{ \s \T_0 }  $ in the cellular basis for $ \mathcal A$ and then defines
\begin{equation}\label{intheaboveexpansion}
\langle C_{\s}, C_{\T} \rangle_{\lambda}= \coeff_{  C_{\T_0 \T_0 }}(C_{\T_0 \T} C_{ \s \T_0 })
\end{equation}
where $ \coeff_{  C_{\T_0 \T_0 }}(C_{\T_0 \T} C_{ \s \T_0 }) $ is the coefficient of
$ C_{\T_0 \T_0}  $ in the above expansion.

\medskip
Suppose now that $ \Bbbk $ is a field. We define $ \rad(\lambda) =
\{ v \in \Delta(\lambda) \, | \, \langle v , w \rangle_{\lambda} =0 \mbox{ for all } w \in \Delta(\lambda) \} $.
Then $  \rad(\lambda) $ is a submodule of $ \Delta(\lambda) $ and moreover, by the general theory of cellular algebras
developed in \cite{GL}, the quotient module $ L(\lambda) = \Delta(\lambda)/\rad(\lambda) $ is either zero or irreducible,
and the set of nonzero $ L(\lambda)$'s forms a complete set of isomorphism classes for the irreducible $ \mathcal A$-modules.

\medskip
We get the following Theorem. 

\begin{theorem}\label{classiirre}
  Suppose that $ t\notin \{0,1,2, \ldots, 2k-2 \} $. Then $ \SpheAlg(t) $ is semisimple and
  $ \{ e_k \Delta_k(\lambda) \, | \,  \lambda \in \Lambda_{sph}^{k} \} $ is a complete set of representatives for the isomorphism classes 
  of irreducible $ \SpheAlg(t) $-modules. 
\end{theorem}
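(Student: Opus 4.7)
The plan is to combine three ingredients already assembled in the paper. First, Theorem \ref{semisimple} directly yields that $\SpheAlg(t)$ is a semisimple $\CC$-algebra under the hypothesis $t\notin\{0,1,\ldots,2k-2\}$, so there is nothing further to prove for that half of the statement. Second, Theorem \ref{cellularity} makes $\SpheAlg(t)$ into a cellular algebra on the poset $\Lambda^k$ with cell modules $\{e_k \Delta_k(\lambda) \mid \lambda \in \Lambda^k\}$. Third, Corollary \ref{sphpar} identifies the nonzero cell modules as precisely those indexed by $\lambda \in \Lambda_{sph}^k$.

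With these three facts in hand, the argument is essentially a direct appeal to the Graham--Lehrer theory of cellular algebras. For any cellular algebra over a field, the complete list of pairwise non-isomorphic simple modules is $\{L(\lambda) = e_k\Delta_k(\lambda)/\rad(\lambda) \mid L(\lambda) \neq 0\}$, where $\rad(\lambda)$ is the radical of the bilinear form $\langle\cdot,\cdot\rangle_\lambda$ introduced in \eqref{intheaboveexpansion}. By the standard Graham--Lehrer equivalence \cite{GL}, semisimplicity of a cellular algebra is equivalent to $\rad(\lambda)=0$ for every $\lambda$ with $e_k\Delta_k(\lambda)\neq 0$. Applying this to $\SpheAlg(t)$ yields $L(\lambda) = e_k\Delta_k(\lambda)$ whenever the right-hand side is nonzero.

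Substituting the characterisation furnished by Corollary \ref{sphpar} then restricts the labelling set from $\Lambda^k$ down to $\Lambda_{sph}^k$, giving exactly the list asserted in the theorem. I do not anticipate a serious obstacle; the entire proof reduces to bookkeeping once the three cited results are invoked. The only point worth double-checking is that the bilinear form on $e_k \Delta_k(\lambda)$ implicit in the Graham--Lehrer criterion is indeed the one coming from the cellular structure on $\SpheAlg(t)$ produced in Theorem \ref{cellularity} via Proposition 4.3 of \cite{KoXi}, but this is automatic from the construction of cell modules via idempotent truncation.
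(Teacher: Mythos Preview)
Your proposal is correct and follows essentially the same route as the paper: invoke Theorem \ref{semisimple} for semisimplicity, then apply the Graham--Lehrer theory (the paper cites Theorem 3.8 of \cite{GL} directly) to conclude that the nonzero cell modules are irreducible and pairwise non-isomorphic, with Corollary \ref{sphpar} identifying these as exactly the $e_k\Delta_k(\lambda)$ for $\lambda\in\Lambda_{sph}^k$. Your write-up is somewhat more explicit about the bilinear-form mechanism behind the Graham--Lehrer criterion, but the logical skeleton is identical.
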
  
\begin{proof}
  We know from Theorem \ref{semisimple} that $ \SpheAlg(t) $ is semisimple. It then follows from Theorem 3.8 of \cite{GL}
  that the nonzero cell modules, that is $ \{ e_k \Delta_k(\lambda) \, | \,  \lambda \in \Lambda_{sph}^{k} \} $,
  are irreducible and pairwise inequivalent.
%{\color{black}{\sout{This shows the Theorem.}}}
\end{proof}

In the following we shall use the language of quasi-hereditary algebras, 
see for example the appendix to \cite{Donkin}.
%{\color{black}{\sout{for a good introduction to it. }}}
In our setting, the following Theorem is useful for us.
\begin{theorem}\label{GLqh}
  $ \mathcal A $ is quasi-hereditary if and only if $ \langle \cdot , \cdot \rangle_{\lambda} \neq 0 $
  for all $ \lambda \in \Lambda$.
\end{theorem}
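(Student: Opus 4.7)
The plan is to work with the cell filtration $\mathcal{A}^{\geq\lambda} \supseteq \mathcal{A}^{>\lambda}$ and to detect heredity one layer at a time. The key preliminary is the multiplication formula
\[
C_{\s_1\T_1}\, C_{\s_2\T_2} \;\equiv\; \langle C_{\s_2}, C_{\T_1}\rangle_{\lambda}\, C_{\s_1\T_2} \pmod{\mathcal{A}^{>\lambda}},
\]
valid for all $\s_1, \T_1, \s_2, \T_2 \in T(\lambda)$. This is obtained by applying axiom \textbf{(ii)} of Definition \ref{cellular} to the product $C_{\s_1 \T_1}\, C_{\s_2 \T_2}$ (the structure constants there depending only on the left factor and on $\s_2$), and then identifying the extracted coefficient via the definition \eqref{intheaboveexpansion} of the bilinear form.

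Granted this, the forward direction is immediate. If $\mathcal{A}$ is quasi-hereditary with weight poset $\Lambda$, then every standard module has a nonzero simple head, so $L(\lambda) = \Delta(\lambda)/\rad(\lambda) \neq 0$ for every $\lambda \in \Lambda$, and hence $\rad(\lambda) \subsetneq \Delta(\lambda)$. But $\rad(\lambda)$ is by construction the radical of $\langle \cdot, \cdot\rangle_\lambda$, so none of these forms can be identically zero.

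For the converse I would verify, one $\lambda$ at a time, that $\overline{J}_\lambda := \mathcal{A}^{\geq \lambda}/\mathcal{A}^{>\lambda}$ is a heredity ideal in $\overline{\mathcal{A}}_\lambda := \mathcal{A}/\mathcal{A}^{>\lambda}$. First I would choose $\s_0, \T_0 \in T(\lambda)$ with $c := \langle C_{\s_0}, C_{\T_0}\rangle_\lambda \neq 0$ and set $\overline{e}_\lambda := c^{-1} C_{\s_0\T_0} \bmod \mathcal{A}^{>\lambda}$. Three direct applications of the multiplication formula then yield $\overline{e}_\lambda^{\,2} = \overline{e}_\lambda$, the equality $\overline{e}_\lambda\, \overline{\mathcal{A}}_\lambda\, \overline{e}_\lambda = \Bbbk\, \overline{e}_\lambda$, and finally $\overline{J}_\lambda = \overline{\mathcal{A}}_\lambda\, \overline{e}_\lambda\, \overline{\mathcal{A}}_\lambda$ (each basis element $C_{\s\T} \bmod \mathcal{A}^{>\lambda}$ is visibly recovered from $C_{\s\T_0}\, \overline{e}_\lambda\, C_{\s_0\T}$ up to an explicit unit scalar). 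These are the defining conditions for a heredity ideal with heredity idempotent $\overline{e}_\lambda$, and iterating along the cell chain then exhibits $\mathcal{A}$ as quasi-hereditary. The main obstacle is mostly presentational: organising the cellular bookkeeping so that the anti-automorphism $\ast$ and the particular choice of $\T_0$ used in the definition of the form play nicely with the rechoice of $(\s_0,\T_0)$ in the heredity argument. Conceptually, however, both implications are formal consequences of the displayed multiplication formula.
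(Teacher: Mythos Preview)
The paper does not actually prove this theorem; it is simply quoted as a known result from the cellular-algebra literature (Graham--Lehrer \cite{GL}, with a closely related characterisation in K\"onig--Xi \cite{KoXi2}). So there is no in-paper proof to compare against.

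On your proposal itself: the converse direction is the standard Graham--Lehrer argument and is correct. The multiplication formula you display is exactly the key identity, and your construction of the idempotent $\overline{e}_\lambda$ together with the verification that $\overline{J}_\lambda$ is a heredity ideal in $\overline{\mathcal{A}}_\lambda$ is precisely how the cell chain is shown to be a heredity chain.

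Your forward direction, however, has a gap. You assume that ``$\mathcal{A}$ is quasi-hereditary with weight poset $\Lambda$'' and that the standard modules of that structure are the cell modules $\Delta(\lambda)$. But the hypothesis is only that $\mathcal{A}$ is quasi-hereditary for \emph{some} heredity chain, not that the cell chain is one, nor that the cell modules are the standard modules. So the sentence ``every standard module has a nonzero simple head, so $L(\lambda)\neq 0$'' is circular as written. One correct way to close the gap: if $\langle\cdot,\cdot\rangle_\mu = 0$ for some $\mu$, then by your multiplication formula the two-sided ideal $\mathcal{A}^{\ge\mu}/\mathcal{A}^{>\mu}$ squares to zero; following \cite{KoXi2} one then shows this forces $\mathcal{A}$ to have infinite global dimension (equivalently, the Cartan determinant fails to be a unit), contradicting quasi-heredity. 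Alternatively, one can count simples: a cellular algebra has exactly $|\{\lambda:\langle\cdot,\cdot\rangle_\lambda\neq 0\}|$ simple modules, and compare with the length of any heredity chain.
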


For $ t \neq 0 $ it is known that $  \ParAlg(t)$ is a quasi-hereditary algebra,  
see \cite{Donkin2} or \cite{KoXi2}. In Theorem \ref{classiirre} we showed that $ \SpheAlg(t) $ is semisimple
and determined its irreducible modules if
$ t \notin \{0, 1,2,\ldots, 2k-2\} $. Combining 
Theorem \ref{stateandproveA} with Theorem \ref{GLqh}, we now obtain the quasi-heredity of 
$ \SpheAlg(t) $ in the remaining cases, except when  $ t=0$. 
\begin{corollary}\label{quasiheriditarySphe}

Suppose that $ t \in \{ 1,2,\ldots, 2k-2\} $. Then   
$\SpheAlg(t) $ is quasi-hereditary on the poset $ \Lambda_{sph}^{k}  $ 
with standard modules $ \{ e_k \Delta(\lambda) \, | \, \lambda \in \Lambda_{sph}^{k} \} $.
\end{corollary}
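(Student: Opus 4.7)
The plan is to deduce the quasi-heredity of $\SpheAlg(t)$ from Theorem \ref{GLqh}. By Theorem \ref{cellularity} together with Corollary \ref{sphpar}, $\SpheAlg(t)$ is a cellular algebra on the poset $(\Lambda_{sph}^{k}, \lhd)$ with cell modules $\{e_k \Delta_k(\lambda) \mid \lambda \in \Lambda_{sph}^{k}\}$, so the claim reduces to verifying that, for each $\lambda \in \Lambda_{sph}^{k}$, the associated cellular bilinear form $\langle \cdot, \cdot \rangle^{sph}_\lambda$ on $e_k \Delta_k(\lambda)$ is not identically zero.

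To exhibit a nonzero pairing, I would exploit the explicit basis $\mathcal{B}_\lambda$ of $e_k \Delta_k(\lambda)$ produced in Theorem \ref{stateandproveA}. For $\lambda \in \Par_l \subseteq \Lambda_{sph}^{k}$, consider the basis element $v = x_{\nu_0, \s_0, \mu_0}$, where $\nu_0 = (l^{\lambda_l}, (l-1)^{\lambda_{l-1}}, \ldots, 1^{\lambda_1})$, so that $\Psi(\nu_0) = \lambda$ and $|\nu_0| = \overline{b}(\lambda) \le k$ (the latter being precisely the condition $\lambda \in \Lambda_{sph}^{k}$, cf.\ the proof of Corollary \ref{sphpar}); $\s_0$ is the unique semistandard $\lambda$-tableau of type $\lambda$ (with $i$ in every node of row $i$); and $\mu_0$ is any element of $\Par_{k-|\nu_0|}$. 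Evaluating $\langle v, v\rangle^{sph}_\lambda$ via the tensor decomposition \eqref{semistandardbasis}, the factor coming from the middle tensorand reduces to the Murphy pairing $\langle x_{\T^\lambda}, x_{\T^\lambda}\rangle = |\Si_\lambda|$ in the Specht module $S(\lambda)$ of $\CC \Si_l$, which is a positive integer; the remaining diagrammatic factor from the outer tensorands contributes a positive integer multiple of a power of $t$, again nonzero for $t \in \{1, 2, \ldots, 2k-2\}$.

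The main technical obstacle is the careful bookkeeping required when reducing the relevant product in $\SpheAlg(t)$ modulo the cell ideal $e_k \ParAlg(t)^{\rhd \lambda} e_k$, so as to confirm that the announced factorisation indeed holds without unexpected cancellations among the various summands produced by the diagrammatic concatenation. As a fallback, one could argue indirectly by combining the known quasi-heredity of $\ParAlg(t)$ for $t \neq 0$ (see \cite{Donkin2} or \cite{KoXi2}) with the K\"onig-Xi comparison of bilinear forms under idempotent truncation from Proposition 4.3 of \cite{KoXi}, the analysis of which already underlies Theorem \ref{cellularity}; however, the direct computation via the basis $\mathcal{B}_\lambda$ has the merit of being entirely self-contained within the framework developed in this paper.
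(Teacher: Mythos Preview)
Your approach is essentially identical to the paper's: both invoke Theorem \ref{GLqh}, construct the very same test vector $v = x_{\nu_0, \s_0, \mu_0}$ with $\nu_0 = (l^{\lambda_l},\ldots,1^{\lambda_1})$ and $\s_0$ the unique semistandard $\lambda$-tableau of type $\lambda$ (the paper fixes $\mu_0 = (k-|\nu_0|)$ specifically), and correctly isolate the Murphy contribution $|\Si_\lambda| = \prod_i \lambda_i!$. One small correction to your bookkeeping expectation: the diagrammatic factor is not a single power of $t$ but rather $\tfrac{1}{k!}(A_1 t + A_2)$ with $A_1,A_2 \ge 0$ not both zero, which the paper obtains by classifying the $\sigma \in \Si_k$ from the middle $e_k$ into three types (the non-propagating block survives as an internal component, is absorbed into a propagating block, or the propagation number drops); since $t>0$ this is still nonzero, so your conclusion is unaffected.
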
  
\begin{dem}
  Let $ \lambda = (\lambda_1, \lambda_2, \ldots, \lambda_p) \in  \Lambda_{sph}^{k} $ with $ |\lambda |= l $.
  We then construct a special cellular basis element $ x_{\nu, \s, \mu} $
  for $ \Delta_k(\lambda) $ as in \eqref{semistandardbasis}. 
  For $ \nu $ we use $ \nu = (p^{\lambda_p}, (p-1)^{\lambda_{p-1}} ,\ldots, 1^{\lambda_1} ) $ which satisfies $ | \nu |  \le k $ 
  and $ \Psi(\nu ) = \lambda$. For $ \s $ we use the unique semistandard
  $ \lambda $-tableau of type $ \Psi(\nu) $, which has $1 $ in the nodes of the first row, $2$ in the nodes
  of the second row, and so on. Note that $ x_{\s} = x_{\lambda \lambda} $. Finally, for $ \mu $ we use the one-row partition $ \mu = (k-i) $ where
  $|  \nu |= i $. For these choices we set $ C_{\T_0} = x_{\nu, \s, \mu} $ and,
  in view of \eqref{intheaboveexpansion} and Theorem \ref{GLqh}, we must calculate the coefficient
  of $ C_{\T_0 \T_0} $ in the expansion of $ C_{\T_0 \T_0} C_{\T_0 \T_0} $ in terms of the cellular basis for $ \SpheAlg(t) $.
  For example, for $ k = 9$, $ \lambda= (2,2) $, $ \nu= (2^2, 1^2) $ and $ \mu=(3) $ we have diagrammatically 
\begin{equation}\label{intheabove}
  C_{\T_0 \T_0}= \raisebox{-.5\height}{\includegraphics[scale=0.8]{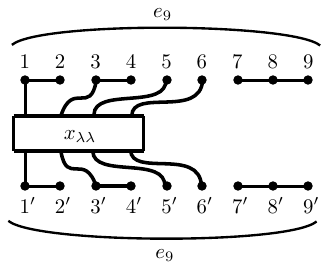}},  \, \, \, \, \, \, \, \, \, \, \, \, \, \, \, \,
  C_{\T_0 \T_0} C_{\T_0 \T_0} = \raisebox{-.5\height}{\includegraphics[scale=0.8]{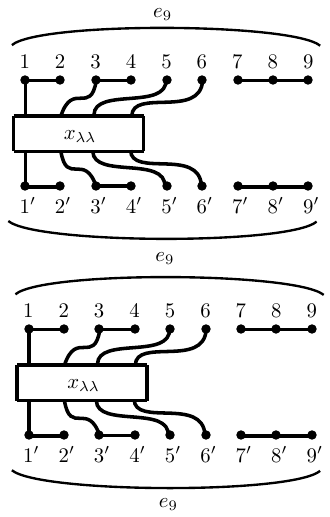}}
\end{equation}    
and must calculate the coefficient of $  C_{\T_0 \T_0}$ in the expansion of $  C_{\T_0 \T_0}  C_{\T_0 \T_0} $. 
For this we first observe that $ x_{\lambda \lambda }^2 = (\prod_{i=1}^p \lambda_i !) x_{\lambda \lambda } $.

\medskip
We next consider the contribution to the coefficient of $ C_{\T_0 \T_0} $ given by $ \sigma \in \Si_k $
from the expansion of the middle $ e_k $ of $ C_{\T_0 \T_0} C_{\T_0 \T_0} $ in terms of the group element basis
of $ \CC \Si_k $. 
We divide the elements $ \sigma \in \Si_k $ in three types, according to their contribution 
to the coefficient of $ C_{\T_0 \T_0}  $ in $ C_{\T_0 \T_0}  C_{\T_0 \T_0}  $. A key point
for what follows is the observation that this division
is exhaustive.

\begin{itemize}
\item[1.] We say that $ \sigma $ is of type 1 if it has the form $ \sigma= \sigma_1 \sigma_2 $ 
  where $ \sigma_1 $ is a permutation of the numbers within blocks of $ d_{\nu \cdot  \mu} $
  and $ \sigma_2 $ is a permutation of the blocks of $  d_{\nu } $ induced by an element from $ \Si_{\lambda} $.
  In the example \eqref{intheabove}, this means that $ \sigma_1 \in \Si_{1,2} \times \Si_{3,4} \times \Si_{7,8,9}
\le \Si_9$ 
and that $ \sigma_2 \in \langle (1,3)(2,4), (5,6) \rangle \le \Si_9$. Each element of type 1 has a contribution of
$ (\prod_{i=1}^p \lambda_i !) \frac{t }{k!} $
to the coefficient of $ C_{\T_0 \T_0}  $ in the product $ C_{\T_0 \T_0}  C_{\T_0 \T_0}  $. Below we give two examples of elements of type 1, the first of the form $ \sigma = \sigma_1 $ and
the second of the form $ \sigma = \sigma_2 $.
\begin{equation}\label{weillustratethisbelow1}
   \raisebox{-.5\height}{\includegraphics[scale=0.8]{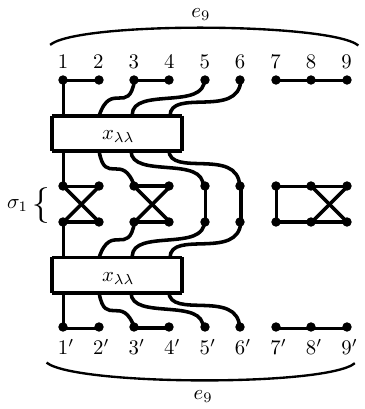}},  \, \, \, \, \, \, \, \, \, \, \, \, \, \, \, \,
   \raisebox{-.5\height}{\includegraphics[scale=0.8]{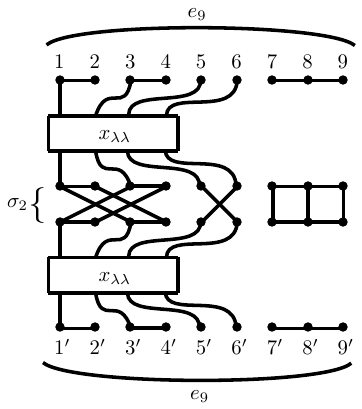}}
\raisebox{-50\height}{.} 
\end{equation}

\item[2.] We say that $ \sigma $ is of type 2 if it has contribution $ (\prod_{i=1}^p \lambda_i !) \frac{1 }{k!} $ to
  the coefficient of $ C_{\T_0 \T_0}  $, in other words, the factor $ t $ appearing in the contribution coming from
  type 1 elements is no longer present. Type 2 elements arise the same way as type 1 elements, except that
  the blocks coming from $ d_{\mu}$ are merged into the other blocks. 
  Below we give an example of an element of type 2.
\begin{equation}\label{weillustratethisbelow2}
   \raisebox{-.5\height}{\includegraphics[scale=0.8]{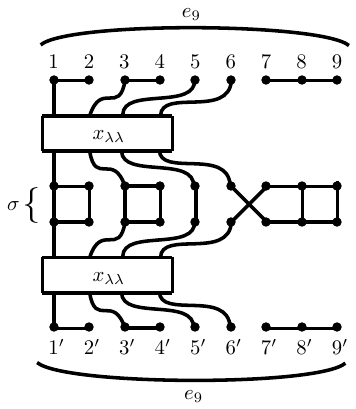}}
\raisebox{-50\height}{.} 
\end{equation}

\item[3.] Finally, we say that $ \sigma $ is of type 3 if it gives rise to a diagram with no contribution to $ C_{\T_0 \T_0}  $
  in the expansion of $  C_{\T_0 \T_0}  C_{\T_0 \T_0}  $, in other words, the diagram in question has strictly fewer than $ l $ propagating blocks. 
  Here is an  example.
\begin{equation}\label{weillustratethisbelow3}
  \raisebox{-.5\height}{\includegraphics[scale=0.8]{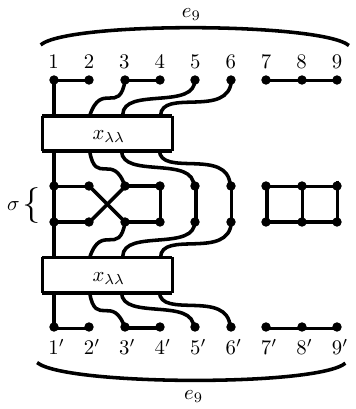}}
\raisebox{-50\height}{.} 
  \end{equation}
\end{itemize}
Let $ A_1, A_2 $ and $ A_3 $ be the cardinalites of type 1, type 2 and type 3 elements, respectively.
The numbers $ A_1, A_2 $ and $A_3 $ can be calculated using combinatorial methods, but we do not need their exact values and shall therefore
not do so. On the other hand, one easily checks that if $ \lambda \neq \emptyset $ then 
$ A_1 > 0 $ whereas $  A_2 > 0 $ if $ \lambda = \emptyset$. 
    
\medskip
Finally, to conclude the proof of the Corollary we now note that the coefficient of $ C_{\T_0 \T_0}  $ in $  C_{\T_0 \T_0}  C_{\T_0 \T_0}  $ is
$  (\prod_{i=1}^p \lambda_i !) \frac{1 }{k!} (A_1 t+ A_2) $ and this is nonzero by the hypothesis on $ t $. 
\end{dem}

\section{The decomposition numbers for $ \SpheAlg(n) $ when $ \SpheAlg(n) $ is non-semisimple. }
\label{decompositionnumbers}
In this section we shall use the results of the previous sections to determine the decomposition numbers for $ \SpheAlg(n) $ when $ \SpheAlg(n) $ is
quasi-hereditary and non-semisimple, that is 
when $ n \in \{1,2,\ldots, 2k-2\} $. 

\medskip
Our arguments depend crucially on \cite{PMartin1} in which the decomposition numbers for $  \ParAlg(n)$ are determined.
The results in \cite{PMartin1} are formulated in terms of the notion of {\it $n$-pairs of partitions}, which we need to explain.
For this, let $ \lambda \in \Par_l $ and let $ u \in  \lambda $ be the $(i,j)^{\color{black} \rm th} \,$node of $ \lambda$. For $ Q \in \Z $ we then define
the {\it $Q $-content of $u $}
as $  c_{\lambda}^Q(u)  = Q +j -i $ and let the {\it $Q $-content diagram of $\lambda $} be 
the diagram obtained from the Young diagram of $ \lambda $ by writing $   c_{\lambda}^Q(u)  $ in each node $ u \in \lambda$. 
For example, for $ \lambda = (5,3,3,2,2)$ the $2$-content diagram is as follows
\begin{equation}\label{nYoung}
   \raisebox{-.5\height}{\includegraphics[scale=0.8]{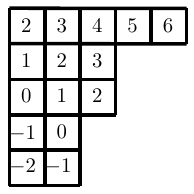}}
\raisebox{-33\height}{.} 
\end{equation}
\begin{definition}
Let $ (\lambda, \mu) $ be a pair of partitions of different orders. We then say that $ (\lambda, \mu) $ is {\it an $n$-pair} if
$ \lambda \subset \mu$ and the Young diagram for $ \mu $ is obtained from the Young diagram for $ \lambda $ by adding nodes
in exactly one row. Furthermore, the rightmost of these nodes should be of $| \lambda | $-content $n$. 
\end{definition}
Below we give two examples of $n$-pairs, in the first we choose $ n=4$ and in the second $ n = 15$.
\begin{equation}\label{npairs}
   \left( \raisebox{-.45\height}{\includegraphics[scale=0.8]{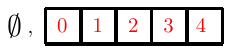}} \right), \, \, \, \, \, \, \, \, \, \, \, \, 
 \left( \raisebox{-.45\height}{\includegraphics[scale=0.8]{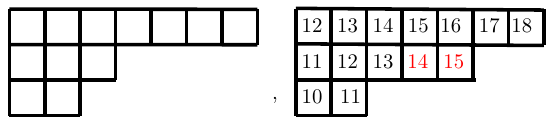}} \right) 
\raisebox{-25\height}{.} 
  \end{equation}

Note that there exists an alcove geometric description of $n$-pairs, see \cite{BdVK}.

\medskip
The following Lemma is immediate from Definition \ref{nYoung}.
\begin{lemma}\label{isimmediate}
  Suppose that $ n \in \Z $ and $ \lambda \in \Par $. Then there exists at most one $ \mu \in \Par $ such that
  $ (\lambda, \mu )$ is an $n$-pair.
\end{lemma}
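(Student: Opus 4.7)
The plan is to reduce uniqueness to a disjointness statement for certain intervals of contents attached to the rows of $\lambda$. Suppose $(\lambda, \mu)$ is an $n$-pair, so $\mu$ is obtained from $\lambda$ by adding $k \geq 1$ nodes to some row $i \in \{1, 2, \ldots, \ell(\lambda) + 1\}$. The rightmost added node sits at position $(i, \lambda_i + k)$, so the condition on its $|\lambda|$-content reads
\begin{equation*}
|\lambda| + (\lambda_i + k) - i = n, \qquad \text{i.e.,} \qquad \lambda_i + k - i = n - |\lambda|.
\end{equation*}
Once the row $i$ is fixed the value $k$ is forced, and so is $\mu$; it therefore suffices to show that at most one row index $i$ can occur.

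Next I would record the admissibility constraint: for $\mu$ to be a partition one needs $1 \leq k \leq \lambda_{i-1} - \lambda_i$, under the conventions $\lambda_0 = +\infty$ and $\lambda_{\ell(\lambda)+1} = 0$ (and ruling out rows beyond $\ell(\lambda)+1$, which would immediately break the partition shape). As $k$ runs over this admissible range, the quantity $\lambda_i + k - i$ runs through the consecutive integers in
\begin{equation*}
I_i := \bigl[\lambda_i - i + 1,\ \lambda_{i-1} - i\bigr],
\end{equation*}
which for $i = 1$ degenerates to the half-line $[\lambda_1, +\infty)$ and for $i = \ell(\lambda) + 1$ to $[-\ell(\lambda),\ \lambda_{\ell(\lambda)} - \ell(\lambda) - 1]$. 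Note that $I_i$ is empty precisely when $\lambda_i = \lambda_{i-1}$, in which case row $i$ admits no valid addition.

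Finally I would verify pairwise disjointness of the $I_i$: the bottom endpoint of $I_i$ is $\lambda_i - i + 1$ while the top endpoint of $I_{i+1}$ is $\lambda_i - (i+1) = \lambda_i - i - 1$, so the integer $\lambda_i - i$ strictly separates them and the intervals never overlap. Consequently, the value $n - |\lambda|$ belongs to at most one $I_i$, and hence at most one row (and therefore at most one $\mu$) is admissible. The only mild obstacle is the bookkeeping for the two boundary rows $i = 1$ and $i = \ell(\lambda)+1$, but the conventions above absorb it without incident; no deeper combinatorics is needed.
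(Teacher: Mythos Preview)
Your proof is correct and follows essentially the same idea as the paper's: both arguments identify, for each admissible row $i$, the set of attainable values of $n - |\lambda|$ and observe that these sets are pairwise disjoint. The paper's version is terser, first restricting to the rows with $\lambda_i < \lambda_{i-1}$ and then appealing to constancy of contents along diagonals (illustrated by a picture), whereas you make the intervals $I_i$ and their separation explicit; the underlying argument is the same.
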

\begin{proof}
  Let $ \lambda= (\lambda_1^{a_1}, \lambda_2^{a_2}, \ldots, \lambda_p^{a_p}) \in \Par_l$.
  If $ \mu \in \Par $ is obtained from $ \lambda $ by adding nodes to the $i^{\color{black} \rm th} \,$row, then we must have 
  $ i \in  \{1, a_1+1, a_1+a_2+1, \ldots, a_1+a_2+\ldots+a_p +1\} $. Since the $ | \lambda | $-contents are
  constant along the diagonals of $ \lambda $, we conclude from this that the possible values of $ n $ are all distinct,
  which shows the Lemma.
  Below we illustrate on the example $ \lambda = (9^1, 5^3, 3^2) $, where we have indicated with
  red the possible values of $ n$.
\begin{equation}\label{uniquepair}
  \lambda =  \raisebox{-.5\height}{\includegraphics[scale=0.8]{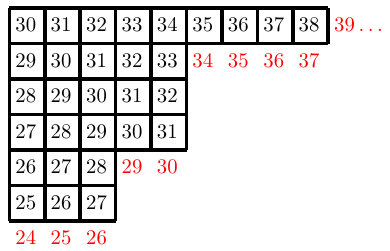}} 
\raisebox{-40\height}{.} 
  \end{equation}

\end{proof}

In \cite{PMartin1} the following important Theorem was proved.
\begin{theorem}\label{mainPaul}
  Let $ n \in \{1,2,\ldots, 2k-2 \} $. For $ \lambda \in \Lambda^k $ let $ L_k(\lambda) = \Delta_k(\lambda)/\rad(\lambda) $
be the irreducible $ \ParAlg(n)$-module associated with $ \lambda$. 
Then the following statements hold. 
  \phantomsection\label{importantThmPaul}
  \begin{description}
  \item[a)] Let $ \lambda, \mu \in \Lambda^k $ with $ \lambda \neq \mu$. Then there is a nonzero homomorphism of $ \ParAlg(n)$-modules
    $ \Delta_k(\mu) \rightarrow \Delta_k(\lambda) $ if and only if $ (\lambda, \mu) $ is an $n$-pair. 
  \item[b)] Let $ \lambda \in \Lambda^k$. If there is no $ \mu \in \Lambda^k $ such that $ (\lambda, \mu ) $ is an $n$-pair then
    $ \Delta_k(\lambda) $ is irreducible. Otherwise, $ \Delta_k(\lambda) $ has decomposition factors $ L_k(\lambda) $ and
    $ L_k(\mu) $ where $ (\lambda, \mu ) $ is the unique $n$-pair with $ \lambda $ in the first factor. 
  \item[c)] Let $ \lambda \in \Lambda^k $ and suppose that $ ( \lambda^1, \lambda^2, \ldots, \lambda^p) $
    is a chain of partitions in $ \Lambda^k$ such that $ \lambda= \lambda^1 $ and such that each $ (\lambda^i, \lambda^{i+1}) $ is an $n$-pair
    for $ i =1,2,\ldots, p-1$. Furthermore, assume that the chain is maximal in the sense that there is no $n$-pair $ (\lambda^p, \mu) $
    with $ \mu \in \Lambda^k$. Then there is a resolution of $ \ParAlg(n)$-modules
    \begin{equation}\label{resolution}
      0 \rightarrow \Delta_k(\lambda^p) \rightarrow \cdots \rightarrow \Delta_k(\lambda^2)\rightarrow \Delta_k(\lambda^1 ) \rightarrow L_k(\lambda  )
\rightarrow 0.
    \end{equation}    
  \end{description}
  
\end{theorem}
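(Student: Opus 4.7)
The plan is to exploit the tower structure of the partition algebras through the chain $\mathcal{P}_0 \subset \mathcal{P}_1 \subset \cdots \subset \ParAlg$. There is a natural idempotent $e \in \ParAlg$ (for instance, one coming from a propagating block of minimal size) such that $e \ParAlg e \cong \mathcal{P}_{k-1}$, giving rise to a localization functor $F(M) = eM$ from $\ParAlg$-modules to $\mathcal{P}_{k-1}$-modules, and a globalization functor $G$ in the opposite direction. The first step is to verify that $F$ and $G$ interact well with the cellular stratification: $F(\Delta_k(\lambda)) \cong \Delta_{k-1}(\lambda)$ for $|\lambda| < k$ and $F(\Delta_k(\lambda)) = 0$ when $|\lambda| = k$, while $G$ produces embeddings $\Delta_{k-1}(\lambda) \hookrightarrow \Delta_k(\lambda)$.

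Next, I would argue by induction on $k$. The critical ingredient is the action of a suitable central element on the cell modules $\Delta_k(\lambda)$, namely an analogue of the Jucys--Murphy sum whose eigenvalue on $\Delta_k(\lambda)$ is a polynomial in $n$ depending only on the contents of the boxes of $\lambda$. If $\Hom_{\ParAlg(n)}(\Delta_k(\mu), \Delta_k(\lambda))$ is nonzero then the two eigenvalues must agree, and solving the resulting identity for $n$ recovers exactly the condition that $(\lambda,\mu)$ is an $n$-pair. Combined with Lemma \ref{isimmediate}, this gives the skeleton of part $\mathbf{a)}$. The opposite direction, namely the explicit construction of a nonzero homomorphism whenever $(\lambda,\mu)$ is an $n$-pair, can be carried out by writing down a canonical diagram representative for the generator of $\Delta_k(\mu)$ and verifying, via the cellular bilinear form, that it lands in the socle of $\Delta_k(\lambda)$.

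For part $\mathbf{b)}$, the general machinery of cellular algebras from \cite{GL} forces any composition factor $L_k(\mu)$ of $\Delta_k(\lambda)$ to satisfy $\mu \rhd \lambda$ in the cell order, and then part $\mathbf{a)}$ restricts the candidates for $\mu$ to the unique $n$-pair partner. A dimension comparison between the non-semisimple decomposition and the generic one (which is semisimple for $n \notin \{0,1,\ldots,2k-2\}$) pins the multiplicity to one. Part $\mathbf{c)}$ then follows by splicing together the short exact sequences $0 \to \Delta_k(\lambda^{i+1}) \to \Delta_k(\lambda^i) \to L_k(\lambda^i) \to 0$ supplied by $\mathbf{b)}$; exactness of the spliced complex follows because each arrow has, as its image, the radical of the preceding cell module, and the chain terminates at $\Delta_k(\lambda^p) = L_k(\lambda^p)$ by maximality.

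The main obstacle is identifying the relevant central element and proving that $\Hom$ spaces between cell modules are at most one-dimensional in the non-semisimple case. A clean alternative route, at least for $n \ge 2k$, passes through the Schur--Weyl duality with $\CC\Si_n$ on $V_n^{\otimes k}$: decomposition data for $\ParAlg(n)$ can be read off from restriction problems for symmetric groups, and the $n$-pair condition is then visibly a shadow of the branching rule. The transfer of these results to the remaining $n \in \{1,\ldots,2k-2\}$ becomes an abstract consequence of quasi-heredity and of the fact that the decomposition matrix of $\ParAlg(n)$ varies with $n$ in a sufficiently rigid way.
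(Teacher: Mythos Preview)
The paper does not prove this theorem at all: it is quoted verbatim from \cite{PMartin1} (Martin, \emph{The Structure of the Partition Algebras}), as the sentence immediately preceding the statement makes explicit. So there is no ``paper's own proof'' to compare against; the authors use this result as a black box and then transport it to $\SpheAlg(n)$ by multiplying with $e_k$.

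Your sketch of the main route --- tower structure, localisation/globalisation along $e\ParAlg(n)e \cong \mathcal{P}_{k-1}(n)$, and a central element whose eigenvalue on $\Delta_k(\lambda)$ is a content-sum polynomial in $n$ --- is in fact a fair outline of how Martin's original argument in \cite{PMartin1} proceeds, so as a plan it is on the right track. The splicing argument for $\mathbf{c)}$ is also essentially correct once $\mathbf{a)}$ and $\mathbf{b)}$ are in hand.

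There is, however, a genuine gap in your proposed ``alternative route''. Schur--Weyl duality on $V_n^{\otimes k}$ for $n \ge 2k$ lands you precisely in the semisimple regime, where every $\Delta_k(\lambda)$ is already irreducible and the decomposition matrix is the identity; nothing about $n$-pairs is visible there. The non-semisimple cases are exactly $n \in \{1,\ldots,2k-2\}$, and there is no ``rigidity'' principle that transports decomposition numbers from the semisimple specialisations to these. Quasi-heredity alone does not determine decomposition numbers. So this alternative route would not work, and you are forced back onto the tower/central-element argument (or another genuinely non-semisimple analysis) to establish $\mathbf{a)}$ and $\mathbf{b)}$.
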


Note that \eqref{resolution} gives rise to the formula 
\begin{equation}
\dim L_k(\lambda) = \sum_{i=1}^p (-1)^{i+1} \dim \Delta_k(\lambda^i). 
\end{equation}  
In view of \eqref{bytheconstruction}, this is an explicit formula for $ \dim L_k(\lambda) $. 

\medskip
In order to apply Theorem \ref{mainPaul} we need the following Lemma.
\begin{lemma}\label{weneedthe}
  Suppose that $ \lambda \in \Lambda_{sph}^{k}$. Then $ e_k L_k(\lambda) \neq 0 $. It is an irreducible $ \SpheAlg(n) $-module and
  the set $ \{  e_k L_k(\lambda)  \, | \,  \lambda \in \Lambda_{sph}^{k} \} $ is a complete set of representatives for the
  isomorphism classes of irreducible the $ \SpheAlg(n) $-modules. 
  \end{lemma}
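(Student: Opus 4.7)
The plan is to combine three ingredients already at hand: the general Green-style theorem for idempotent truncations (invoked earlier in the proof of Theorem \ref{semisimple} via (iv) of Theorem (4) of {\bf A1} in the appendix to \cite{Donkin}), Corollary \ref{sphpar} identifying precisely when $e_k \Delta_k(\lambda) \neq 0$, and Corollary \ref{quasiheriditarySphe} pinning down the cellular poset of $\SpheAlg(n)$. The Green-type theorem asserts that the exact functor $M \mapsto e_k M$ sends each simple $\ParAlg(n)$-module $L_k(\mu)$ either to zero or to a simple $\SpheAlg(n)$-module, and moreover induces a bijection between $\{\mu \in \Lambda^k : e_k L_k(\mu) \neq 0\}$ and the set of isomorphism classes of simple $\SpheAlg(n)$-modules. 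Thus the whole lemma reduces to identifying $\{\mu \in \Lambda^k : e_k L_k(\mu) \neq 0\}$ with $\Lambda_{sph}^{k}$.

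For the inclusion $\{\mu : e_k L_k(\mu) \neq 0\} \subseteq \Lambda_{sph}^{k}$, I would argue as follows. Suppose $\mu \in \Lambda^k \setminus \Lambda_{sph}^{k}$. Since $L_k(\mu)$ is a quotient of $\Delta_k(\mu)$ and multiplication by $e_k$ is exact, the vanishing $e_k \Delta_k(\mu) = 0$ supplied by Corollary \ref{sphpar} immediately forces $e_k L_k(\mu) = 0$.

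For the reverse inclusion I would argue by counting. By Corollary \ref{quasiheriditarySphe}, $\SpheAlg(n)$ is quasi-hereditary on the poset $\Lambda_{sph}^{k}$, and hence has exactly $|\Lambda_{sph}^{k}|$ isomorphism classes of simple modules. The previous paragraph shows every simple $\SpheAlg(n)$-module is of the form $e_k L_k(\lambda)$ for some $\lambda \in \Lambda_{sph}^{k}$, and the Green-type theorem tells us the labelling $\lambda \mapsto e_k L_k(\lambda)$ is injective on the subset where it is nonzero. Since only $|\Lambda_{sph}^{k}|$ candidate labels remain and exactly that many simples must be produced, each $\lambda \in \Lambda_{sph}^{k}$ must in fact yield a nonzero simple module, giving $e_k L_k(\lambda) \neq 0$ for every $\lambda \in \Lambda_{sph}^{k}$ and finishing the proof.

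I do not anticipate any serious obstacle: the argument is essentially a bookkeeping exercise resting on results of Section \ref{sectioncellularity}. The only mildly delicate point is the appeal to the Green-type theorem in the non-semisimple range $n \in \{1,2,\ldots,2k-2\}$, but this is a general fact about idempotents in an arbitrary finite-dimensional algebra and requires no semisimplicity hypothesis, so no extra work is needed.
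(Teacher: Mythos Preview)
Your proposal is correct and follows essentially the same path as the paper: both arguments rest on Corollary~\ref{quasiheriditarySphe} together with the general Green/cellular machinery. The only minor difference is organizational: the paper reads off $e_k L_k(\lambda)\neq 0$ directly from the nonvanishing of the cellular form on $e_k\Delta_k(\lambda)$ established in the proof of Corollary~\ref{quasiheriditarySphe} (since that form is the restriction of the form on $\Delta_k(\lambda)$, its radical is $e_k\,\rad(\lambda)$, so $e_k L_k(\lambda)=e_k\Delta_k(\lambda)/e_k\,\rad(\lambda)\neq 0$), whereas you reach the same conclusion by a pigeonhole count on the number of simples. Both routes are valid and equally short.
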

\begin{proof}

In follows from Corollary \ref{quasiheriditarySphe} that $ e_k L_k(\lambda) \neq 0 $ when $ \lambda \in 
\Lambda_{sph}^{k} $. From this the remaining statements of the Lemma follow from the
general cellular algebra theory, see \cite{GL}. 
\end{proof}

\medskip
Combining, we obtain the following Theorem.
\begin{theorem}
\phantomsection\label{mainThm}
\begin{description}
  \item[a)] $ \{  e_k L_k(\lambda)  \, | \,  \lambda \in \Lambda_{sph}^{k} \} $ is a complete set of representatives for the
  isomorphism classes of irreducible the $ \SpheAlg(n) $-modules.
  \item[b)] Let $ \lambda \in \Lambda_{sph}^k$. If there is no $ \mu \in \Lambda_{sph}^k $ such that $ (\lambda, \mu ) $ is an $n$-pair then
    $ e_k\Delta_k(\lambda) $ is an irreducible $ \SpheAlg(n) $-module.
    Otherwise, $ e_k\Delta_k(\lambda) $ has decomposition factors $ e_kL_k(\lambda) $ and
    $ e_kL_k(\mu) $ where $ (\lambda, \mu ) $ is the unique $n$-pair with $ \lambda $ in the first factor. 
  \item[c)] Let $ \lambda \in \Lambda_{sph}^k $ and suppose that $ (\lambda^1, \lambda^2, \ldots, \lambda^p) $
    is a chain of partitions in $ \Lambda_{sph}^k$ such that $ \lambda= \lambda^1 $ and such that each $ (\lambda^i, \lambda^{i+1}) $ is an $n$-pair
    for $ i =1,2,\ldots, p-1$. Furthermore, assume that the chain is maximal in the sense that there is no $n$-pair $ (\lambda^p, \mu) $
    with $ \mu \in \Lambda_{sph}^{k} $. Then there is a resolution of $ \SpheAlg(n)$-modules
    \begin{equation}\label{resolutionA}
      0 \rightarrow e_k\Delta_k(\lambda^p) \rightarrow \cdots \rightarrow e_k\Delta_k(\lambda^2)\rightarrow e_k\Delta_k(\lambda^1 ) \rightarrow e_kL_k(\lambda  )
\rightarrow 0.
    \end{equation}    
  \end{description}
  
\end{theorem}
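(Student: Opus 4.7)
The plan is to deduce Theorem \ref{mainThm} from Martin's Theorem \ref{importantThmPaul} by transferring structural information via the idempotent truncation functor
\begin{equation*}
F : \ParAlg(n)\text{-mod} \longrightarrow \SpheAlg(n)\text{-mod}, \qquad M \longmapsto e_k M.
\end{equation*}
Since $e_k$ is an idempotent of $\ParAlg(n)$, $F$ is exact. By Corollary \ref{sphpar}, $F(\Delta_k(\mu)) \neq 0$ if and only if $\mu \in \Lambda_{sph}^k$, and since $L_k(\mu)$ is a quotient of $\Delta_k(\mu)$, exactness of $F$ forces $e_k L_k(\mu) = 0$ whenever $\mu \notin \Lambda_{sph}^k$.

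Part \textbf{a)} is simply a restatement of Lemma \ref{weneedthe}. For part \textbf{b)}, I would apply $F$ to the short composition series of $\Delta_k(\lambda)$ provided by Theorem \ref{importantThmPaul} \textbf{b)}. If no $n$-pair $(\lambda, \mu)$ with $\mu \in \Lambda^k$ exists, then $\Delta_k(\lambda) = L_k(\lambda)$ and $F(\Delta_k(\lambda)) = e_k L_k(\lambda)$ is simple by Lemma \ref{weneedthe}. If the unique $n$-pair $(\lambda, \mu)$ exists in $\Lambda^k$ but $\mu \notin \Lambda_{sph}^k$, then $F$ annihilates $L_k(\mu)$ while leaving $L_k(\lambda)$ nonzero, so exactness forces $e_k \Delta_k(\lambda) = e_k L_k(\lambda)$, which is simple; together with the previous case this accounts for the first alternative in the theorem. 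Finally, if $\mu \in \Lambda_{sph}^k$, both composition factors survive $F$ as nonzero simples of $\SpheAlg(n)$.

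For part \textbf{c)}, I would apply $F$ to the full Martin resolution associated with $\lambda^1 = \lambda$. The crucial observation is that the function $\overline{b}$ appearing in \eqref{concretelydefined2} \emph{strictly} increases along $n$-pairs: if $\mu$ is obtained from $\lambda$ by adding $a > 0$ nodes in row $i$, then $\overline{b}(\mu) - \overline{b}(\lambda) = i a > 0$. Combined with Lemma \ref{isimmediate}, which forces the Martin chain starting at $\lambda$ to be uniquely determined step by step, this shows that once the chain exits $\Lambda_{sph}^k$ (i.e., once $\overline{b}$ exceeds $k$), it cannot re-enter. Hence the maximal chain $(\lambda^1, \ldots, \lambda^p)$ in $\Lambda_{sph}^k$ coincides with the initial segment of the full Martin chain $(\lambda^1, \ldots, \lambda^{p'})$ in $\Lambda^k$ that lies inside $\Lambda_{sph}^k$. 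By Corollary \ref{sphpar}, every remaining $\Delta_k(\lambda^j)$ with $j > p$ is killed by $F$, so applying the exact functor $F$ to Martin's resolution and discarding the zero tail yields the truncated exact sequence \eqref{resolutionA}.

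The main obstacle is the bookkeeping in part \textbf{c)}: one must verify that the monotonicity of $\overline{b}$ along $n$-pairs is strict and that the Martin chain is rigid enough (via Lemma \ref{isimmediate}) that the exit from $\Lambda_{sph}^k$ is genuinely a truncation rather than an accidental vanishing in the middle of a longer partial complex. Once this monotonicity is in hand, the remainder of the proof reduces to the fact that an exact functor applied to an exact sequence preserves exactness at every nonzero position, which immediately gives \eqref{resolutionA}.
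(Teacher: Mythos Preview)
Your proposal is correct and follows essentially the same approach as the paper: part \textbf{a)} cites Lemma \ref{weneedthe}, and parts \textbf{b)} and \textbf{c)} are obtained by applying the exact idempotent truncation functor $M \mapsto e_k M$ to the corresponding statements of Theorem \ref{importantThmPaul}. Your argument for \textbf{c)} is in fact more careful than the paper's, which simply invokes exactness without explaining why the maximal chain in $\Lambda_{sph}^k$ is an initial segment of the Martin chain in $\Lambda^k$; your observation that $\overline{b}$ strictly increases along $n$-pairs fills this gap neatly.
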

\begin{proof}
  The statement in ${ \bf a)} $ has already appeared in Lemma \ref{weneedthe}. The statement in ${ \bf c)} $
  follows from ${ \bf c)} $ of Theorem \ref{importantThmPaul} and the fact that left multiplication with $ e_k$ is
  an exact functor. To show the first statement of ${ \bf b)} $, we observe that under the hypothesis on $ \lambda $ the resolution
  \eqref{resolutionA} becomes 
    \begin{equation}\label{resolutionshoert}
      0 \rightarrow e_k\Delta_k(\lambda^1 ) \rightarrow e_kL_k(\lambda  )
\rightarrow 0
    \end{equation}
    which shows that $   e_k\Delta_k(\lambda ) $ is irreducible, as claimed. Finally, the second statement of
    ${ \bf b)} $ follows from the corresponding statement in ${ \bf b)} $ of Theorem \ref{importantThmPaul} and
    exactness of left multiplication with $e_k$.
\end{proof}

As above, we note that the resolution \eqref{resolutionA}, combined with \eqref{48}, gives rise to an explicit formula for
the dimensions of the irreducible $ \SpheAlg(n)$-modules, as follows
\begin{equation}
\dim e_k L_k(\lambda) = \sum_{i=1}^p (-1)^{i+1} \dim e_k \Delta_k(\lambda^i). 
\end{equation}

Let us consider the example $ \lambda = (1) \in \Lambda^3_{sph}$ with $ k=n=3$. 
Then the chain in ${ \bf c)} $ of Theorem \ref{mainThm} has the form $ \{ \lambda^1, \lambda^2 \} $ where
$ \lambda^1 = \lambda $ and $ \lambda^2 = (3) $ and so the resolution in 
\eqref{resolutionA} becomes 
\begin{equation}
0 \rightarrow e_3\Delta_3(\lambda^2 ) \rightarrow e_3\Delta_3(\lambda^1 ) \rightarrow e_3L_3(\lambda  )
\rightarrow 0.
\end{equation}
Using ${ \bf b)} $ of Theorem \ref{stateandproveA} we get $ \dim e_3\Delta_3(\lambda^1 )= 4$ and
$ \dim e_3\Delta_3(\lambda^2 )= 1$ and so we find that $ \dim e_3L_k(\lambda^1 )= 3$. 

\medskip
It is interesting to compare this with $ \dim G_3(\mu) $ where $ \mu =(2,1) \in  \Par_{sph}^{3, 3}$.
Note that $ \overline{\mu} = \lambda $ where $ \overline{\mu} $ 
is defined by $ \overline{ \mu } = ( \mu_2, \ldots, \mu_l)  $ for
$ \mu = (\mu_1, \mu_2, \ldots, \mu_l)$. 
Using
${ \bf c)} $ of Theorem \ref{joining the results} we obtain $ \dim G_3(\mu)  = 3$, that is 
$   \dim G_3(\mu) =  \dim e_3L_k(\lambda ) $.

\medskip 
We think that this equality is no coincidence. To be precise, for $ \lambda \in \ParSph$
we think that it should be true that
\begin{equation}\label{conjecture}
\dim G_k(\lambda ) = \dim e_kL_k(\overline{\lambda} ).
\end{equation}
We note that we have verified \eqref{conjecture} for $ k \le 11 $ using SageMath. We also note that
for $ \ParAlg(n) $ the statement corresponding to \eqref{conjecture} should be true as well but appears not to have been proved
in the literature.

\section{Tilting modules for $ \ParAlg(n) $ and $ \SpheAlg(n)$}\label{tilting}
We already saw that $  \ParAlg(n) $ are quasi-hereditary algebras when 
$ n \neq 0$ and therefore, in particular, they 
are endowed with families of {\it tilting modules}, see the appendix to \cite{Donkin}. 
In this section we take the opportunity to 
describe the structure of these tilting modules, using standard 
arguments from the theory of
quasi-hereditary algebras.
We observe that the same arguments also provide us 
with a description of the tilting modules for $ \SpheAlg(n)$. 

\medskip
We assume $ n \in \{1,2,\ldots, 2k-2\} $
in which case $   \ParAlg(n) $, as we already saw, is non-semisimple quasi-hereditary on the poset $ \Lambda^k $
defined in \eqref{poserpar}. Correspondingly, the category $ \ParAlg(n) $-mod of finite dimensional 
$ \ParAlg(n) $-modules is a {\it highest weight category} where the 
standard modules $ \{ \Delta_k(\lambda) \, | \, \lambda \in 
\Lambda^k \} $ are as described in the paragraphs between \eqref{before dd} and \eqref{6.9} and
the irreducible
modules $ \{ L_k(\lambda) \, | \, \lambda \in 
\Lambda^k \} $ as described in $ { \bf c)} $ of Theorem \ref{mainPaul}.

\medskip
$ \ParAlg(n) $-mod is equipped with a duality $ M \mapsto M^{\ast} $ via 
$ M^{\ast} = \Hom_{\CC}(M, \CC) $ where the $    \ParAlg(n) $-structure on $ M^{\ast} $ is given by 
\begin{equation}
af(m) = f(a^{\ast} m)  \mbox{ for } a \in  \ParAlg(n), f \in M^{\ast},  m \in M
\end{equation}
for $ a \mapsto a^{\ast} $ the anti-automorphism coming from the cellular structure on $    \ParAlg(n) $. 
Note that the $L_k(\lambda) $'s are self dual $ L_k(\lambda) = L_k(\lambda)^{\ast}$ via 
\begin{equation}
L_k(\lambda) \rightarrow L_k(\lambda)^{\ast}, v \mapsto  \langle \cdot , v \rangle_{\lambda} .
\end{equation}
The {\it costandard modules}
$ \{ \nabla_k(\lambda)   \, | \,  \lambda \in \Lambda^k \}$
for $    \ParAlg(n) $ are defined by $ \nabla_k(\lambda) = \Delta_k(\lambda)^{\ast}$.

\medskip
The following definitions and results are part of the general theory of quasi-hereditary algebras. 
Let $ {\cal F}_k(\Delta) $ be the subcategory of $   \ParAlg(n) $-modules
whose objects have $ \Delta$-filtrations, 
in other words, a $   \ParAlg(n) $-module $ M $ belongs to $ {\cal F}_k(\Delta) $ if there is a filtration
of $   \ParAlg(n) $-modules $ 0 \subset M_1 \subset M_2 \subset \ldots \subset M_r = M $ such
that for each $ i=1,2,\ldots, r $ there is a $ \lambda_i \in \Lambda^k$
such that 
$ M_i / M_{i-1} = \Delta_k(\lambda_i ) $.
We define $ {\cal F}_k(\nabla) $ in a similar way, that is $ M\in {\cal F}_k(\nabla) $ if and only if 
$ M^{\ast} \in  {\cal F}_k(\Delta) $. 

\medskip
For $ \lambda \in \Lambda^k $ we let $P_k(\lambda) $ be the projective cover of $ L_k(\lambda) $
in $ \ParAlg(n) $-mod. 
Then $ P_k(\lambda ) \in  {\cal F}_k(\Delta) $ and for any $ \Delta $-filtration
$ 0 \subset M_1 \subset M_2 \subset \ldots \subset M_{r-1} \subset  M_r = P_k(\lambda ) $
with $ M_i/ M_{i-1} = \Delta_k(\lambda_i) $ 
we have $ \lambda_r  =\lambda  $ whereas $ \lambda_j  \rhd \lambda $ for $ j <r$. 
For $ M \in {\cal F}_k(\Delta) $ we define $ (M: \Delta_k(\lambda)) = \dim \Hom_{\ParAlg(n)}(M, \nabla_k(\lambda))$
which is the number of times $ \Delta_k(\lambda) $ occurs as a subfactor in
a $ \Delta$-filtration of $ M$. We then have the Brauer-Humphreys reciprocity formula 
\begin{equation}\label{reciprocity}
(P_k(\lambda): \Delta_k(\mu))  = [ \Delta_k(\mu): L_k(\lambda) ] \, \, \, \mbox{for } \lambda, \mu \in \Lambda^k
\end{equation}  
where $ [ \Delta_k(\mu): L_k(\lambda) ] $ denotes decomposition number multiplicity. 

\medskip
For $ \lambda \in \Lambda^k$ we let $ \ParAlg(n) \mbox{-mod}^{\le \lambda}$ be the subcategory 
of $ \ParAlg(n) $-mod consisting of modules 
with composition factors in $ \{ L_k(\mu) \,|  \, \mu \unlhd \lambda\}$. Then $ \ParAlg(n) \mbox{-mod}^{\le \lambda}$ is a highest weight category with standard modules
$ \{ \Delta_k(\mu) \, | \, \mu \unlhd \lambda \} $ and costandard modules
$ \{ \nabla_k(\mu) \, | \, \mu \unlhd \lambda \} $ and so we deduce from the description
of projective covers 
that $ \Delta_k(\lambda) $ is the projective cover of $ L_k(\lambda) $ in
$\ParAlg(n) \mbox{-mod}^{\le \lambda} $. If $ \mu \lhd \lambda  $ we then get
from $ { \bf b)} $ of Theorem \ref{importantThmPaul} and Proposition A3.3 in \cite{Donkin} that
\begin{equation}\label{84}
\dim   \Ext^1_{\ParAlg(n) \mbox{-mod}}(L_k(\lambda), L_k(\mu) )  =
 \dim \Ext^1_{\ParAlg(n) \mbox{-mod}^{\le \lambda}}(L_k(\lambda), L_k(\mu) )  =
 \left\{ \begin{array}{ll} 1 & \mbox{ if }  (\lambda, \mu ) \mbox{ is an } \,n\mbox{-pair} \\
0 & \mbox{ otherwise } \end{array} \right.
\end{equation}  
and if $ \lambda \lhd \mu  $ we get 
\begin{equation}\label{85}
\dim   \Ext^1_{\ParAlg(n) \mbox{-mod}}(L_k(\lambda), L_k(\mu) )  =
 \dim \Ext^1_{\ParAlg(n) \mbox{-mod}}(L_k(\mu)^{\ast}, L_k(\lambda)^{\ast} )  =
 \left\{ \begin{array}{ll} 1 & \mbox{ if }  (\mu, \lambda ) \mbox{ is an } \,n\mbox{-pair} \\
0 & \mbox{ otherwise } \end{array} \right.
\end{equation}  
since $ L_k(\mu)^{\ast} = L_k(\mu) $ and $ L_k(\lambda)^{\ast} = L_k(\lambda) $.

\medskip
We now fix a chain of partitions ${\cal C}= \{ \lambda^1, \lambda^2, \ldots, \lambda^p \} $ 
in $ \Lambda^k $ such that $(\lambda^i, \lambda^{i+1} ) $ is an $ n$-pair
for $ i=1,2,\ldots, p-1$. Suppose furthermore that the chain is maximal in both directions, in other words 
there is no $ \mu \in \Lambda^k $ such that $ (\mu, \lambda^1) $ is an $n$-pair or such that 
$ (\lambda^p, \mu ) $ is an $n$-pair. By Lemma \ref{isimmediate}, each $ \lambda \in \Lambda^k $ belongs to
a unique such maximal chain $ \cal C$.
Defining
\begin{equation}
  \ParAlg(n) \mbox{-mod}^{\cal C} = \{ M \in \ParAlg(n) \mbox{-mod}  \mid  [M:L_k(\lambda)] \neq 0
  \implies \lambda \in {\cal C}\} 
 \end{equation}
we get from \eqref{84} and \eqref{85} that 
$ \ParAlg(n ) = \oplus_{\cal C} \ParAlg(n) \mbox{-mod}^{\cal C} $ is the block decomposition
of $  \ParAlg(n) \mbox{-mod} $ where $ \cal C$ runs over maximal chains in the above sense.

\medskip
A $ \ParAlg(n)$-module $T$ is called a {\it tilting module} if $ T \in {\cal F}_k(\Delta) \cap {\cal F}_k(\nabla) $. 
For each $ \lambda \in \Lambda^k $ there exists a unique indecomposable tilting module $ T_k(\lambda) $
satisfying $ [ T_k(\lambda): L_k(\lambda) ] = 1 $ and that $ [ T_k(\lambda): L_k(\mu) ] \neq 0
\implies \mu \unlhd \lambda $. 
Each tilting module $ T $ is a direct sum of such $  T_k(\lambda) $'s.

\medskip
Part $   { \bf a)} $ of the following Theorem was obtained already
in \cite{PMartin1}, but still we include it for completeness.

\begin{theorem} With the above notation, we have the following results. 
\phantomsection\label{mainThmLoewy}
\begin{description}
\item[a)] If $ j = 2, 3, \ldots, p-1$ then the Loewy structure for $ P_k(\lambda^j) $ is as follows
  \begin{equation}\raisebox{-.43\height}{\includegraphics[scale=1]{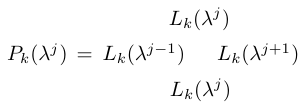}}
\! \!\!\!\!\!\!\!\!\!\!\! \! \!\!\! \!\!\!
\raisebox{-16\height}{.}   
  \end{equation}

\item[b)] If $ j= 1 $ then the Loewy structure for $ P_k(\lambda^1) $ is as follows
\begin{equation}
  \raisebox{-.35\height}{\includegraphics[scale=1]{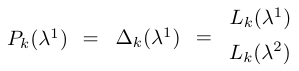}}
\raisebox{-7\height}{.} 
    \end{equation}  
\item[c)]  If $ j= p $ then the Loewy structure for $ P_k(\lambda^p) $ is as follows
\begin{equation}
  \raisebox{-.35\height}{\includegraphics[scale=1]{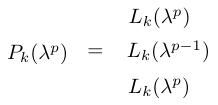}}
 \!\!\!  \!\!
\raisebox{-12\height}{.} 
    \end{equation} 
  \end{description}
  
\end{theorem}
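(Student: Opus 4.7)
The plan is to read off the Loewy structure of $P_k(\lambda^j)$ from three ingredients that are already available: Brauer-Humphreys reciprocity \eqref{reciprocity}, the decomposition of standard modules given in Theorem \ref{importantThmPaul}\,${\bf b)}$, and the $\Ext^1$-formulas \eqref{84}, \eqref{85} combined with the uniqueness statement of Lemma \ref{isimmediate}.

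First I would identify the $\Delta$-filtration of $P_k(\lambda^j)$. By \eqref{reciprocity} the multiplicity $(P_k(\lambda^j):\Delta_k(\mu))$ equals $[\Delta_k(\mu):L_k(\lambda^j)]$, and by Theorem \ref{importantThmPaul}\,${\bf b)}$ this is nonzero precisely for $\mu=\lambda^j$ (with value $1$) and, when $j>1$, for $\mu=\lambda^{j-1}$ (also with value $1$), since by Lemma \ref{isimmediate} together with the two-sided maximality of the chain $\cal C$, the only $\mu\in\Lambda^k$ for which $(\mu,\lambda^j)$ is an $n$-pair is $\mu=\lambda^{j-1}$, and this $\mu$ exists exactly when $j>1$. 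Thus for $1<j<p$ the module $P_k(\lambda^j)$ fits into a short exact sequence
\[
0\to\Delta_k(\lambda^{j-1})\to P_k(\lambda^j)\to\Delta_k(\lambda^j)\to 0,
\]
for $j=1$ we obtain $P_k(\lambda^1)=\Delta_k(\lambda^1)$, and for $j=p$ we obtain an analogous short exact sequence with $\Delta_k(\lambda^p)=L_k(\lambda^p)$ (irreducible by Theorem \ref{importantThmPaul}\,${\bf b)}$ and by maximality of $\cal C$).

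Next I would compute the first two Loewy layers. The head of $P_k(\lambda^j)$ is $L_k(\lambda^j)$ by the definition of the projective cover. The standard identity
\[
\rad(P_k(\lambda^j))/\rad^2(P_k(\lambda^j))\;\cong\;\bigoplus_{\mu\in\Lambda^k}L_k(\mu)^{\oplus\dim\Ext^1(L_k(\lambda^j),L_k(\mu))},
\]
together with \eqref{84}, \eqref{85} and Lemma \ref{isimmediate}, shows that this second layer equals $L_k(\lambda^{j-1})\oplus L_k(\lambda^{j+1})$ when $1<j<p$, equals $L_k(\lambda^2)$ when $j=1$, and equals $L_k(\lambda^{p-1})$ when $j=p$.

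Finally, I would pin down the socle by a composition factor count. Summing multiplicities across the $\Delta$-factors and applying Theorem \ref{importantThmPaul}\,${\bf b)}$, the multiset of composition factors of $P_k(\lambda^j)$ is $\{L_k(\lambda^{j-1}),L_k(\lambda^j),L_k(\lambda^j),L_k(\lambda^{j+1})\}$ for $1<j<p$, is $\{L_k(\lambda^1),L_k(\lambda^2)\}$ for $j=1$, and is $\{L_k(\lambda^{p-1}),L_k(\lambda^p),L_k(\lambda^p)\}$ for $j=p$. Subtracting the head and the second Loewy layer, the remaining composition factor is either absent (when $j=1$) or a single copy of $L_k(\lambda^j)$; since $L_k(\lambda^j)$ is simple, this forces $\rad^2(P_k(\lambda^j))\cong L_k(\lambda^j)$ and the Loewy length to be $3$ in the remaining cases, yielding the pictured diagrams. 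The only subtlety I anticipate is ruling out that a copy of $L_k(\lambda^j)$ sneaks into the second layer; this is blocked by $\dim\Ext^1(L_k(\lambda^j),L_k(\lambda^j))=0$, which follows from \eqref{84} since $(\lambda^j,\lambda^j)$ is not an $n$-pair.
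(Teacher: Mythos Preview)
Your proof is correct and follows essentially the same approach as the paper: both first determine the $\Delta$-factors of $P_k(\lambda^j)$ via Brauer--Humphreys reciprocity \eqref{reciprocity} and Theorem \ref{importantThmPaul}\,${\bf b)}$, and then identify the second Loewy layer using the $\Ext^1$-computations \eqref{84}, \eqref{85} (the paper phrases this as computing $\dim\Hom_{\ParAlg(n)}(Q_k(\lambda^j),L_k(\lambda^i))$ for $Q_k(\lambda^j)=\rad P_k(\lambda^j)$, which is the same thing). Your composition-factor count and your explicit remark that $\Ext^1(L_k(\lambda^j),L_k(\lambda^j))=0$ make the final step slightly more transparent than in the paper; note only that \eqref{84} is literally stated for $\mu\lhd\lambda$, so the self-$\Ext^1$ vanishing strictly speaking requires the same truncation argument applied with $\mu=\lambda$ (or the general quasi-hereditary fact), which is immediate.
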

\begin{dem}
  To prove $   { \bf a)} $ we first observe that
$ { \bf b)} $ of Theorem \ref{importantThmPaul} together with \eqref{reciprocity}
imply that  $ (P_k(\lambda^j): \Delta_k(\lambda^i) ) = 1 $ for
$ j = i $ or $ j = i+1$ and otherwise $ (P_k(\lambda^j): \Delta_k(\lambda^i) ) = 0 $. 
Therefore there are two $\Delta$-factors in the $ \Delta$-filtration for $P_k(\lambda^j) $, namely $\Delta_k(\lambda^j) $
and $\Delta_k(\lambda^{j-1}) $. 
On the other hand, defining $ Q_k(\lambda) = {\rm ker}(P_k(\lambda) \rightarrow L_k(\lambda) )$
we get from \eqref{84} and \eqref{85} that $ \dim  \Hom_{\ParAlg(n)}(Q_k(\lambda^j), L_k(\lambda^i)) = 1 $ 
if $ i = j-1 $ or $ i = j+1$ and otherwise $ \dim  \Hom_{\ParAlg(n)}(Q_k(\lambda^j), L_k(\lambda^i)) = 0 $.
Hence the Loewy structure for $ P_k(\lambda^j) $ must be as indicated in $   { \bf a)} $.

To prove $   { \bf b)} $ we once again use
Theorem \ref{importantThmPaul} and \eqref{reciprocity}, but this time we find that $ \Delta_k(\lambda^1) $
is the only $ \Delta $-factor of $P_k(\lambda^1) $, which shows $   { \bf b)} $.

Finally, to show $   { \bf c)}$ we first note that
$   { \bf b)} $ of 
Theorem \ref{importantThmPaul} gives
$ \Delta_k(\lambda^p) = L_k(\lambda^p) $. Since $ P_k(\lambda^p) $ has $ \Delta$-factors
$  \Delta_k(\lambda^p) $ and $  \Delta_k(\lambda^{p-1}) $, as one sees from
Theorem \ref{importantThmPaul} and \eqref{reciprocity}, the structure of $ P_k(\lambda^p) $
must be the one indicated in $   { \bf c)}$. This proves the Theorem. 
\end{dem}

\medskip
We now get the following Theorem, describing the indecomposable tilting modules for $ \ParAlg(n)$.

\begin{theorem}\label{maintilting} The tilting module $ T_k(\lambda^i) $ for $ i=1,2,\ldots, p$ are given by the following.
\phantomsection\label{mainThmLoewyTilting}
\begin{description}
\item[a)] If $ j = 1, 2, \ldots, p-1$ then $  T_k(\lambda^{j}) = P_k(\lambda^{j+1}) $.
\item[b)]  $  T_k(\lambda^p) = \Delta_k(\lambda^p) $.
\end{description}
  \end{theorem}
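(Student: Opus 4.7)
The plan is to apply the uniqueness characterization of indecomposable tilting modules in a highest weight category: $ T_k(\lambda) $ is the unique indecomposable module in $ \mathcal{F}_k(\Delta) \cap \mathcal{F}_k(\nabla) $ with $ (T_k(\lambda):\Delta_k(\lambda)) = 1 $ and every other $ \Delta$-subfactor $ \Delta_k(\mu) $ satisfying $ \mu \lhd \lambda $. Part $ {\bf b)} $ is immediate: by the maximality of the chain and Theorem \ref{importantThmPaul}$ {\bf b)}$, $ \Delta_k(\lambda^p) = L_k(\lambda^p) $, which is self-dual, so $ \nabla_k(\lambda^p) = \Delta_k(\lambda^p) $ and this module trivially lies in $ \mathcal{F}_k(\Delta) \cap \mathcal{F}_k(\nabla) $; being indecomposable with highest weight $ \lambda^p $, it must coincide with $ T_k(\lambda^p) $.

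For part $ {\bf a)} $, fix $ j \in \{1, \ldots, p-1\} $ and set $ P = P_k(\lambda^{j+1}) $. Brauer-Humphreys reciprocity \eqref{reciprocity} together with Theorem \ref{importantThmPaul}$ {\bf b)}$ show that the $ \Delta$-filtration of $ P $ has $ \Delta_k(\lambda^{j+1}) $ as top quotient and $ \Delta_k(\lambda^j) $ as submodule, each of multiplicity one. The Young-diagram inclusions $ \lambda^1 \subset \lambda^2 \subset \cdots \subset \lambda^p $ translate to $ \lambda^1 \rhd \lambda^2 \rhd \cdots \rhd \lambda^p $ in $ \Lambda^k $, so the highest-weight condition for $ \lambda^j $ is verified. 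The remaining task is to exhibit a $ \nabla$-filtration of $ P $, for which we exploit the palindromic Loewy structure from Theorem \ref{mainThmLoewy}. Assuming first $ j \le p-2 $, define $ K \subseteq P $ as the kernel of the composition $ \Rad(P) \twoheadrightarrow L_k(\lambda^j) $ obtained by projecting the middle Loewy layer $ L_k(\lambda^j) \oplus L_k(\lambda^{j+2}) $ onto its first summand. Then $ K $ has two composition factors, $ L_k(\lambda^{j+2}) $ sitting as its head and $ L_k(\lambda^{j+1}) $ sitting as its socle (inherited from the simple socle of $ P $), yielding a short exact sequence $ 0 \to L_k(\lambda^{j+1}) \to K \to L_k(\lambda^{j+2}) \to 0 $ that is nonsplit because $ P $ has simple socle. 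As $ (\lambda^{j+1}, \lambda^{j+2}) $ is an $ n$-pair, \eqref{85} gives $ \dim \Ext^1(L_k(\lambda^{j+2}), L_k(\lambda^{j+1})) = 1 $, forcing $ K \cong \nabla_k(\lambda^{j+1}) $. A parallel argument invoking the simple head of $ P $ and the one-dimensionality of $ \Ext^1(L_k(\lambda^{j+1}), L_k(\lambda^j)) $ from \eqref{85} yields $ P/K \cong \nabla_k(\lambda^j) $. The edge case $ j = p-1 $ proceeds in the same spirit, taking $ K $ to be the socle $ L_k(\lambda^p) = \nabla_k(\lambda^p) $ and checking $ P/K \cong \nabla_k(\lambda^{p-1}) $.

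In all cases $ P \in \mathcal{F}_k(\nabla) $, so $ P $ is tilting, and combined with the highest-weight condition, uniqueness gives $ T_k(\lambda^j) = P_k(\lambda^{j+1}) $. The main obstacle is the isomorphism $ K \cong \nabla_k(\lambda^{j+1}) $, which rests on the one-dimensionality of the relevant $ \Ext^1 $-group between simples associated to consecutive partitions in the chain; this one-dimensionality, coming from the $ n$-pair structure via \eqref{84}--\eqref{85}, together with uniqueness of nonsplit extensions, pins down $ K $ to be exactly the costandard module.
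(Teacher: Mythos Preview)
Your proof is correct, but it takes a longer path than the paper's. The paper simply observes that the Loewy structures of $P_k(\lambda^{j+1})$ computed in Theorem \ref{mainThmLoewy} are palindromic, so these modules are self-dual: indeed, $P_k(\lambda^{j+1})^\ast$ has simple head $L_k(\lambda^{j+1})$ (dual to the simple socle of $P_k(\lambda^{j+1})$), hence is a quotient of $P_k(\lambda^{j+1})$ of the same dimension, hence isomorphic to it. Self-duality together with $P_k(\lambda^{j+1}) \in \mathcal{F}_k(\Delta)$ immediately gives $P_k(\lambda^{j+1}) \in \mathcal{F}_k(\nabla)$, and the tilting conclusion follows in one line.

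Your approach instead builds the $\nabla$-filtration by hand: you isolate a submodule $K$ using the Loewy layers and identify both $K$ and $P/K$ as costandard modules via the one-dimensionality of the relevant $\Ext^1$-groups from \eqref{84}--\eqref{85}. This is perfectly valid and in fact makes the $\nabla$-filtration completely explicit, which the self-duality argument does not. The trade-off is length and the case split $j \le p-2$ versus $j = p-1$, which the paper avoids entirely. Your argument that $K$ and $P/K$ are nonsplit (using simple socle and simple head of $P$ respectively) is the key step, and it is correctly justified.
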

\begin{dem}
  The modules in $   { \bf a)}$ are described in
  $   { \bf a)}$ and $   { \bf c)}$ of Theorem \ref{mainThmLoewy}. They are self-dual
  and therefore tilting modules. The missing tilting module is $  T_k(\lambda^p) = \Delta_k(\lambda^p) $,
  given in $   { \bf b)}$.
\end{dem}

\medskip
We finally mention that 
there are versions of Theorem \ref{mainThmLoewy} and Theorem \ref{maintilting} 
for $ \SpheAlg(n) $ instead of $ \ParAlg(n) $.  In view of Theorem \ref{mainThm}
the statements and proofs are here 
exactly the same as for Theorem \ref{mainThmLoewy} and Theorem \ref{maintilting}.


\begin{thebibliography}{X}



\bibitem{AicardiJuyumaya}F. Aicardi, J. Juyumaya, 
{\it Markov trace on the algebra of braids and ties}, 
Mosc. Math. J. {\bf 16}, No. 3, (2016), 397-431. 


\bibitem{ArcisEspinoza}D. Arcis, J. Espinoza, 
{\it Tied-boxed algebras}, arXiv:2312.04844. 


  

  
\bibitem{F. C. Auluck}F. C. Auluck, 
\textit{On partitions of bipartite numbers}, Proc. Cambridge Philos. Soc. {\bf 49}, (1953), 72-83.


\bibitem{Aitken}A. C. Aitken, 
  \textit{On induced permutation matrices and the symmetric group},
  Proc. Edinburgh Math. Soc. {\bf 5}, No. 1, (1936), 1-13.


  

\bibitem{Banjo}
E. Banjo, {\it 
The generic representation theory of the Juyumaya algebra of braids and ties}, 
Algebr. Represent. Theory {\bf 16}, No. 5, (2013), 1385-1395. 


{\color{black}
\bibitem{BH}G. Benkart, T. Halverson, \textit{Partition algebras $ {\mathsf P}_k(n) $ with $2k > n $ and the
fundamental theorems of invariant theory for the symmetric group $ {\mathsf S}_n $}, J. Lond. Math.
Soc. {\bf 99}(2), (2019), 194–224.}



\bibitem{BHH}G. Benkart, T. Halverson, N. Harman, \textit{Dimensions of irreducible modules for partition algebras and tensor power multiplicities for symmetric and alternating groups},
  J. Algebr. Comb. {\bf 46}, (2017), 77–108.



  

\bibitem{BDM}C. Bowman, S. Doty, S. Martin, \textit{Integral Schur-Weyl duality for partition algebras}, 
Algebr. Comb. {\bf 5}, No. 2, (2022), 371-399.


\bibitem{BdVK}C. Bowman, M. De Visscher, O. King, 
\textit{The blocks of the partition algebra in positive characteristic}, 
Algebr. Represent. Theory {\bf 18}, No. 5, (2015), 1357-1388. 


\bibitem{BOdV}C. Bowman, M. De Visscher, R. Orellana, \textit{The partition algebra and the Kronecker coefficients}, 
Trans. Amer. Math. Soc. {\bf 367} (2015), no. 5, 3647–3667.



{\color{black}{
  \bibitem{Campbell}J. M. Campbell, \textit{Alternating submodules for partition algebras, rook
    algebras, and rook-Brauer algebras}, J. Pure Appl. Algebra {\bf 228}, No. 1 (2024),
107452.}}




\bibitem{CST}T. Ceccherini-Silberstein, F. Scarabotti, F. Tolli,
  \textit{Representation {\it {\color{black}{t}}}heory of the {\it {\color{black}{s}}}ymmetric
    {\it {\color{black}{g}}}roups: {\it {\color{black}{t}}}he Okounkov–Vershik {\it {\color{black}{a}}}pproach, {\it {\color{black}{c}}}haracter {\it {\color{black}{f}}}ormulas, and the
  {\it {\color{black}{p}}}artition {\it {\color{black}{a}}}lgebras},
  Cambridge Studies in Advanced Mathematics, {\bf 121}, (2010).

\bibitem{CO}J. Comes, V. Ostrik, \textit{On blocks of Deligne’s category $ \underline{\rm Rep}(S_t) $}, 
Adv. Math. {\bf 226}, No. 2, (2011), 1331-1377.
  
  



  

\bibitem{OrellanaDaugherty}Z. Daugherty, R. Orellana,  \textit{The quasi-partition algebra}, 
J. Algebra {\bf 404}, (2014), 124-151.


  

\bibitem{Donkin}S. Donkin, \textit{The $q$-Schur algebra}, 
London Mathematical Society Lecture Note Series {\bf 253}, Cambridge: Cambridge University Press. x, (1998), 179 pages. 


\bibitem{Donkin2}S. Donkin,
{\it Cellularity of endomorphism algebras of Young permutation modules}, 
J. Algebra {\bf 572}, (2021), 36-59.




\bibitem{Donkin3}S. Donkin,
{\it Double centralisers and annihilator ideals of Young permutation modules}, 
J. Algebra {\bf 591}, (2022), 249-288.



\bibitem{DW}W. F Doran IV, D. B. Wales, \textit{The {\it {\color{black}{p}}}artition
  {\it {\color{black}{a}}}lgebra {\it {\color{black}{r}}}evisited},
  Journal of Algebra {\bf 231}, (2000), 265-330.
  

\bibitem{ERH}J. Espinoza, S. Ryom-Hansen,
{\it Cell structures for the Yokonuma-Hecke algebra and the algebra of braids and ties}, 
J. Pure Appl. Algebra {\bf 222}, No. 11, (2018), 3675-3720. 

  
\bibitem{HR}T. Halverson, A. Ram,
\textit{Partition algebras}, 
Eur. J. Comb. {\bf 26}, No. 6, (2005), 869-921.
  

 \bibitem{Jones}V. F. R. Jones,
\textit{The Potts model and the symmetric group}, in Subfactors: Proceedings of the
Taniguchi { {\color{black}{s}}}ymposium on {{\color{black}{o}}}perator
{{\color{black}{a}}}lgebras, (Kyuzeso, 1993), World Sci. Publishing, River
Edge, NJ, (1994), 259–267.





\bibitem{GL}J. J. Graham, G. I. Lehrer,
  \textit{Cellular algebras}, Inventiones Mathematicae {\bf 123}, (1996), 1-34.

\bibitem{GG}A. M. Garsia, I. Gessel, {\it Permutation statistics and partitions}, Adv. in Math. {\bf 31}(3),
  (1979), 288–305.


\bibitem{Ha}M. Haiman, 
{\it Combinatorics, symmetric functions, and Hilbert schemes}, 
Jerison, David (ed.) et al., Current developments in mathematics, 2002.
Proceedings of the joint seminar by MIT and Harvard, Cambridge, MA, 2002.
Somerville, MA: International Press, 39-111, (2003). 
  

\bibitem{Harman}N. Harman, \textit{Representations of monomial matrices and restriction from $GL_n$ to $S_n$}, arXiv:1804.04702.



  
\bibitem{James}G. D. James, {\it The representation
  {\it {\color{black}{t}}}heory of the {\it {\color{black}{s}}}ymmetric
  {\it {\color{black}{g}}}roups},
  Lecture Notes in Mathematics {\bf 682}, Edited by A. Dold and B. Eckman,
  Springer Verlag, Berlin Heidelberg New York, 1978. 


  
\bibitem{KoXi}S. K\"onig, C. C. Xi, {\it On the structure of cellular algebras},  Proceedings of ICRA VIII, Canadian
Mathematical Society Conference Proceedings {\bf 24} (Canadian Mathematical Society, Ottawa,
1998), 365–385.


\bibitem{KoXi2}S. K\"onig, C. C. Xi, {\it
When is a cellular algebra quasi-hereditary?}, Mathematische Annalen {\bf 315}, (1999), 281-293.
 





\bibitem{Lang}S. Lang, {\it Algebra}, Graduate Texts in Mathematics {\bf 211},
Springer-Verlag New York, 3rd ed., (2002), 934 pages. 



\bibitem{MacMahon}P. A. Macmahon, 
\textit{Combinatory analysis}, Vol. {\bf 1}. Cambridge Univ. Press, Cambridge, England, (1916).



\bibitem{Macdonald}I. G. Macdonald, 
\textit{Symmetric {\it {\color{black}{f}}}unctions and Hall {\it {\color{black}{p}}}olynomials},
Oxford University Press, second edition, (1995), 475 pages. 



\bibitem{PMartin}P. Martin, 
  \textit{Potts models and related problems in statistical mechanics},
  Series on Advances in Statistical Mechanics, 5. Singapore etc.: World Scientific. xiii, 
  (1991), 344 pages.

\bibitem{PMartin1}P. Martin, 
  \textit{The {\it {\color{black}{s}}}tructure of the {\it {\color{black}{p}}}artition
    {\it {\color{black}{a}}}lgebras}, 
  Journal of Algebra {\bf 183}, (1996), 319-358.

\bibitem{PMartin2}P. Martin, 
{\it The partition algebra and the Potts model transfer
matrix spectrum in high dimensions}, 
J. Phys. A: Math. Gen. {\bf 33}, 3669, (2000). 


  

\bibitem{PMartinRollet}P. Martin, G. Rollet, {\it The Potts model representation and a Robinson-Schensted
  correspondence for the partition algebra}, Compositio Math. {\bf 112}, (1998), 237–254. 


 \bibitem{MartinWoodcock}
P. Martin, D. Woodcock, {\it
The partition algebras and a new deformation of the Schur algebras}, 
J. Algebra {\bf 203}, No. 1, (1998), 91-124. 



{\color{black}{
 \bibitem{MartinWoodcock2}
P. Martin, D. Woodcock, {\it
  On {\it {\color{black}{c}}}entral {\it {\color{black}{i}}}dempotents in
  the {\it {\color{black}{p}}}artition {\it {\color{black}{a}}}lgebra}, 
J. Algebra {\bf 217}, No. 1, (1999), 156-169. }}




  

\bibitem{Mathas}A. Mathas, \textit{Hecke algebras and Schur algebras of the symmetric group}, Univ. Lecture Notes {\bf 15},
A.M.S., Providence, R.I., 1999.

  
\bibitem{Murphy}G. E. Murphy, \textit{The {\it {\color{black}{r}}}epresentations of
  Hecke {\it {\color{black}{a}}}lgebras of type $A_n$},
  Journal of Algebra {\bf 173}, (1995), 97-121.


\bibitem{NaPaulSriva}S. Narayanan, D. Paul, S. Srivastava, \textit{The multiset partition algebra}, 
Isr. J. Math. {\bf 255}, No. 1, (2023), 453-500. 

  
\bibitem{OrellanaZabrocki}R. Orellana, M. Zabrocki, 
{\it Symmetric group characters as symmetric functions},
Adv. Math. {\bf 390}, (2021), 34 pages. 

\bibitem{OrellanaZabrocki2}
R. Orellana, M. Zabrocki, 
{\it Howe duality of the symmetric group and a multiset partition algebra}, 
Commun. Algebra {\bf 51}, No. 1, (2023), 393-413. 



\bibitem{Rouquier}R. Rouquier, 
{\it Representations of rational Cherednik algebras}, 
Berman, Stephen (ed.) et al., Infinite-dimensional aspects of representation theory and applications. International conference on infinite-dimensional aspects of representation theory and applications,
Charlottesville, VA, USA, May 18–22, 2004. Providence, RI: American Mathematical Society (AMS).
Contemp. Math. {\bf 392}, (2005), 103-131. 




\bibitem{Schur}I. Schur, \textit{ \"Uber die rationalen Darstellungen der allgeimeinen linearen Gruppe}, (1927),
Gesammelte Abhandlungen, Band III (German), herausgegeben von Alfred Brauer und
Hans Rohrbach, Springer-Verlag, Berlin/New York, 1973. 


\bibitem{Scrimshaw}T. Scrimshaw, {\it Cellular subalgebras of the partition algebra}, 
  J. Comb. Algebra,  (2023), published online first. 


  
\bibitem{Stanley}R. Stanley, \textit{Enumerate {\it {\color{black}{c}}}ombinatorics, Volume 2},  
Cambridge University Press, (1999), pages 1-585.


\bibitem{RH1}S. Ryom-Hansen, \textit{On the representation theory of an algebra of braids and ties}, 
J. Algebr. Comb. {\bf 33}, No. 1, (2011), 57-79. 

\bibitem{RH2}S. Ryom-Hansen,
\textit{On the annihilator ideal in the bt-algebra of tensor space}, 
J. Pure Appl. Algebra {\bf 226}, No. 8, (2000), 24 pages. 
  


  
\bibitem{Thibon}J-Y. Thibon, 
\textit{The inner plethysm of symmetric functions and some of its applications}, 
Bayreuther Mathematische Schriften {\bf 40}, (1992), 177-201. 



\bibitem{Wilson}A. Wilson, \textit{A diagram-like basis for the multiset partition algebra}, arXiv:2307.01353. 


\bibitem{Weyl}H. Weyl, \textit{The classical groups, their invariants and representations}, 
  Princeton Mathematical Series. No. {\bf 1}. Princeton: Princeton University Press, London, Humphrey Milford, Oxford:
  University Press. xii, (1939), 302 pages. 


  
  
\bibitem{Xi}C. Xi, \textit{ Partition algebras are cellular}, Compositio Math. {\bf 119}, (1999), 99–109.





\end{thebibliography}
 \end{document}